\numberwithin{equation}{section}
\newcommand{\dom}{\operatorname{dom}}
\newcommand{\CinfD}{C^{\infty}_{\mathrm {D}}}
\newcommand{\Cinftr}{C^{\infty,\mathrm{tr}}}  % triangular prop.
\newcommand{\CinfDtr}{\Cinftr_{\mathrm{D}}}  % triangular prop. and Dir
\newcommand{\dDir}{\partial_{\mathrm{D}}}  % Dirichlet boundary
\newcommand{\LP}{\operatorname{LP}}  % Leading part
\renewcommand{\Omegabar}{\overline{\Omega}}
\def\bupr{.4}   %% radius of blow-up circle in figures
\def\pcaxis{.2} % length or arrows of proj. coord systems
\newtheorem*{inductivesteplemma}{Inductive step lemma}
\newtheorem*{leading part and model operator lemma}{Leading part and model operator lemma}
\newenvironment{indlist}
{
\begin{list}{}{\setlength{\itemsep}{2mm}}
}
{ 
\end{list}
 }
\begin{document}
 \title%[Manifolds with corners and quasimodes]
 {Scales, blow-up and quasimode constructions}
 \author{Daniel Grieser}
 \date{\today}

% keywords taken out since they don't work with article document class

% \keywords{manifolds with corners, blow-up, resolution, multiple scales, matched asymptotic expansions,  eigenvalues, spectrum, quasimodes, adiabatic limit, fat graphs}

% MSC
%35-XX 			Partial differential equations:
%35-02   	Research exposition (monographs, survey articles)
%35B25   	Singular perturbations
%35P05   	General topics in linear spectral theory
%
%58-XX 			Global analysis, analysis on manifolds:
%58-02   	Research exposition (monographs, survey articles)
%58J37   	Perturbations; asymptotics
%58J50   	Spectral problems; spectral geometry; scattering theory
%
%Primary: 35-02
%Secondary:
%35B25, 35P05, 58J37

 \maketitle

\begin{abstract}
In this expository article we show how the  concepts of manifolds with corners, blow-ups and resolutions can be used effectively for the construction of quasimodes, i.e.\ of approximate eigenfunctions of the Laplacian on certain families of spaces, mostly exemplified by domains $\Omega_h\subset\R^2$, that degenerate as $h\to0$. These include standard adiabatic limit families and also families that exhibit several types of scaling behavior.  
An introduction to manifolds with corners and resolutions, and how they relate to the ideas of (multiple) scales and matching, is included. 
\end{abstract}

\setcounter{tocdepth}{2}
 \tableofcontents
 
\section{Introduction}
This article gives an introduction to the ideas of blow-up and resolution, and how they can be used for the construction of quasimodes for the Laplacian in singular perturbation problems. Blow-up is a rigorous geometric tool for describing multiple scales, which appear in many analytic problems in pure and applied mathematics. 
The construction of quasimodes is a low-tech yet non-trivial problem where this tool can be used effectively.

\smallskip

\noindent{\em The idea of scales.} One of the fundamental ideas in analysis is {scale.} As an illustration consider the function
\begin{equation}
 \label{eqn:fh def}
 f_h(x) = \frac x{x+h}\,,\quad x\in [0,1]
\end{equation}
where $h$ is a \lq small' positive number, see Figure \ref{fig:fh graph} for $h=0.1$ and $h=0.01$. Observe that at $x=0$ the function takes the value 0 while for \lq most' values of $x$ it is \lq close' to 1. On the other hand, taking $x=h$ we get $f_h(h)=\frac12$, and more generally if $x$ is \lq on the order of $h$' then $f_h(x)$ will be somewhere \lq definitely between 0 and 1'.

This may be the way a physicist describes the function $f_h$, even without the quotation marks; to a mathematician 
the quotes create a sense of uneasiness, so we search for a precise statement. We then realize that we are really talking about the {\em family} of functions $(f_h)_{h>0}$ and its limiting behavior as $h\to0$.  
More precisely, we first have the pointwise limit
\begin{equation}
 \label{eqn:fh pointwise}
 \lim_{h\to0}f_h(x)  = f_0(x):=
\begin{cases}
 0&\text{ if }x=0\\
 1&\text{ if }x>0.
\end{cases}
\end{equation}
On the other hand, we have the {\em rescaled limit} where we set $x=hX$ and fix $X$ while letting $h\to0$:
\begin{equation}
\label{eqn:fh rescaled}
\lim_{h\to0} f_h(hX) = g(X) := \frac{X}{X+1}\,,\quad X\geq0\,.
\end{equation}
The function $g$ shows how the transition from the value 0 to almost 1 happens in $f_h$. We call this the limit of $f_h$ at the scale $x\sim h$, while \eqref{eqn:fh pointwise} is the limit at the scale $x\sim 1$. 
We could also consider other scales, i.e. limits $\lim_{h\to0} f_h(h^aX)$ with $a\in\R$, but they don't give new insights in this case: if $a<1$ then we get the jump function $f_0$ while for $a>1$ we just get zero.

Summarizing, we see that the family $(f_h)$ has non-trivial behavior at two scales, $x\sim1$ and $x\sim h$, for $h\to0$. 
\begin{figure}
\def\dash{.05}  % half length of dash to indicate position on axis
\begin{tikzpicture}[scale=2]
  \pgfgettransform\mytrafo  % trick to make scaling work with matrix, so text is not scaled; also see third argument in matrix command; see http://tex.stackexchange.com/questions/149712/scaling-inside-a-tikz-matrix-does-not-work
    
\matrix[row sep=-1cm, column sep=3cm,execute at begin cell=\pgfsettransform\mytrafo]
{
&
\coordinate (limit1) at (-.2,0.2); 
 \draw[->] (0,0) -- (1.2,0) node[right]{$x$};
 \draw[->] (0,0) -- (0,1.2);
 \draw (1,-\dash) node[below]{1} -- (1,\dash);
 \draw (-\dash,1) node[left]{1} -- (\dash,1);
 
 \draw (0,1) -- (1,1) node[above]{$f_0$};
\\ 
\coordinate (fh1) at (1.5,.7);
\coordinate (fh2) at (1.5,.3);

 \draw[->] (0,0) -- (1.2,0) node[right]{$x$};
 \draw[->] (0,0) -- (0,1.2);
 \draw (1,-\dash) node[below]{1} -- (1,\dash);
 \draw (-\dash,1) node[left]{1} -- (\dash,1);
 %\draw[dashed] (0,1) -- (1,1); 

\node[above] at (1,1){$f_h$};

\foreach  \h in {.1, .01}
{
 \draw[smooth,samples=100,domain=0:1] plot( {\x},{\x/(\x+\h)} );
% \draw (\h,-\dash) node[below]{\h} -- (\h,\dash);
};
\\
&
\coordinate (limit2) at (-.2,.8);
 \draw[->] (0,0) -- (2,0) node[right]{$X$};
 \draw[->] (0,0) -- (0,1.2);
 \draw (-\dash,1) node[left]{1} -- (\dash,1);
 
\node[above] at (1.5,1){$g$};
 
  \draw[smooth,samples=100,domain=0:1.5] plot( {\x},{\x/(\x+.1)} );
  \draw[dashed,samples=100,domain=1.5:2] plot( {\x},{\x/(\x+.1)} );

 \\
}; % end matrix
\draw[->] (fh1) -- node[above,sloped]{$h\to0$}(limit1);
\draw[->] (fh2) -- node[above,sloped]{$h\to0$} node[below,sloped]{\small rescaled $x=hX$}(limit2);
\end{tikzpicture}
 \caption{Graph of  $f_h$ for $h=0.1$ and $h=0.01$, and limits at two scales}
 \label{fig:fh graph}
\end{figure}
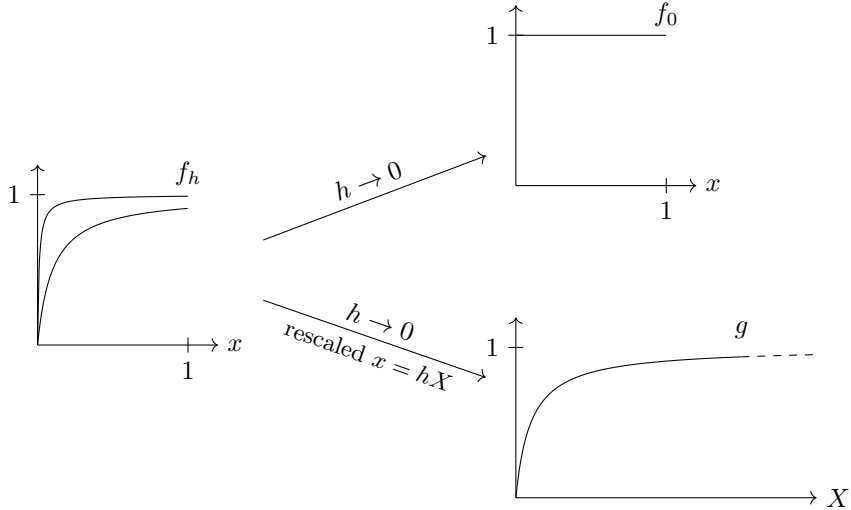

\medskip

\noindent{\em Geometric resolution analysis and matched asymptotic expansions.}
This rough first explanation of scales will be made more precise in Section \ref{sec:intro mwc}. But let us now turn to real problems: Consider a differential equation whose coefficients depend on a parameter $h$, and have non-trivial behavior at several scales as $h\to0$. We then ask how the solutions behave as $h\to0$. Of course we expect them to exhibit several scales also.\footnote{Although it is not essential for this article, as a warm-up exercise you may analyze the behavior as $h\to0$ of the solution of the differential equation $u'+f_h(x)u=0$, $u(0)=1$, or (more difficult) of $u''+f_h(x)u=0$, $u(0)=0,u'(0)=1$.}
The same phenomenon arises in so-called singular perturbation problems, where the type of the equation changes at $h=0$.\footnote{As an example, consider the equation $hu'+u=0$, $u(0)=1$. At $h=0$ this is not even a differential equation! For $h>0$ it has the solution $u_h(x)=e^{-x/h}$, which exhibits scaling behavior as $h\to0$ similar to $f_h$.}
Similarly, we could think of a partial differential equation on a domain which depends on $h$ and has parts that scale in different ways, or which degenerates to a lower-dimensional domain as $h\to0$, or both.
For examples see Figures \ref{fig:adiab var domain} and \ref{fig:example OmegaL}.
Such problems arise frequently in global and geometric analysis as well as in applied analysis (sometimes under the name of boundary layer problems). We will call them {\em singular problems.}

There is a standard method to attack such problems, called {\em matched asymptotic expansions} (MAE) and commonly used in applied analysis since the mid-1900s (see e.g. \cite{Hol:IPM}): 
Roughly speaking, for each scale appearing in the problem you make an ansatz for the Taylor expansion (in $h$) of the solution at this scale and plug it into the equation. This yields  recursive sets of equations for the Taylor coefficients. The fact that the solutions at different scales must \lq fit together' yields boundary conditions that make these equations well-posed (and often explicitly solvable). 

Of more recent origin is a different but closely related method, which has been used frequently in global and geometric analysis and which we call {\em geometric resolution analysis}\footnote{As far as I know, no name has been coined for the method in the literature. This name must not be confused with the so-called geometric multi-resolution analysis, a method for the analysis of high dimensional data.} (GRA):
the starting point is a shift in perspective, which in the example above is to consider $f:(x,h)\mapsto f_h(x)$ as a function of two variables rather than as a family of functions of one variable.  Then $f$ has singular behavior at $(x,h)=(0,0)$, and the scaling considerations above can be restated as saying that this singularity can be {\em resolved by blowing up} the point $(0,0)$ in $(x,h)$-space, as will be explained in Section \ref{sec:intro mwc}. In order to analyze the solutions of a singular differential equation we first resolve its singularities by suitably blowing up $(x,h)$-space; then the asymptotic behavior of solutions is obtained by solving model problems at the $h=0$ boundary faces of the blown-up space. The model problems are simpler than the original problem and correspond to the recursive sets of equations of MAE.

\medskip

\noindent{\em Eigenfunctions and quasimodes.} The purpose of this article is to introduce the concepts needed for geometric resolution analysis and apply them to problems in spectral theory. The needed concepts are manifolds with corners, blow-up and resolution. The spectral problem is to analyze solutions $\lambda\in\R$, $u:\Omega\to\R$ of the equation
$$ -\Delta u = \lambda u$$
where $\Delta$ is the Laplacian on a bounded domain $\Omega\subset\R^2$, and the Dirichlet boundary condition $u=0$ at $\partial\Omega$ is imposed. This problem has natural generalizations to higher dimensions, manifolds and other boundary conditions, some of which will occasionally also be considered. The eigenvalues form a sequence $0<\lambda_1\leq\lambda_2\leq\dots\to\infty$ and can usually not be calculated explicitly. But if we look at families of domains $\Omega_h$ which degenerate to a line segment as $h\to0$ then we have a chance to analyze the asymptotic behavior of $\lambda_k(h)$ (and associated eigenfunctions) as $h\to0$. Here we fix $k$ while letting $h\to0$. Other regimes are also interesting, e.g. $k$ going to $\infty$ like $h^{-1}$, but we don't consider them here. One expects that the leading term in the asymptotics can be calculated by solving a one-dimensional (ODE) problem. This is indeed the case also for higher order terms, but the details of how this works depend crucially on {\em how} $\Omega_h$ degenerates (the \lq shape' of $\Omega_h$). 
We will analyze several interesting cases of such degenerations. 

A standard approach to analyzing such eigenvalue problems is to first construct so-called {\em quasimodes}, i.e.\ pairs $(\lambda,u)$ which solve the eigenvalue equation up to a small, i.e. $O(h^N)$, error, and then to show that the quasimodes are close to actual solutions.
The construction yields the full asymptotics (i.e. up to errors $O(h^N)$ for any $N$) of quasimodes, and then of actual eigenvalues and eigenfunctions as $h\to0$.
It is in the construction of quasimodes where GRA (or MAE) is used, and we will focus on this step in this article. The second step is quite straight-forward if the operator is scalar and the limit problem is one-dimensional, as is the case for all problems considered here. See Remark \ref{rem:first vertical mode},  \cite{FriSol:SDLNS}, \cite{GriJer:AEPD}, \cite{Our:DEAFT} and point (V) below. For higher dimensional limit problems quasimodes need not be close to modes, see \cite{Arn:MQ}.

\medskip

\noindent{\em Why GRA?}
The methods of geometric resolution analysis and matched asymptotic expansions are closely related: they are really different ways to encode the same calculational base. GRA requires you to learn and get used to some new concepts, like manifolds with corners and blow-up, while MAE is very \lq down-to-earth'. Here are some points why it may be worth to invest the effort to learn about GRA. I hope they will become clear while you read this article.
\begin{enumerate}[(I)]
 \item
 GRA provides a rigorous framework for the powerful idea of MAE. For example, the \lq expansions at different scales' of a putative solution $u(x,h)$ are simply Taylor expansions at different faces of $u$ when considered on (i.e. pulled back to) the blown-up space.
 \item
 GRA provides conceptual clarity. In GRA the \lq singular' aspects of a problem are dealt with in the geometric operation of blow-up. Then the analysis (solution of differential equations) is reduced to non-singular model problems, and to a version of the standard Borel lemma. 
In this way essential structures of a problem are clearly visible, while notationally messy (but essentially trivial) calculations involving multiple Taylor series  
run invisibly in the background.
This also helps to identify common features of seemingly different problems.
 \item
 GRA helps to stay sane in complex settings. Often more than two model problems appear, and remembering how they fit together (the \lq matching conditions' of MAE) may be a torturous task. In GRA each model problem corresponds to a boundary face of the resolved space, and their relations can be read off from how these faces intersect. 
 \item
 GRA may guide the intuition. 
 The true art in solving singular problems is to identify the scales that can be expected to appear in the solutions.  
 The geometric way of thinking about singularities often helps to \lq see' how to proceed, see Section \ref{sec:adiabatic with ends} for a nice example. An added complication is that sometimes solutions exhibit more scales than the data (i.e.\ the coefficients or the  domains), as the setting in Section \ref{sec:adiab variable} shows. It is desirable to have systematic methods to find these. These are beyond the scope of this article however, and we refer to \cite{Mel:DAMWC}.
 \item
GRA can be refined to provide
a systematic way to extend modern PDE methods like the pseudodifferential calculus to singular problems, and embeds them in a larger mathematical framework, see \cite{Mel:APSIT}, \cite{Mel:DAMWC}, \cite{Gri:BBC}. In this way one may also carry out the second step mentioned above, proving that quasimodes are close to actual solutions, in the framework of GRA, by analyzing the resolvent on a blown-up double space, see e.g. \cite{MazMel:ALHCLSSF}.
\end{enumerate}

In this article we 
use simple examples to explain structures which also arise in more elaborate contexts. We consider planar domains and the scalar Laplacian, but the methods generalize without much extra work to manifolds and systems of elliptic PDEs (for example the Hodge Laplacian on differential forms). This is indicated at the end of each section. 
The methods can also be extended to study many other types of singular degenerations (with more work!), for example families of triangles degenerating to a line (ongoing work with R. Melrose, see also \cite{DauRay:PWCSAL}, \cite{Our:DEAFT}), domains from which a small ball is removed etc.

The results presented here are not new, and in some cases more precise or more general results have been obtained by other methods, as is indicated in the subsections on generalizations. 
In the PDE literature blow-up methods have mostly been used in the context of microlocal analysis. Our purpose here is to illustrate their use on a more elementary level, and to introduce a systematic setup for applying them to quasimode constructions. 
A minor novelty seems to be the use of the quasimode and remainder spaces $\calE(M), \calR(M)$ and their associated leading part maps, see Section \ref{sec:main steps} and Definitions \ref{def:adiab limit rem space}, \ref{def:adiab ends qm space} and \ref{def:adiab ends rem space}, although it is reminiscent of and motivated by the rescaled bundles used for example in  \cite{MazMel:ALHCLSSF}.

\subsection*{Outline of the paper}
In Section \ref{sec:intro mwc} we introduce the main objects of geometric resolution analysis (manifolds with corners, blow-up and resolution) and explain how they relate to the idea of scales. 
If you are mostly interested in quasimode constructions it will suffice to skim this section and only use it for reference; however, for Section \ref{sec:adiabatic with ends} more of this material will be needed.
In the remaining sections we show how quasimodes can be constructed using geometric resolution analysis. 
The examples are ordered to have increasing complexity, so that later examples use ideas introduced in previous examples plus additional ones. 
For easier reading the main steps of the constructions are outlined in  Section \ref{sec:main steps}.
To set the stage, we first consider regular perturbation problems in Section \ref{sec:regular pert}.
All further problems are eigenvalue problems on families of domains $\Omega_h$ which degenerate to a line segment as $h\to0$. 
Such problems are sometimes called \lq adiabatic limit problems\rq.
The simplest setting for these, where the cross section has constant lowest eigenvalue, is considered in Section \ref{sec:adiab limit const}. 
The treatment is general enough to apply to fibre bundles with Riemannian submersion metrics. 
Variable eigenvalues of the cross section, which occur for example when $\Omega_h$ is an ellipse with half axes $1$ and $h$, will introduce new scales, and this is analyzed in Section \ref{sec:adiab variable}. Then in Section \ref{sec:adiabatic with ends} we consider a problem where $\Omega_h$ scales  differently in some parts than in others. Here it will be especially apparent how the geometric way of thinking guides us to the solution.
The quasimode results are formulated in Theorems \ref{thm:regular quasimodes}, \ref{thm:adiab quasimodes}, \ref{thm:adiabatic variable}, \ref{thm:adiab ends}.
In Section \ref{sec:summary} we summarize the main points of the various quasimode constructions.

\subsection*{Related literature}
The book \cite{Mel:DAMWC} (unfinished, available online) introduces and discusses in great generality and detail manifolds with corners and blow-ups and their use in analysis. The big picture is outlined in \cite{Mel:PDOCSL}. 
The focus in the present article is on problems depending on a parameter $h$, where singularities only appear as $h\to0$ (so-called singular perturbation problems). Closely related are problems which do not depend on a parameter but where the underlying space (or operator) is singular, and the methods of geometric resolution analysis can be and have been applied extensively in this context. A basic introduction to this is given by the author in \cite{Gri:BBC}, with applications to microlocal analysis, including many references to the literature. Other frameworks for manifolds with corners have been proposed, see for example \cite{Joy:MAC} and references there.

Blow-up methods have also been used in the context of dynamical systems, e.g. in celestial mechanics \cite{McG:SCCM}, for analyzing geodesics on singular spaces \cite{GraGri:EMCS} or in multiple time scale analysis, see for example \cite{DumRou:CCCM}, \cite{KueSzm:MGOMNCRO}, \cite{GilKruSzm:AEUB} and the book \cite{Kue:MTSD}, which gives an excellent overview and many more references.

The survey \cite{Gri:TTMPGASG} discusses various types of \lq thin tube\rq\ problems including the ones discussed here; their origin as well as various methods and results are explained.
The books \cite{MazNazPla:ATEBVPSPDI}, \cite{MazNazPla:ATEBVPSPDII} discuss many singular perturbation problems of geometric origin and their solution by a method called \lq compound asymptotic expansions\rq\ there, which is similar to matched asymptotic expansions.

More references are given at the end of each section.

\subsection*{Acknowledgements}
These notes are based on a series of lectures that I gave at the summer school \lq Geometric and Computational Spectral Theory\rq\ at the Centre de Recherches Math\'ematiques in Montreal. I am grateful to the organizers of the school for inviting me to speak and for suggesting to write lecture notes. I thank Leonard Tomczak for help with the pictures and D. Joyce, I. Shestakov, M. Dafinger and the anonymous referee for useful comments on previous versions of these notes. My biggest thanks go to Richard Melrose for introducing many of the concepts discussed here, and for many inspiring discussions.

%\subsection{Notation}
%\begin{itemize}
% \item
% $\langle\ , \ \rangle_F$ means $L^2$-scalar product on $F$, where a measure on $F$ is given by context. Similarly $\|\ \|_F$ the $L^2$-norm.
% \item
% Capitals $M,X,F,B,A$ are manifolds with corners or with boundary, usually compact.
% $\Omega_h\subset\R^2$ is an open subset.
% 
% $M$ is resolution.
%\item
%Boundary of $M$ is union of $\dDir M$ (Dirichlet boundary) and $\partial_0 M$ (boundary at $h=0$).
% \item 
% $f=f_0+hf_1+\dots$ means $f$ is smooth and $f_i$ independent of $h$.
% \item Operators are $P$ or $-\Delta$ or $-\partial_x^2$ etc., not $D_x^2$
% \item $k$ for precision of quasimode, $m$ for number of eigenvalue, $p$ order of degeneracy of maximum in var. adiab. limit
% \end{itemize}

\section{A short introduction to manifolds with corners and resolutions}
\label{sec:intro mwc}
In this section the basic concepts of geometric resolution analysis are introduced: manifolds with corners, polyhomogeneous functions, blow-up, resolutions. 
We emphasize ideas and introduce concepts mostly by example or picture (after all, we are talking geometry here!), hoping that the interested reader will be able to supply precise definitions and proofs herself, if desired. Many details can be found in \cite{Mel:APSIT} and \cite{Mel:DAMWC}.

To see where we're heading consider the example from the introduction:
$$ f(x,h) = \frac{x}{x+h}\,,\quad x,h\geq0,\ (x,h)\neq(0,0)\,.$$
Recall its $h\to0$ limits at two scales:
\begin{equation*}
f_0(x)= \lim_{h\to0}f(x,h)  = 
\begin{cases}
 0&(x=0)\\
 1&(x>0)
\end{cases}
,\qquad
g(X)= \lim_{h\to0} f(hX,h) =  \frac{X}{X+1}\,,
\end{equation*}
see Figure \ref{fig:fh graph}. The \lq geometry\rq\ (of geometric resolution analysis) resides in the spaces on which these functions are defined, i.e.\ their domains:
$$ \dom f = \R_+^2,\ \dom f_0 = \R_+,\ \dom g = [0,\infty]$$
where
$$ \R_+ := [0,\infty)\,.$$
See Figure \ref{fig:domains}.
Actually, $f$ is not defined at $(0,0)$, but we ignore this for the moment.
For $g$ we have added $\infty$ to its domain, where we set $g(\infty)=\lim_{X\to\infty}g(X)=1$. We'll see in a moment why this makes sense.

\begin{figure}
 
\centering
\begin{tikzpicture}

\begin{scope}[scale=2]

\draw[fill=gray!20,draw=white]
   (0,0) rectangle (1,1);
   
\draw[->] (0,0) -- (1,0) node[right]{$x$};
\draw[->] (0,0) -- (0,1) node[left]{$h$}; 
\draw[dashed] (30:0) -- (30:.7) node[right]{$R_2$};
\draw[dashed] (45:0) -- (45:.7);
\node at (.7,.6) {$R_1$};

\node at (.5,1.2){$\dom f$};
%\draw[fill] (0,0) circle (.5pt);
 
\end{scope}

\node at (4,1){$\stackrel{\beta}\longleftarrow$};

\begin{scope}[xshift=6cm, scale=2]
 
\draw[fill=gray!20,draw=white]
   (0,0) rectangle (1,1)
   (0,0) -- (\bupr,0) arc (0:90:\bupr) -- (0,0);
   
\draw (1,0) -- (\bupr,0) arc (0:90:\bupr) -- (0,1); 
\draw[dashed] (30:\bupr) -- (30:\bupr+.7);
\draw[dashed] (45:\bupr) -- (45:\bupr+.7);

\node at (.5,1.2){$[\dom f, (0,0)]$};

\def\y{.5}  % shift down of next part
 \draw (-.8,-\y) -- (-.4,-\y) node[below]{$\dom g$} -- (0,-\y);
 \draw[fill] (-.8,-\y) circle (.3pt);
 \draw[fill] (0,-\y) circle (.3pt);
 
 \draw (\bupr,-\y) -- (.7,-\y) node[below]{$\dom f_0$} -- (1,-\y);
 \draw[fill] (\bupr,-\y) circle (.3pt);

 \draw[dotted,->] (-.4, -\y+.1) -- (\bupr/2,\bupr/2);
 \draw[dotted,->] (.7, -\y+.1) -- (.7,-.1);
\end{scope}

\end{tikzpicture}

\caption{Domains of $f$ and of its rescaled limits $f_0$ and $g$, and how they relate to each other. Dotted arrows mean \lq identify with\rq.}
\label{fig:domains}
\end{figure}
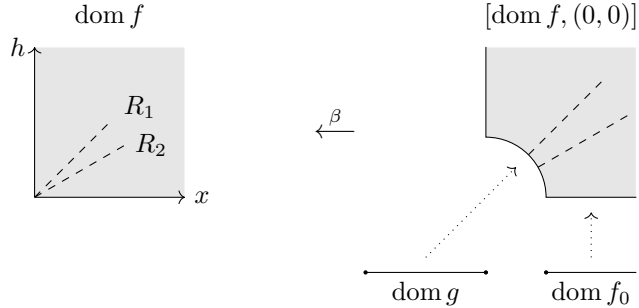
These spaces are simple examples of manifolds with corners.
We ask:
\begin{quote}
{\em Can we understand $f_0$ and $g$ as restrictions of $f$ to suitable subsets of its domain?} 
\end{quote}

For $f_0$ this is easy: if we identify $\dom f_0$ with the lower edge of
$\dom f$ then $f_0$ is simply the restriction of $f$.
(Again we should exclude the point $(0,0)$.)

How about understanding $g$ as a restriction of $f$? Can we reasonably identify $\dom g$ as a subset of $\dom f$?
This is less obvious.
%Recall that $g(X) = \lim_{h\to0} f(hX,h)$ for each $X$. 
Note that, for any $X\geq0$, $h\mapsto f(hX,h)$ is the restriction of $f$ to the ray
$$ R_X := \{(x,h)\in\dom f: x = hX,\ h>0\}\,. $$
By definition, $g(X)$ is the limit of this restriction as $h\to0$, so it should be the value of $f$ at the endpoint of $R_X$. This remains true for $X=\infty$ if we set $R_\infty=\{(x,0):x>0\}$.
Now this endpoint is $(0,0)$, so we have two problems: First,  $f$ is not defined there, and second, 
the endpoints of all rays $R_X$ (with different $X$) coincide.

That's why we don't find $\dom g$ in $\dom f$. But there is a way out, and this is {\em the idea of blow-up:
we simply add a separate endpoint for each ray $R_X$ to the picture}.

That is, we remove $(0,0)$ from $\dom f$ and replace it by a quarter circle as in Figure \ref{fig:domains}.
This produces a new space, denoted $[\dom f,(0,0)]$ and called the blow-up of $(0,0)$ in $\dom f$. A precise definition is given in Section \ref{subsec:blow-up}. It involves polar coordinates, and the quarter circle corresponds to $r=0$. We denote the quarter circle by $\ff$ (\lq front face\rq).
Each point of the blown-up space corresponds to a point of $\dom f$, as is indicated in Figure \ref{fig:domains} by the dashed rays.
We encode this by  a map
$$ \beta: [\dom f,(0,0)] \to \dom f$$
which maps $\ff$ to $(0,0)$ and is bijective between the complements of these sets.
Under this correspondence, $f$ translates into the function  $\beta^*f:=f\circ\beta$ on $[\dom f,(0,0)]$. Essentially, we will see that $\beta^*f$ is \lq $f$ written in polar coordinates\rq.
This simple construction solves all our problems:
\begin{itemize}
  \item
  $\beta^*f$ is defined on all of $[\dom f,(0,0)]$, including its full boundary. It is actually smooth, once we define what smoothness means on $[\dom f,(0,0)]$.
  \item
  If we identify $\ff$  with $[0,\infty]$ (the endpoint of the ray $R_X$ being identified with $X\in[0,\infty]$)
  then $g$ is the restriction of $\beta^*f$ to $\ff$.
  \item
  The pointwise limit $f_0(x)=\lim_{h\to0} f(x,h)$ is, for $x>0$, still the restriction of $f$ to the lower part of the boundary of $[\dom f,(0,0)]$. 
\end{itemize}
In addition, as we will see later, $\beta^*f$ also encodes how $f_0$ and $g$ relate to each other (so-called \lq matching\rq).

Summarizing, the multiple scales behavior of $f$ is completely encoded by the behavior of $\beta^*f$ near the boundary of $[\dom f,(0,0)]$, and different scales correspond to different segments (later called boundary hypersurfaces) of the boundary.

%The discontinuity of $f_0$ is 'explained' by this resolution.
%
%Finally, why is $\beta^*f$ the final answer? Couldn't we understand it better by doing additional blow-ups?
%No, polyhomogeneous...
%which is all that is needed for solving differential equations.
%
%We say that the blow-up $\beta:[\dom f,(0,0)]\to\dom f$ resolves $f$.
%the coordinate $X$ is essentially proj. coord.
%
%In Section \ref{subsec:resol-ex} we give another, purely geometric example of a resolution. 
%
%Is there a way to find a single object that 
%contains all these scaled pictures?

%%%%%%%%%%%
\subsection{Manifolds with corners}
Even if we wanted to study  problems on domains in $\R^n$ only, 
the natural setting for our theory is that of manifolds, for (at least) two reasons:
\begin{enumerate}
 \item Just as finite dimensional vector spaces are like $\R^n$ without choice of a basis, manifolds are locally like $\R^n$ without choice of a (possibly non-linear) coordinate system -- and foregoing such a choice leads to greater conceptual clarity. To put it more mundanely, it will be useful to use different coordinate systems (e.g. polar coordinates, projective coordinates), and it is reassuring to know that all constructions are independent of such choices.
 \item
Globally, a manifold represents how various local objects fit together -- 
and one of our goals is to fit different scales together.
In fact, even if the problem to be studied is topologically trivial, there may be non-trivial topology (or combinatorics) in the way that different scales relate to each other. 
\end{enumerate}
To get an idea what a manifold with corners is, look at Figure \ref{fig:mwc examples}. The most complicated specimen appearing in this text is on the right in Figure \ref{fig:blow-up}. 
\medskip

Recall that  a manifold is a space which can locally be parametrized by coordinates. For a manifold with corners some coordinates will be restricted to take only non-negative values. 
As before we use the notation
$$ \R_+ := [0,\infty)$$
and write $\R_+^k = (\R_+)^k$.
\begin{definition}
\label{def:mwc}
 A \defin{manifold with corners (mwc)} of dimension $n$ is a space $M$ which can locally be parametrized by open subsets of the \defin{model spaces} $\R_+^k\times\R^{n-k}$, for various $k\in\{0,\dots,n\}$.
 
 In addition, we require that  the boundary hypersurfaces be embedded, as  explained below.
\end{definition}
The model space condition is meant as in the standard definition of manifolds, for which only $k=0$ is allowed.
So for each point $p\in M$ there is  $k\in\{0,\dots,n\}$ and a neighborhood $U$ of $p$ with a coordinate map $U\to \tilde U$, with $\tilde U\subset \R_+^k\times\R^{n-k}$ open, and it is required that  coordinate changes are smooth.\footnote{\label{footnote: smooth}
Open means relatively open, that is, there is an open subset $\tilde U'\subset\R^n$ with $\tilde U' \cap (\R_+^k\times\R^{n-k})=\tilde U$. 
For example, $[0,1)$ is open in $\R_+$. 
A 
\defin{smooth function} on an open subset $\tilde U\subset\R_+^k\times\R^{n-k}$ is a function which extends to a smooth function on such a $\tilde U'$. A map $\tilde U\to \R_+^k\times\R^{n-k}$ is smooth if each component function is smooth. The space of smooth functions on $M$  (which are sometimes called \lq smooth up to the boundary\rq\ for emphasis) is denoted by $\Cinf(M)$.
}
The smallest $k$ which works for a fixed $p$ is called the \defin{codimension} of $p$.
See Figures \ref{fig:mwc examples}, \ref{fig:mwc non-examples} for some examples and non-examples of mwc.

The set of points of codimension 0 is the interior $\interior{M}$ of $M$. The closure of a connected component of the set of points of codimension $k$ is called a \defin{boundary hypersurface (bhs)} if $k=1$, and a \defin{corner of codimension $k$} if $k\geq2$. 
So the examples in Figure \ref{fig:mwc examples} have 1, 2, 3, 2 boundary hypersurfaces. 

It is clear that each boundary hypersurface itself satisfies the local model condition, with $n$ replaced by $n-1$. However, as in the example on 
the right in Figure \ref{fig:mwc non-examples}, it may happen that a boundary hypersurface \lq intersects itself\rq, 
that is, it is an immersed rather than an embedded submanifold (with corners). 
So according to our definition it is not a manifold with corners.

The embeddedness requirement is equivalent to the existence of a \defin{boundary defining function}
for each bhs $H$, i.e. a smooth function $x:M\to\R_+$ which vanishes precisely on $H$ and whose differential at any point of $H$ is non-zero. 
A boundary defining function $x$ can be augmented to a  \defin{trivialization near $H$}, i.e. an identification of  a neighborhood $U$ of $H$ with $[0,\eps)\times H$ for some $\eps>0$, where $x$ is the first component and each $y\in H\subset U$ corresponds to $(0,y)$.

Each bhs and each corner of a mwc $M$ is a mwc. But if $M$ has corners then its full boundary is not a manifold with corners.

Some authors, e.g. D.\ Joyce \cite{Joy:GMWC}, define manifolds with corners without the embeddedness condition on boundary hypersurfaces. Also, Joyce defines the notion of boundary of a mwc differently, so that it is also a mwc.

%%%%%%%%% figure: manifolds with corners
\begin{figure}
\begin{tikzpicture}
\matrix[column sep=1cm]
{
\draw[fill] (0,0) node[left]{\footnotesize 1} circle (1pt) -- (1,0) node[above]{\footnotesize 0} -- (2,0);
%\draw[dashed] (2,0) -- (3,0);
%\draw[fill] (0,0) circle (1pt);

\node at (1,-.5){$\R_+$};
&
\fill[gray!20] (0,0) rectangle (2,2);
\draw[fill] (0,0) node[left]{\footnotesize 2} circle (1pt) -- (1,0) node[above]{\footnotesize 1} -- (2,0);
%\draw[dashed] (2,0) -- (3,0);
\draw (0,0) -- (0,1) node[left]{\footnotesize 1} -- (0,2);
\node at (1,1) {\footnotesize 0};

\node at (1,-.5){$\R_+^2$};
&
%\fill[gray!20] (0,0) rectangle (2,2);
\fill[fill=gray!20] (0,0) rectangle (2,2)
                         (0,0) -- (1,0) arc (0:90:1) -- (0,0);  % Gray shading with binary logic: what is inside both paths is not shaded
\draw (1,0) arc (0:90:1);
\draw[fill] (1,0) node[left]{\footnotesize 2} circle (1pt) -- (1.8,0) node[above]{\footnotesize 1} -- (2,0);
\draw[fill] (0,1) node[left]{\footnotesize 2} circle (1pt) -- (0,1.8) node[left]{\footnotesize 1} -- (0,2);
\node at (.6,.6){\footnotesize 1};
\node at (1.2,1.2){\footnotesize 0};
&
\draw[fill] (0,1) node[left]{\footnotesize 2} circle (1pt)  (2,1) node[right]{\footnotesize 2} circle (1pt);
\node at (1,.6){\footnotesize 1};
\node at (1,1.4){\footnotesize 1};

%\clip (1,2.732) circle (2);  % to cut off shading
\filldraw[fill=gray!20,draw=black] (0,1) arc (240:300:2);
\filldraw[fill=gray!20,draw=black] (2,1) arc (60:120:2);
\node at (1,1){\footnotesize 0};

\\
};
 
\end{tikzpicture}
 \caption{Examples of manifolds with corners, with codimensions of points indicated}
 \label{fig:mwc examples}
\end{figure}
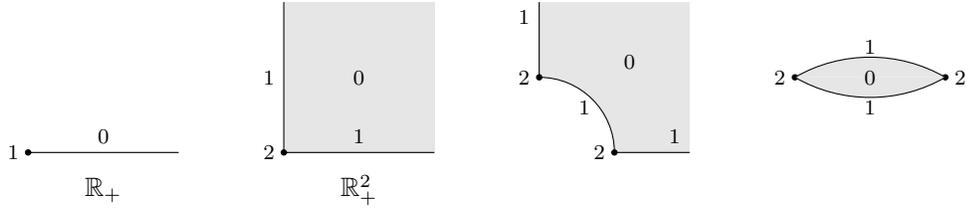
%%%%%%%%%% figure: not manifolds with corners
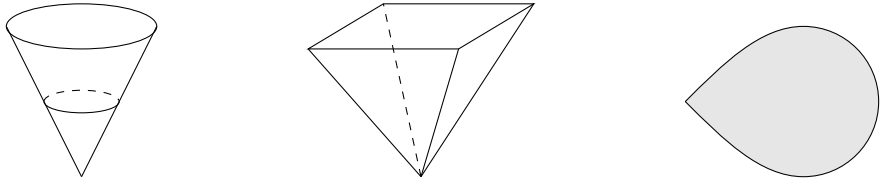
\begin{figure}
\begin{tikzpicture}
\matrix[column sep=2cm]
{
%% cone
\begin{scope}[x={(1cm,0)},y={(0cm,-.3cm)},z={(0,1cm)}]
 \draw (0,0,0) -- (-1,0,2);
 \draw (0,0,0) -- (1,0,2);

 \draw[canvas is xy plane at z=2] (0,0) circle (1);
 \draw[canvas is xy plane at z=1,dashed] (-.5,0) arc (-180:0:.5);
 \draw[canvas is xy plane at z=1] (.5,0) arc (0:180:.5);

\end{scope}
&
% Pyramid
\begin{scope}[x={(1cm,0)},y={(-.5cm,-.3cm)},z={(0,1cm)}]
 \draw (0,0,0) -- (1,1,2);
 \draw (0,0,0) -- (-1,1,2);
 \draw (0,0,0) -- (1,-1,2);
 \draw[dashed] (0,0,0) -- (-1,-1,2);

 \draw (1,1,2) -- (1,-1,2) -- (-1,-1,2) -- (-1,1,2) -- (1,1,2); 
\end{scope}
% teardrop
&
\begin{scope}[yshift=1cm]
\draw[fill=gray!20] (-1.5707,0) sin (0,-1) arc (-90:90:1)  cos (-1.5707,0); 
 
\end{scope}

\\
};
 
\end{tikzpicture}
 \caption{Not manifolds with corners. The cone and pyramid are understood as 3-dimensional bodies. The teardrop satisfies the local condition of a mwc, but the boundary line is not embedded.}
 \label{fig:mwc non-examples}
\end{figure}

Taylor's theorem implies the following simple fact which we need later. 
\begin{lemma} \label{lem:divide by bdf}
Let $M$ be a manifold with corners and $\calS$ a finite set of boundary hypersurfaces of $M$. Let $h$ be a total boundary defining function for $\calS$, i.e.\ the product of defining functions for all $H\in\calS$.

Then any $u\in\Cinf(M)$ which vanishes at each $H\in\calS$ can be written as $u=h\utilde$ with $\utilde\in\Cinf(M)$. 
\end{lemma}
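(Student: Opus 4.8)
The plan is to reduce to the local model $\R_+^k\times\R^{n-k}$ via a partition of unity, prove the statement there using the integral form of Taylor's theorem (in the normal variables), and then patch back together. Here is the sequence of steps.

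First I would set up the local picture. Since $\calS$ is finite, after a trivialization I may assume that near each $H\in\calS$ there is a boundary defining function $x_H$, and $h=\prod_{H\in\calS}x_H$ away from other boundary hypersurfaces. Fix a locally finite cover $\{U_\alpha\}$ of $M$ by coordinate charts $U_\alpha\cong \tilde U_\alpha\subset\R_+^{k_\alpha}\times\R^{n-k_\alpha}$, chosen so that each $U_\alpha$ meets only those $H\in\calS$ that pass through its closure, and so that on $U_\alpha$ each such $H$ is cut out by one of the coordinate functions $x_i\geq0$; write $\calS_\alpha\subseteq\{1,\dots,k_\alpha\}$ for the corresponding index set, so $h|_{U_\alpha}=\big(\prod_{i\in\calS_\alpha}x_i\big)\cdot e_\alpha$ with $e_\alpha\in\Cinf(U_\alpha)$ nowhere vanishing (it is the product of defining functions for the $H\notin\calS$ restricted to $U_\alpha$, which don't vanish on $U_\alpha$, times a smooth unit). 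The key local claim is: if $v\in\Cinf(\tilde U)$ vanishes on $\{x_i=0\}$ for each $i$ in a set $J\subseteq\{1,\dots,k\}$, then $v=\big(\prod_{i\in J}x_i\big)\,\vtilde$ with $\vtilde\in\Cinf(\tilde U)$ — and moreover $\vtilde$ is given by an explicit formula, so it behaves well under restriction to overlaps.

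The local claim I would prove by induction on $|J|$, peeling off one variable at a time. For a single variable $x_1$: if $v$ is smooth (so extends smoothly across $x_1=0$) and $v|_{x_1=0}=0$, then by the fundamental theorem of calculus $v(x_1,\dots)=x_1\int_0^1(\partial_1 v)(t x_1,x_2,\dots)\,dt$, and the integral is smooth by differentiation under the integral sign; call it $(Pv)$, a smooth operator with $v=x_1\,(Pv)$. Crucially, $Pv$ is canonically defined and commutes with restriction of the remaining variables, and if $v$ additionally vanishes at $x_j=0$ for $j\neq1$ then so does $Pv$ (set $x_j=0$ inside the integral). Iterating over $i\in J$ gives $v=\big(\prod_{i\in J}x_i\big)\vtilde$ with $\vtilde=P_{i_{|J|}}\cdots P_{i_1}v$ smooth; one checks the order of the $P_i$ does not matter, but we won't need that.

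Now the globalization. Apply the local claim on each $U_\alpha$ to $v=u|_{U_\alpha}$ with $J=\calS_\alpha$, obtaining $u|_{U_\alpha}=\big(\prod_{i\in\calS_\alpha}x_i\big)\,w_\alpha=\big(h/e_\alpha\big)\,w_\alpha$, i.e. $u|_{U_\alpha}=h\cdot(w_\alpha/e_\alpha)$ with $w_\alpha/e_\alpha\in\Cinf(U_\alpha)$. Away from $\supp h\cap U_\alpha$ — more precisely on the open set where $h\neq0$ — the quotient $u/h$ is already globally well-defined and smooth, so the only issue is smoothness across $\{h=0\}=\bigcup_{H\in\calS}H$. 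On overlaps $U_\alpha\cap U_\beta$ the two expressions $w_\alpha/e_\alpha$ and $w_\beta/e_\beta$ agree wherever $h\neq0$; since $\{h\neq0\}$ is dense (its complement is a finite union of hypersurfaces) and both are continuous, they agree everywhere on the overlap. Hence the local pieces glue to a single function $\utilde\in\Cinf(M)$ with $u=h\utilde$. (Equivalently: take a partition of unity $\{\chi_\alpha\}$ subordinate to $\{U_\alpha\}$ and set $\utilde=\sum_\alpha\chi_\alpha\,(w_\alpha/e_\alpha)$; the consistency on overlaps shows this equals each $w_\alpha/e_\alpha$ on $U_\alpha$, hence is smooth and satisfies $h\utilde=\sum_\alpha\chi_\alpha u=u$.)

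\textbf{Main obstacle.} The only real subtlety is bookkeeping in the multi-variable case: making sure that when I peel off $x_i$ for $i\in J$ using the Taylor/integral operator, the result still vanishes at the remaining $x_j=0$ ($j\in J$, $j\neq i$), so the induction can proceed. This is what the ``set $x_j=0$ inside the integral'' observation handles, and it is the point that must be stated carefully. Everything else — differentiation under the integral sign, the partition of unity, the density of $\{h\neq0\}$ for patching — is routine. One should also note at the outset that only finitely many $H\in\calS$ meet any given compact set is automatic since $\calS$ is assumed finite, which keeps $h$ a bona fide smooth function globally.
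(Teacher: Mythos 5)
Your proof is correct and follows exactly the route the paper indicates: the paper merely says ``Taylor's theorem implies'' and leaves the proof as an exercise, and your argument --- the integral form of Taylor's theorem in one normal variable at a time, the observation that the resulting operator preserves vanishing on the remaining faces of $\calS$ (since those derivatives are tangential), and the gluing over charts using that $\{h\neq0\}$ is dense --- is precisely the intended elaboration, with the embeddedness of boundary hypersurfaces (built into the paper's definition of mwc) being what makes the local normal form $h=\bigl(\prod_{i\in\calS_\alpha}x_i\bigr)e_\alpha$ available, and its failure being why the pyramid counterexample in the exercise arises. One cosmetic slip: $e_\alpha$ should be described as the product of the defining functions of those $H\in\calS$ that do not meet $U_\alpha$, times smooth positive units, rather than of the $H\notin\calS$, which do not enter $h$ at all.
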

Exercise: Prove this. Show that the analogous statement would not be true for the pyramid in Figure \ref{fig:mwc non-examples}.%
\footnote{If you understand this then you understand one of the main points about manifolds with corners!}

% (needed in Lemma \ref{lem:adiab ends qm exact}

\begin{remark}
{\bfseries The corners of a mwc should not be considered as a problem, but as (part of) a solution}
 -- of all kinds of problems involving singularities.
 They should not be thought of as corners in a metric sense, only in a differential sense (i.e. some coordinates are $\geq0$). 
  
 For example, suppose you want to analyze the behavior of harmonic functions near the vertex of $\R^2_+$ or of a cone or of the pyramid in  Figure \ref{fig:mwc non-examples} (where the Laplacian is the standard Laplacian for the Euclidean metric on these spaces).
 The essential first step towards a solution would be to introduce polar coordinates around the vertex, and in the case of the pyramid also cylindrical coordinates around the edges. 
Geometrically this corresponds to the operation of blow-up, discussed below. This results in manifolds with corners. 
The fact that the original (metric) $\R^2_+$ happens to be a mwc also is irrelevant.

\end{remark}
\begin{remark} 
\label{rem:mwc divisors}
 Manifolds with corners are an oriented analogue of 
manifolds with normal crossings divisors as used in real algebraic geometry. \lq Oriented\rq\ means that the boundary hypersurfaces, which correspond to the components of the divisor, have a relative orientation, i.e. possess a transversal vector field.
The use of manifolds with corners allows for greater flexibility 
in many analytic problems.
See also Remark \ref{rem:blow-up alg geom}.
\end{remark}

%%%%%%%%%%%
\subsection{Polyhomogeneous functions}
All functions we consider will be smooth in the interior of their domains. 
Our interest will lie in their boundary behavior -- partly because we have a much better chance to analyze their boundary behavior than their interior properties. Functions smooth up to the boundary
(see Footnote \ref{footnote: smooth})
have the following important properties:
\begin{itemize}
 \item[1.]
A smooth function on $\R_+$ has a Taylor expansion $f(x)\sim \sum_{k=0}^\infty a_k x^k$ as $x\to0$, i.e. at $\partial\R_+$. 
\item[2a.]
A smooth function on $\R^2_+$ has Taylor expansions
\begin{equation}
\label{eqn:smooth expansions} 
 f(x,y) \sim \sum_{k=0}^\infty a_k(y)x^k\ \text{as }x\to0,\quad
f(x,y) \sim \sum_{l=0}^\infty b_l(x)y^l\ \text{as }y\to0
\end{equation}
at the boundary hypersurfaces $x=0$ and $y=0$ of $\R^2_+$,
with $a_k,b_l$ smooth on $\R_+$.
\item[2b.] (Matching) For each $k,l\in\N_0$ the  $l$-th Taylor coefficient of $a_k$ at $y=0$ equals (\lq matches\rq) the $k$-th Taylor coefficient of $b_l$ at $x=0$. This corresponds to the Taylor expansion of $f$ at the corner $(0,0)$.
 \item[2c.] (Borel lemma) Conversely, given $a_k,b_l\in \Cinf(\R_+)$ satisfying these matching (or compatibility) conditions for all $k,l$, there is a function $f\in\Cinf(\R^2_+)$ satisfying
  \eqref{eqn:smooth expansions}, and it is unique modulo functions vanishing to infinite order at the boundary of $\R^2_+$.
\end{itemize}
It turns out that requiring smoothness up to the boundary  is too restrictive for many purposes. The class of {polyhomogeneous} functions\footnote{These are called \lq nice functions\rq\ in \cite{Gri:BBC}.} is obtained by replacing the powers $x^m$, $m\in\N_0$  in these expansions by terms $x^z \log^jx$ where $z\in\C$ and $j\in\N_0$, and is big enough for many problems.

Apart from this, polyhomogeneous functions enjoy the analogous properties as listed above.
Properties 2b. and 2c. will be essential for our purpose of analyzing multiple scale solutions of PDEs.

\subsubsection{Definition and examples}
We will define the space of polyhomogeneous functions on a manifold with corners $M$. 
The essence of the definition can be grasped from two special cases: $M=\R_+\times\R^n$ where $n\in\N_0$ and $M=\R_+^2$. The terms permitted in an expansion are characterized by a set  $E\subset \C\times\N_0$ satisfying
\begin{equation}
 \label{eqn:index set loc fin}
 \{(z,j)\in E:\ \Re z \leq r\}\quad\text{is finite for every }r\in\R\,.
\end{equation}
This guarantees that the expansion \eqref{eqn:phg def2} below makes sense.
\begin{definition}
A \defin{polyhomogeneous function} on $M=\R_+\times\R^n$ or $M=\R_+^2$ is a smooth function $u$ on $\interior{M}$ satisfying:
\begin{enumerate}[(a)]
 \item For $M=\R_+\times\R^n$: $u$ has an asymptotic expansion
 \begin{equation}
\label{eqn:phg def2}
 u(x,y) \sim \sum_{(z,j)\in E} a_{z,j}(y)\, x^z \log^j x \quad\text{ as }x\to 0
\end{equation}
for each $y\in\R^n$, for a set $E$ as above, where each $a_{z,j}\in\Cinf(\R^n)$. 

The set of these functions with $E$ fixed is denoted $\calA^E(\R_+\times\R^n)$.
\smallskip

\item
For $M=\R_+^2$: $u$ has an asymptotic expansion
\eqref{eqn:phg def2} for each $y>0$, where each
$a_{z,j}\in\calA^F(\R_+)$, for sets $E,F\subset\C\times\N_0$ satisfying \eqref{eqn:index set loc fin}.
\\
Also, the same condition is required to hold with $x,E$ and $y,F$  interchanged.

The set of these functions with $E,F$ fixed is denoted $\calA^{E,F}(\R_+^2)$.
\end{enumerate}
By definition, we understand  asymptotic expansions always \lq with derivatives\rq, i.e.   $\partial_x u$ has the asymptotic series 
 with each term differentiated, and similarly for $\partial_yu$ and higher derivatives. In addition, certain uniformity conditions are required.
\end{definition}
All asymptotic expansions occuring in the problems in this article have no logarithms, so $E\subset\C\times\{0\}$.\footnote{%
\label{footnote:logs}
However, logarithms are included in the definition since they appear in the solutions of many differential equations even if they don't appear in their coefficients. For example, the equation
$u' = f_h, \ u(0)=0 $
with $f_h$ as in \eqref{eqn:fh def} has solution $u_h(x)=x-h\log\left(\frac xh + 1\right)$ which for fixed positive $x$ has the expansion
$$u_h(x) \sim x + h\log h -h\log x + O(h^2)$$
as $h\to0$. The appearance of the log term here can be predicted without calculating integrals, using geometric resolution analysis via the push-forward theorem of Melrose \cite{Mel:CCDMWC}, as is explained in \cite{Gri:BBC} for the related example where $f_h(x)=\sqrt{x^2+h^2}$, see also \cite{GriGru:SALPFT}.
}

The \lq asymptotics with derivatives\rq\ condition is equivalent to
\begin{equation}
 \label{eqn:asymp precise}
 \left| (x\partial_x)^\alpha \partial_y^\beta\left( u(x,y) - \sum_{(z,j)\in E,\Re z\leq r} a_{z,j}(y)\, x^z \log^j x\right) \right| \leq C_{r,\alpha,\beta} \,x^r
\end{equation}
for all $r\in\R$ and all $\alpha\in\N_0,\beta\in\N_0^n$. Here $C_{r,\alpha,\beta}$ may depend on $y$. 
For $M=\R_+\times\R^n$ the local uniformity condition is that for any compact $K\subset\R^n$  the same constant can be chosen for all $y\in K$. \\
For $M=\R_+^2$ this is required for all compact $K\subset(0,\infty)$, plus a local uniformity near $(x,y)=(0,0)$: there is $N\in \R$ so that estimate \eqref{eqn:asymp precise} holds for all $y\in(0,1)$, with $\partial_y$ replaced by $y\partial_y$ and $C_{r,\alpha,\beta}$ by $C_{r,\alpha,\beta} y^{-N}$.

We now give examples and then formulate the general definition. 

\medskip
\begin{examples}\mbox{}
\label{ex:phg fcns}
\begin{enumerate}
 \item $u(x)=\frac1x$ is in $\calA^E(\R_+)$ for $E=\{(-1,0)\}$.
 \item If $E=F=\N_0\times\{0\}$ then $u\in\calA^{E,F}(\R^2_+)$ if and only if $u$ extends smoothly to the boundary of $\R^2_+$.\footnote{Exercise: prove this.}
 \item \label{ex:phg fcn not}
 $u(x,y) = \frac x{x+y}$ is smooth on $\R^2_+\setminus\{(0,0)\}$, but not polyhomogeneous (for any index sets) on $\R^2_+$. To see this, we expand $u$ as $x\to0$ for fixed $y>0$:
\begin{equation}
\label{eqn:f expansion 2} 
 u(x,y) = \frac x{x+y} = \frac1{y}\frac x{1+\frac{x}{y}} = \frac1{y}x - \frac1{y^2}x^2 + \frac1{y^3}x^3-+\cdots
\end{equation}
 We see that $u$ has an expansion as in \eqref{eqn:phg def2}, but the coefficients $a_{k,0}(y)=(-1)^k y^{-k}$ become more and more singular (for $y\to0$) as $k$ increases, so there is no index set $F$ for which all coefficients lie in $\calA^F(\R_+)$.

Note that this is precisely our first example 
\eqref{eqn:fh def}.
\end{enumerate}
\end{examples}

A set $E\subset\C\times\N_0$ satisfying \eqref{eqn:index set loc fin} and in addition $(z,j)\in E, l\leq j\Rightarrow (z,l)\in E$ is called an \defin{index set}. This condition guarantees that $\calA^E(\R_+\times\R^n)$ is invariant under the operator $x\partial_x$. If, in addition, 
$(z,j)\in E\Rightarrow (z+1,j)\in E$ then $E$ is called a \defin{smooth (or $\Cinf$) index set}. This guarantees coordinate independence, i.e.\ any self-diffeomorphism of $\R_+\times\R^n$ preserves the space 
$\calA^E(\R_+\times\R^n)$. The index set $E$ in Example \ref{ex:phg fcns}.1 is not smooth; the smallest smooth index set containing $E$ is $\{-1,0,1,\dots\}\times\{0\}$.  
\smallskip 

We now consider general manifolds with corners. Of course we want to say a function is polyhomogeneous if it is so in any coordinate system. Since we want to allow corners of higher codimension, we give an inductive definition.

 An
\defin{index family}
for $M$ is an assignment $\calE$ of a $\Cinf$ index set $\calE(H)$ to each boundary hypersurface $H$ of $M$.
Recall that there is a trivialization near each $H$, i.e. we may write points near $H$ as pairs $(x,y)$ where $x\in[0,\eps)$  and $y\in H$, for some $\eps>0$.

\begin{definition}
Let $M$ be a manifold with corners and $\calE$ an index family for $M$. A \defin{polyhomogeneous function} on $M$ with index family $\calE$ is a smooth function $u$ on $\interior{M}$ which has  an expansion as in \eqref{eqn:phg def2} at each boundary hypersurface $H$, 
in some trivialization near $H$, where 
 $E=\calE(H)$ and the functions $a_{z,j}$ are polyhomogeneous on $H$ with the induced index family for $H$.%
 \footnote{The index family $\calE$ for $M$ induces an index family $\calE_H$ for the mwc $H$ as follows: Any boundary hypersurface $H'$ of $H$ is a component of a set $H\cap G$ where $G$ is boundary hypersurface of $M$ uniquely determined by $H'$. Then we let $\calE_H(H') := \calE(G)$. We require $a_{z,j}\in\calA^{\calE_H}(H)$ for each $(z,j)\in\calE(H)$ and each $H$. If this is true in one trivialization then it is true in any other, since each $\calE(H)$ is a $\Cinf$ index set.  Local uniformity is also required, analogous to the explanation after equation \eqref{eqn:asymp precise}.

This definition is inductive over the highest codimension of any point in $M$.}
 
\noindent The set of these functions is denoted $\calA^\calE(M)$.
\end{definition}

Again, if $\calE(H)=\N_0\times\{0\}$ for all $H$ then $u\in \calA^\calE(M)$ if and only if $u$ extends to a smooth function on all of $M$.

\begin{remark}
  \label{rem:functions int}
In our terminology a
\lq polyhomogeneous function on a manifold with corners $M$\rq\ needs to be defined on the interior $\interior{M}$ only.
The terminology is justified since its behavior near the boundary is prescribed;  $\Cinf(\interior M)$ is the much larger space of functions without prescribed boundary behavior. More formally, $\calA^\calE$ defines a sheaf over $M$, not over $\interior M$. 
\end{remark}

\subsubsection{Matching conditions and Borel lemma}
\label{subsubsec:phg product}
A central point of polyhomogeneity is to have \lq product type\rq\ asymptotic expansions at corners. This is most clearly seen in the case of $\R^2_+$.
To ease notation we formulate this only for the case without logarithms.

\begin{lemma}[Matching conditions]
\label{lem:compatibility conditions}
 Let $E,F\subset\C\times\{0\}$ be index sets for $\R^2_+$.
 Suppose $u\in \calA^{E,F}(\R^2_+)$, and assume $u$ has expansions
\begin{equation}
 \label{eqn:u expansions}
\begin{aligned}
  u(x,y) &\sim \sum_{(z,0)\in E} a_{z}(y)\, x^z  &\text{ as }x\to 0\\
 u(x,y) &\sim \sum_{(w,0)\in F} b_{w}(x)\, y^w   & \text{ as } y\to 0
\end{aligned}
\end{equation}
where $a_{z}\in\calA^F(\R_+)$, $b_{w}\in\calA^E(\R_+)$ for each $(z,0)\in E$, $(w,0)\in F$.
Expand
\begin{equation}
 \label{eqn:a,b expansion}
 a_z(y) \sim \sum_{(w,0)\in F} c_{z,w}\, y^w,\quad
b_w(x) \sim \sum_{(z,0)\in E} c'_{z,w}\, x^z 
\end{equation}
as $y\to0$ resp. $x\to 0$. Then
\begin{equation}
 \label{eqn:compatibility condition}
 c_{z,w} = c'_{z,w} \quad\text{ for all }z,w.
\end{equation}
\end{lemma}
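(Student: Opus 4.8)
The plan is to derive the matching condition \eqref{eqn:compatibility condition} by evaluating the single well-defined quantity $c_{z,w}$ as an iterated limit of $u$ in two different orders and showing both orders agree. Concretely, for fixed $(z,0)\in E$ and $(w,0)\in F$, the coefficient $c_{z,w}$ should be extractable from $u$ by a composite of difference-quotient–type operations: first peel off the $x$-expansion of $u$ up to and including the $x^z$ term to isolate $a_z(y)$, then peel off the $y$-expansion of $a_z$ up to the $y^w$ term to isolate $c_{z,w}$. The symmetric procedure (first in $y$, then in $x$) extracts $c'_{z,w}$. So the whole content is that these two extraction procedures, applied to the same function $u$, give the same answer, and this is forced by the joint polyhomogeneity $u\in\calA^{E,F}(\R^2_+)$ together with the \lq asymptotics with derivatives\rq\ and local uniformity conditions recorded after \eqref{eqn:asymp precise}.

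To make the extraction precise I would enumerate $E$ as $z_0,z_1,z_2,\dots$ with $\Re z_0\le\Re z_1\le\cdots$ (possible by \eqref{eqn:index set loc fin}), and similarly $F$ as $w_0,w_1,\dots$. Say $z=z_m$, $w=w_n$. Define the operator $R_x^{(m)}$ acting on a function $v(x,y)$ by successively subtracting the leading terms: $R_x^{(0)}v = v$, and $R_x^{(k+1)}v(x,y) = x^{-z_{k+1}}\bigl(R_x^{(k)}v(x,y) - \lim_{x\to0} x^{-z_k} R_x^{(k-1)}v(x,y)\cdot\text{(placeholder)}\bigr)$ — more cleanly: let $A_k v(y) := \lim_{x\to0} x^{-z_k}\bigl(v(x,y) - \sum_{j<k} A_j v(y)\, x^{z_j}\bigr)$ be the coefficient of $x^{z_k}$ in the expansion of $v$, so that $A_m\bigl[u(\cdot,y)\bigr] = a_{z_m}(y) = a_z(y)$ for each fixed $y>0$. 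Then apply the analogous operator $B_n$ in the $y$-variable to $a_z$: $B_n\bigl[a_z\bigr] = c_{z,w}$. The claim is that the same double limit, taken in the other order — first extracting the $y^{w_n}$ coefficient $b_{w_n}(x) = b_w(x)$ of $u$, then the $x^{z_m}$ coefficient of $b_w$ — yields $c'_{z,w}$, and that these coincide.

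The key step, and the main obstacle, is justifying the interchange of the two limiting procedures. This is exactly where one needs the uniformity built into the definition: by \eqref{eqn:asymp precise} with the local uniformity near $(0,0)$ stated right after it, the remainder after subtracting finitely many terms of the $x$-expansion is controlled by $C\, x^r y^{-N}$ for a fixed $N$ and arbitrarily large $r$, uniformly in $y\in(0,1)$, and the \lq with derivatives\rq\ clause lets us differentiate this in $y$ (replacing $\partial_y$ by $y\partial_y$) with the same kind of bound. This means: subtracting $\sum_{j\le M} a_{z_j}(y)x^{z_j}$ from $u$ leaves a function $r_M(x,y)$ which, together with its $y$-derivatives, is $O(x^{\Re z_M + \delta} y^{-N})$ for some $\delta>0$, uniformly near the corner; hence its own $y$-expansion coefficients (which exist since $r_M\in\calA^{E',F}$ for a truncated index set $E'$) are $O(x^{\Re z_M+\delta})$, so they do not contribute to the $x^{z_m}$ coefficient for $j\le m$. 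Running this bookkeeping from both sides shows that the finite sum $\sum_{j\le m,\,k\le n} c_{z_j,w_k} x^{z_j} y^{w_k}$ must equal $\sum_{j\le m,\,k\le n} c'_{z_j,w_k} x^{z_j} y^{w_k}$ modulo terms that are $o(x^{\Re z_m} y^{\Re w_n})$ near $0$, and since the monomials $x^{z_j}y^{w_k}$ are linearly independent in the relevant asymptotic sense, matching the $x^{z}y^{w}$ coefficients gives $c_{z,w} = c'_{z,w}$. An induction on $m+n$ (peeling off lower-order corner monomials first, using the already-established equalities $c_{z_j,w_k}=c'_{z_j,w_k}$ for $j+k < m+n$) organizes this cleanly; the base case $m=n=0$ is just $\lim_{(x,y)\to(0,0)} u(x,y)\, x^{-z_0} y^{-w_0}$ evaluated two ways, which is immediate from joint continuity of the leading rescaling.
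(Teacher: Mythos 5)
The paper states this lemma without proof, so your argument has to stand on its own --- and it has a genuine gap at exactly the point you yourself flag as the key step. Your justification of the interchange is: after subtracting the partial $x$-expansion, the remainder $r_M$ satisfies $|(y\partial_y)^\alpha r_M|\le C\,x^{\Re z_M+\delta}y^{-N}$ uniformly for $y\in(0,1)$, \emph{hence} its $y$-expansion coefficients are $O(x^{\Re z_M+\delta})$. That inference does not follow: the $y$-coefficients of $r_M$ are extracted by limits $y\to 0$ after dividing by powers of $y$, and every bound you have degenerates like $y^{-N}$ in precisely that limit, so it gives no control on those limits (derivative bounds carrying the same $y^{-N}$ weight do not help). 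Worse, the statement you are asserting is the lemma in disguise: the $y$-coefficients of $r_M$ are $b_w(x)-\sum_{j\le M}c_{z_j,w}x^{z_j}$, so saying they are $O(x^{\Re z_M+\delta})$ --- equivalently your parenthetical claim that $r_M\in\calA^{E',F}(\R^2_+)$ for the truncated index set $E'$, since that membership requires by definition that the $y$-coefficients lie in $\calA^{E'}(\R_+)$ --- is exactly the matching condition $c_{z_j,w}=c'_{z_j,w}$ for $j\le M$. The same issue already undermines your base case: joint continuity of $x^{-z_0}y^{-w_0}u$ at the corner is not part of the definition (the corner uniformity comes only with the $y^{-N}$, resp.\ $x^{-N'}$, weight), so even $c_{z_0,w_0}=c'_{z_0,w_0}$ is not immediate.

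What is missing is an argument that uses \emph{both} expansions to arbitrarily high order so as to beat the two fixed weights. For instance: write $u=\sum_{\Re z_j\le R}a_{z_j}(y)x^{z_j}+G_R$ and $u=\sum_{\Re w_k\le R'}b_{w_k}(x)y^{w_k}+H_{R'}$ with $|G_R|\le C_R\,x^{R}y^{-N}$ and $|H_{R'}|\le C_{R'}\,y^{R'}x^{-N'}$, expand each one-variable coefficient at the corner to the same orders, and subtract the two representations; this gives $\sum_{j,k}(c_{z_j,w_k}-c'_{z_j,w_k})x^{z_j}y^{w_k}$ equal to a sum of four errors ($G_R$, $H_{R'}$, and the two coefficient remainders). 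Restrict to a ray $y=x^{\theta}$ with $\theta>0$ generic (so the exponents $z_j+\theta w_k$ are distinct) and choose $R,R'$ so large that all four errors are $o(x^{\Re z+\theta\Re w})$; here the fixed losses $N,N'$ are beaten because $R,R'$ are at your disposal while $N,N'$ are not. Then the standard vanishing argument for finite sums of powers $x^{s}$ (with a short trigonometric-polynomial step when several exponents share a real part --- a case your ordered-enumeration-plus-$\lim_{x\to0}x^{-z_k}$ extraction also mishandles, since those limits need not exist) yields $c_{z,w}=c'_{z,w}$. Your bookkeeping and induction can be salvaged along these lines, but as written the decisive interchange is asserted rather than proved.
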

This has a converse, which is a standard result:
\begin{lemma}[Borel lemma]
\label{lem:borel}
 Let $E,F$ be as in the previous lemma, and assume that functions $a_z,b_w$ satisfying \eqref{eqn:a,b expansion} are given.
 
 If \eqref{eqn:compatibility condition} holds then there is $u\in \calA^{E,F}(\R^2_+)$ satisfying \eqref{eqn:u expansions}. It is uniquely determined up to errors vanishing to infinite order at the boundary.
\end{lemma}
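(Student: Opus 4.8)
The plan is to build $u$ in two steps — first a function with the prescribed expansion along $\{x=0\}$, then a correction, invisible at $\{x=0\}$, fixing the expansion along $\{y=0\}$ — and then to treat uniqueness separately, using the uniformity estimates in the definition of $\calA^{E,F}(\R_+^2)$. The one genuine input is a parametrized Borel lemma on $\R_+$: given a formal series $\sum_{(z,0)\in E}a_z(y)\,x^z$ with coefficients $a_z\in\calA^F(\R_+)$, one realizes it by $u_1(x,y):=\sum_{(z,0)\in E}\chi(\lambda_z x)\,a_z(y)\,x^z$, where $\chi\in\Cinf([0,\infty))$ equals $1$ near $0$ and vanishes for large argument, $E=\{(z_0,0),(z_1,0),\dots\}$ is enumerated with $\Re z_k$ nondecreasing, and the $\lambda_k>0$ grow fast enough that the series and all of its $x\partial_x$- and $y$-derivatives converge locally uniformly on $\interior{\R_+^2}$ and the estimates \eqref{eqn:asymp precise}, including the near-corner uniformity, hold. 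Existence of such $\lambda_k$ is the usual convergence-factor argument; the only point beyond the textbook version is that the parameter $y$ itself ranges over $\R_+$ with its $\calA^F$-structure, but the cutoffs do not depend on $y$, so there is no interaction, and one checks $u_1\in\calA^{E,F}(\R_+^2)$ directly; its expansion at $\{y=0\}$ has coefficients $b_w^{(1)}\in\calA^E(\R_+)$, and Lemma \ref{lem:compatibility conditions} applied to $u_1$ gives $b_w^{(1)}(x)\sim\sum_{(z,0)\in E}c_{z,w}\,x^z$.

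\emph{Existence.} With $u_1$ as above we have $u_1\sim\sum_z a_z(y)x^z$ at $\{x=0\}$. By the hypothesis \eqref{eqn:compatibility condition} and $b_w(x)\sim\sum_z c'_{z,w}x^z$, the difference $\beta_w:=b_w-b_w^{(1)}\in\calA^E(\R_+)$ has vanishing Taylor series at $x=0$, hence vanishes to infinite order there. Applying the same Borel construction in the $y$-variable to the $\beta_w$, put $v(x,y):=\sum_{(w,0)\in F}\chi(\mu_w y)\,\beta_w(x)\,y^w$ with $\mu_w\to\infty$ chosen as before. Every term, and — with the $\mu_w$ tuned — the sum together with all its derivatives, vanishes to infinite order at $\{x=0\}$ uniformly up to the corner; thus $v\in\calA^{E,F}(\R_+^2)$, it contributes nothing to the expansion at $\{x=0\}$, and $v\sim\sum_w\beta_w(x)y^w$ at $\{y=0\}$. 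Hence $u:=u_1+v\in\calA^{E,F}(\R_+^2)$ satisfies $u\sim\sum_z a_z(y)x^z$ at $\{x=0\}$ and $u\sim\sum_w(b_w^{(1)}+\beta_w)(x)\,y^w=\sum_w b_w(x)\,y^w$ at $\{y=0\}$, which is \eqref{eqn:u expansions}.

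\emph{Uniqueness.} If $u,u'\in\calA^{E,F}(\R_+^2)$ both satisfy \eqref{eqn:u expansions}, then $w:=u-u'$ has identically zero expansions at $\{x=0\}$ and $\{y=0\}$. On any region $\{0<x<\eps,\ \delta\le y\le\delta^{-1}\}$ the asymptotics-with-derivatives condition forces $|(x\partial_x)^\alpha\partial_y^\beta w|\le C_{N,\alpha,\beta,\delta}\,x^N$ for all $N$, and symmetrically near $\{y=0\}$; near the corner the extra local uniformity in the definition of $\calA^{E,F}$ supplies some $N_0$ with $|w|\le C_r\,x^r y^{-N_0}$ and, by symmetry, $|w|\le C_r\,y^r x^{-N_0}$ for $x,y\in(0,1)$, whence $|w|\le C_r\,(xy)^{(r-N_0)/2}$. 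The same bounds for all $b$-derivatives show that $w$ vanishes to infinite order at $\partial\R_+^2$ (with respect to the total boundary defining function $xy$), so $u$ is unique modulo such errors. The logarithmic case is handled verbatim, carrying the finitely many additional coefficients through the same steps.

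\emph{Main obstacle.} The substance is entirely in the two Borel summations: choosing $\lambda_k$ and $\mu_w$ so that convergence and the estimates \eqref{eqn:asymp precise} hold simultaneously for all the derivatives entering the definition, and — the genuinely delicate point — so that the near-corner uniformity, which tolerates a fixed negative power $y^{-N}$ (resp.\ $x^{-N}$), survives. This is routine but tedious; I would either invoke the parametrized Borel lemma from \cite{Mel:DAMWC} and note that having the parameter on $\R_+$ rather than on $\R^n$ changes nothing, or carry out the diagonal argument once and reuse it in both steps.
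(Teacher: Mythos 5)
The paper offers no proof of this lemma — it is quoted as a standard result — and your argument is precisely the standard one: a parametrized Borel summation $u_1=\sum_z\chi(\lambda_z x)a_z(y)x^z$ realizing the expansion at $\{x=0\}$, followed by a correction $v=\sum_w\chi(\mu_w y)\beta_w(x)y^w$ whose coefficients $\beta_w=b_w-b_w^{(1)}$ vanish to infinite order at $x=0$ exactly because of the matching condition \eqref{eqn:compatibility condition}, so that $v$ is invisible at $\{x=0\}$ and repairs the expansion at $\{y=0\}$; the uniqueness via the geometric-mean interpolation of the two near-corner estimates $|w|\le C_r x^r y^{-N_0}$ and $|w|\le C_r y^r x^{-N_0}$ is also correct. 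The only place where care is still owed is the one you flag yourself: the convergence factors $\lambda_z$ must be chosen large enough not only for the seminorms of the $a_z$ but also so that $b_w^{(1)}(x)=\sum_z\chi(\lambda_z x)c_{z,w}x^z$ lies in $\calA^E(\R_+)$ (i.e.\ the $\lambda_z$ dominate the Taylor coefficients $c_{z,w}$ for $\Re w$ in each bounded range), and so that the $y^{-N}$-weighted corner uniformity of \eqref{eqn:asymp precise} survives — this is the routine diagonal argument you defer to \cite{Mel:DAMWC}, and deferring it is consistent with the level at which the paper treats the lemma.
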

This will be a central tool in our analysis since it allows us to construct approximate solution of a PDE from solutions of model problems.

%%%%%%%%%%%
\subsection{Blow-up and resolution}
\label{subsec:blow-up}
We now introduce blow-up, which is what makes the whole manifolds with corners business interesting.
Here are the most important facts about blow-up. They will be explained in this section:
\medskip

\begin{itemize}
\itemsep2mm
 \item {\bf Blow-up is a geometric and coordinate free way to introduce polar coordinates.}
 \item {\bf Blow-up serves to desingularize singular objects.}
 \item {\bf Blow-up helps to understand scales and transitions between scales -- and therefore to solve PDE problems involving different scales.}
\end{itemize}

We first explain the idea in the case of blow-up of 0 in $\R^2$ and then give the general definition in Subsection \ref{subsubsec:b-up def}. After discussing resolutions and projective coordinates we return to our motivating example \eqref{eqn:fh def} in Example \ref{ex:proj coords resol}. There is also a short discussion of quasihomogeneous blow-up, which occurs naturally in Section \ref{sec:adiab variable}.

%%%%%%% figure: idea of blow-up
\begin{figure}
\centering
\begin{tikzpicture}
\def\length{1.5}  %% length of rays
%defined globally: \def\bupr{.4}   %% radius of blow-up circle 
 
\begin{scope}  %%% left: R^2
\draw[fill=gray!20,draw=white] (-2,-2) rectangle (2,2);
\draw[fill] (0,0) circle (1pt);

\foreach \angle in {0,72,144,216,288}  
\draw[dashed] (0,0) -- (\angle:\length); 
 
\draw[dotted] (0,0) circle (.6);
 
\node at (0,-2.5) {$\R^2$};
\end{scope}

\node at (3,0) {$\xleftarrow{\beta}$}; 

\begin{scope}[xshift=6cm] %%% right: [R^2,0]
 \draw[fill=gray!20,draw=white] (-2,-2) rectangle (2,2);
 \draw[fill=white] (0,0) circle (\bupr);

\foreach \angle in {0,72,144,216,288}  
{
\draw[dashed] (\angle:\bupr) -- (\angle:\length+\bupr); 
\draw[fill] (\angle:\bupr) circle (.7pt);
} 

\draw[dotted] (0,0) circle (.6+\bupr);

\node at (0,-2.5) {$[\R^2,0]$};
 
\end{scope}
\end{tikzpicture}

 \caption{Blow-up of 0 in $\R^2$, with a few rays (dashed) and a pair of corresponding circles (dotted) drawn; the white disk is not part of $[\R^2,0]$; its inner boundary circle is the front face
}
 \label{fig:def blow-up}
\end{figure}
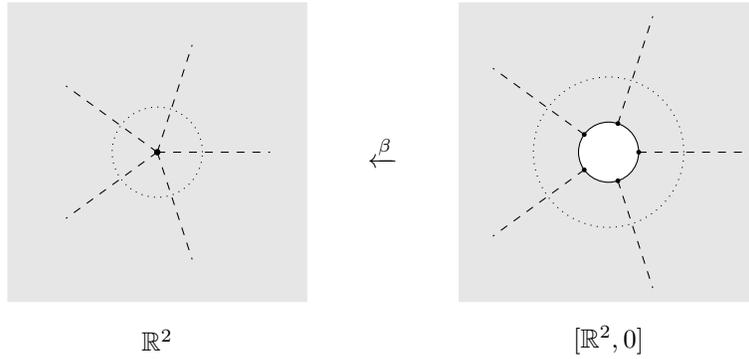

%%%%%%%%%%%%% figure: some blow-ups
\begin{figure}
 \centering
\begin{tikzpicture}

\matrix[row sep=.2cm, column sep=1.5cm]
{
% first row: blown-up spaces

% top of first ex.
\draw[fill=gray!20,draw=white]
   (-1,0) rectangle (1,1)
   (\bupr,0) arc (0:180:\bupr);
   
\draw (1,0) -- (\bupr,0) arc (0:180:\bupr) -- (-1,0); 
\draw[dashed] (30:\bupr) -- (30:\bupr+.7);
& 
% top of second ex.
\draw[fill=gray!20,draw=white]
   (0,0) rectangle (1,1)
   (0,0) -- (\bupr,0) arc (0:90:\bupr) -- (0,0);
   
\draw (1,0) -- (\bupr,0) arc (0:90:\bupr) -- (0,1); 
\draw[dashed] (30:\bupr) -- (30:\bupr+.7);

&
% top of third ex.
\begin{scope}[x={(1cm,0)},y={(-.5cm,-.5cm)},z={(0,1cm)}]
 \draw (\bupr,0,0) -- (1,0,0);
 \draw (0,\bupr,0) -- (0,1,0);
 \draw (0,0,\bupr) -- (0,0,1); 
 
  \draw[dashed] (\bupr/1.732,\bupr/1.732,\bupr/1.732) -- (1,1,1);
 %note 1.732 = Länge von (1,1,1)
 
 \draw[canvas is xy plane at z=0] (\bupr,0) arc (0:90:\bupr);
 \draw[canvas is yz plane at x=0] (\bupr,0) arc (0:90:\bupr);
 \draw[canvas is zx plane at y=0] (\bupr,0) arc (0:90:\bupr);
\end{scope}

&
% top of fourth ex.
\begin{scope}[x={(1cm,0)},y={(-.5cm,-.5cm)},z={(0,1cm)}]
 \draw (0,0,\bupr) -- (0,0,1);
 \draw (0,\bupr,0) -- (0,1,0);
 \draw (0,0,\bupr) -- (1,0,\bupr); 
 \draw (0,\bupr,0) -- (1,\bupr,0);
 
 \draw[canvas is yz plane at x=0] (\bupr,0) arc (0:90:\bupr);
 
 \draw[dashed] (.8,\bupr,\bupr) -- (.8,1+\bupr,1+\bupr);

\end{scope}

\\
% second row: arrows for blow-down maps
\node at (0,0) {$\downarrow$};
&
\node at (0.5,0) {$\downarrow$};
&
\node at (0,0) {$\downarrow$};

&
\node at (0,0) {$\downarrow$};
\\
% third row: spaces

% bottom of first ex.
\draw[fill=gray!20,draw=white]
   (-1,0) rectangle (1,1);

\draw (1,0)  -- (-1,0); 
\draw[dashed] (30:0) -- (30:.7);

\draw[fill] (0,0) circle (1pt);
&
% bottom of second ex.
\draw[fill=gray!20,draw=white]
   (0,0) rectangle (1,1);
   
\draw (1,0) -- (0,0) -- (0,1); 
\draw[dashed] (30:0) -- (30:.7);

\draw[fill] (0,0) circle (1pt);
&
% bottom of third ex.
\begin{scope}[x={(1cm,0)},y={(-.5cm,-.5cm)},z={(0,1cm)}]
 \draw (0,0,0) -- (1,0,0);
 \draw (0,0,0) -- (0,1,0);
 \draw (0,0,0) -- (0,0,1); 
 
 \draw[dashed] (0,0,0) -- (1,1,1);
 
 \draw[fill] (0,0,0) circle (1pt);
\end{scope}

&
% bottom of fourth ex.
\begin{scope}[x={(1cm,0)},y={(-.5cm,-.5cm)},z={(0,1cm)}]
 \draw (0,0,0) -- (1,0,0);
 \draw (0,0,0) -- (0,1,0);
 \draw (0,0,0) -- (0,0,1); 
 
 \draw[very thick] (0,0,0) -- (1,0,0);
 
 \draw[dashed] (.8,0,0) -- (.8,1,1);
\end{scope}

\\ 
\node at (0,0) {(a)}; & 
\node at (0.5,0) {(b)}; & 
\node at (0,0) {(c)}; & 
\node at (0,0) {(d)};
\\
};
\end{tikzpicture}
\caption{Some examples of blow-up; 
in each bottom picture the submanifold being blown up is drawn fat, and the blown-up space is in the top picture. The vertical arrow is the blow-down map. The third and fourth example are 3-dimensional, and only the edges are drawn.}
\label{fig:blow-up examples}
\end{figure}
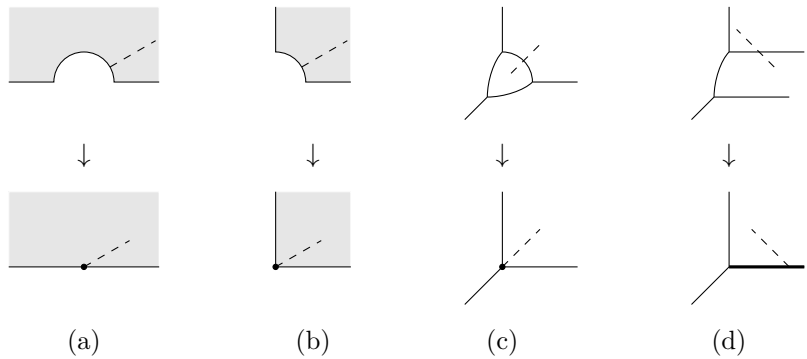

\subsubsection{The idea}
\label{subsubsec:b-up idea}
%%%%%%%% put blow-up figure here
We first explain the idea in the case of blowing up the point 0 in $\R^2$, see Figure \ref{fig:def blow-up}:
Consider the set of rays (half lines) in $\R^2$ emanating from 0. They are pairwise disjoint except that they all share the common endpoint 0. 
{\em The \defin{blow-up of 0 in $\R^2$} is the space constructed from $\R^2$ by removing 0 and replacing it by one separate endpoint for each ray.} This space is denoted by $[\R^2,0]$. 
So we replace 0 by a circle, and each point on the circle corresponds to a direction of approach to 0. This circle is called the \defin{front face} of the blow-up.
Thus, blowing-up 0 in $\R^2$ means taking out $0$ from $\R^2$ and then choosing a new  \lq compactification at 0\rq\ of $\R^2\setminus0$, by adding the front face instead of $0$.
\medskip

Here is a concrete mathematical model realizing this idea: 
As a space we take $[\R^2,0] = \R_+\times S^1$, where $S^1=\{\omega\in\R^2:|\omega|=1\}$ is the unit circle.
The front face is $\ff := \{0\}\times S^1$. In Figure \ref{fig:def blow-up} the rays are the sets $\omega=$ const, the unbroken circle is $\ff$ and the dotted circle is $\{1\}\times S^1$.
We then need to specify how points of $[\R^2,0]$ correspond to points of $\R^2$. This is done using the map
$$\beta:[\R^2,0]\to\R^2,\quad \beta (r,\omega) = r\omega$$
called the \defin{blow-down map.}
Note that $\beta$ is a diffeomorphism from $(0,\infty)\times S^1$ to $\R^2\setminus\{0\}$; this means that it provides an identification of $[\R^2,0]\setminus\ff$ with $\R^2\setminus\{0\}$. 
The sets $\omega=$ const are mapped to rays, and two different such sets have different endpoints on $\ff$. All these endpoints are mapped to 0 by $\beta$.
%
%On the other hand, 
%it maps $\ff$ to 0 -- reflecting the fact that each point of $\ff$ corresponds to the endpoint at $0$ of a ray in $\R^2$.
Thus, this model and $\beta$ do precisely what they were supposed to do.
%: $[\R^2,0]$ is the disjoint union of all the open rays in $\R^2$ emanating from 0, with a separate endpoint for each ray added.

In addition, the model gives $[\R^2,0]$ a differentiable structure, making it a smooth manifold with boundary and $\beta$ a smooth map.
\smallskip

Note that if we parametrize $S^1$ by $\omega=(\cos\varphi,\sin\varphi)$ then $\beta$ is just the polar coordinates map 
\begin{equation}
\label{eqn:polar coord}
 (r,\varphi)\mapsto(x,y),\quad
 x= r\cos\varphi,\ y=r\sin\varphi\,.
\end{equation}
Recall that \lq polar coordinates on $\R^2$\rq\ are not coordinates at the origin. So $[\R^2,0]$ is the space on which polar coordinates are actual coordinates -- also at $r=0$. 

\begin{exercise}
 Show that points of $\ff$ correspond to directions at 0 not only of rays, but of any regular curve. That is: Let $\gamma:[0,1)\to\R^2$ be a smooth curve with $\gamma(0)=0$, $\dot\gamma(0)\neq0$ and $\gamma(t)\neq0$ for $t\neq0$. Show that there is a unique smooth curve $\tilde\gamma:[0,1)\to[\R^2,0]$ lifting $\gamma$, i.e.\ satisfying $\beta\circ\tilde\gamma=\gamma$, and that $\tilde\gamma(0) = \frac{\dot\gamma(0)}{\|\dot\gamma(0)\|}$.
\end{exercise}

\subsubsection{Definition and examples}
\label{subsubsec:b-up def}
The general operation of blow-up 
associates to any manifold $X$ and submanifold $Y\subset X$ a manifold with boundary, denoted $[X,Y]$, and a surjective smooth map $\beta:[X,Y]\to X$. 
We say that $[X,Y]$ is obtained from \defin{blowing up $Y$ in $X$} and call $\beta$ the \defin{blow-down map}.\footnote{For this to be defined $Y$ must have codimension at least one. We will always assume that the codimension is at least two, the other case being less interesting.}
$X,Y$ may also be manifolds with corners, then a local product assumption (see below) must be placed on $Y$, and $[X,Y]$ is a manifold with corners.
The preimage $\beta^{-1}(Y)$ is called the {\bf front face} $\ff$ of the blow-up. It is a boundary hypersurface of $[X,Y]$, and $\beta$ maps diffeomorphically $[X,Y]\setminus\ff\to X\setminus Y$. See Figure \ref{fig:blow-up examples} for some examples.

To define blow-up we use local models as in the previous subsection, but you should always keep the original idea of adding endpoints of rays in mind. 
We start with blow-up of an interior point, then  generalize this in two ways:  blow-up of a point on the boundary, and blow-up of a subspace (by taking products). Finally both generalizations are combined to yield the most general case.
\medskip

\noindent{\bf Definition of blow-up for the local models.}

Recall that a {\em model space} is a space of the form $\R^{n-k}\times\R_+^k$ (or $\R_+^k\times\R^{n-k}$). We consider these first.
\begin{enumerate}
 \item
 Blow-up of\footnote{To simplify notation we often write $0$ instead of $\{0\}$. Also $0$ denotes the origin in any $\R^k$.} $0$ in $\R^n$: Define 
$$ [\R^n,0] := \R_+\times S^{n-1}, \quad \beta(r,\omega)=r\omega$$
where $S^{n-1}=\{\omega\in\R^n:|\omega|=1\}$ is the unit sphere, $n\geq1$.

Note that $[\R^n,0]$ is a manifold with boundary.
  \item
Blow-up of $0$ in the upper half plane $\R\times\R_+$: Define
  $$[\R\times\R_+,0] := \R_+\times S^{1}_+,\ 
  \beta(r,\omega) = r\omega $$
  where $S^{1}_+ =S^{1}\cap (\R\times\R_+)$ is the upper half circle. See Figure \ref{fig:blow-up examples}(a).
  
  This is simply the upper half of case (1) with $n=2$. 
 This generalizes in an obvious way to the blow-up of zero in any model space:
 $$[\R^{n-k}\times\R_+^k,0] := \R_+\times S^{n-1}_k, \ \beta(r,\omega)=r\omega$$
 where $S^{n-1}_k:=S^{n-1}\cap (\R^{n-k}\times\R_+^k)$.
See Figure \ref{fig:blow-up examples}(b) for $n=k=2$ and Figure \ref{fig:blow-up examples}(c) for $n=k=3$.

Note that $[\R^{n-k}\times\R_+^k,0]$ has corners if $k\geq1$.
 \item
Blow-up of the $x$-axis $\R\times\{0\}$ in $\R^3$:
Define
  $$[\R^3,\R\times\{0\}]:=\R\times[\R^2,0],\ \beta(x,y,z) = (x,\beta_0(y,z))$$
  with $\beta_0:[\R^2,0]\to\R^2$ from case (1).
  So the line $Y=\R\times\{0\}$ is blown up to a cylinder, the front face of this blow-up.  
  Any point $p\in Y$ is blown up to a circle $\beta^{-1}(p)$. Points on the front face correspond to a pair consisting of a point $p\in Y$ and a direction of approach to $p$, modulo directions tangential to $Y$.

This generalizes in an obvious way to the blow-up of $\R^{n-m}\times\{0\}$ in $\R^n$:
$$[\R^n,\R^{n-m}\times\{0\}] = \R^{n-m}\times[\R^m,0]$$
(write $\R^n=\R^{n-m}\times\R^m$ and take out the common factor $\R^{n-m}$).
\item
Blow-up of $\R_+\times\{0\}$ in $\R_+^3$. Combining cases (2) and (3) we define
$$ [\R_+^3,\R_+\times\{0\}] = \R_+\times[\R_+^2,0]$$
see Figure \ref{fig:blow-up examples}(d).

The main point here is the product structure. In general, for model spaces $X,W,Z$,
\begin{equation}
 \label{eqn:model b-up}
\text{for } X=W\times Z,\ Y=W\times\{0\}\quad\text{define}\quad [X,Y]= W\times[Z,0] 
\end{equation}
with $[Z,0]$ defined in (2). In the example, $X=\R_+^3$, $W=\R_+$, $Z=\R_+^2$.
\end{enumerate}

\noindent{\bf Definition of blow-up for manifolds (possibly with corners).}
It can be shown (see \cite{Mel:RB}, \cite{Mel:DAMWC}) that these constructions are invariant in the following sense: for model spaces $X,Y$ as in case (4), any self-diffeomorphism of $X$ fixing $Y$ pointwise lifts to a unique self-diffeomorphism of $[X,Y]$.\footnote{In the case of $[\R^2,0]$ this can be rephrased as follows: let $x,y$ be standard cartesian coordinates and $r,\varphi$ corresponding polar coordinates. 
Let $x',y'$ be some other coordinate system defined near 0 (possibly non-linearly related to $x,y$), with $x'=y'=0$ corresponding to the point $0$. Define polar coordinates in terms of $x',y'$, i.e. $x'=r'\cos\varphi'$, $y'=r'\sin\varphi'$. Then $(r,\varphi)\mapsto(r',\varphi')$ is a smooth coordinate change on $[\R^2,0]$. 

It is in this sense that blow-up is a coordinate free way of introducing polar coordinates: the result does not depend on the (cartesian) coordinates chosen initially.
This is important, for example, for knowing that we may choose coordinates at our convenience. For example, when doing an iterated blow-up we may choose projective coordinates after the first blow-up, or polar coordinates, and will get the same mathematics in the end.
}  
Now if $X$ is a manifold and $Y\subset X$ a submanifold, then $Y\subset X$ is locally $\R^{n-m}\times\{0\}\subset\R^n$, in suitable coordinates. Therefore, the blow-up $[X,Y]$ is well-defined as a manifold, along with the blow-down map $\beta:[X,Y]\to X$.%
\footnote{The original idea that points on the front face correspond to directions at 0 can be used directly as an invariant definition: Let $M$ be a manifold and $p\in M$. The set of directions at $p$ is $S_pM := (T_pM\setminus\{0\})/\R_{>0}$ where $T_pM$ is the tangent space and $\R_{>0}$ acts by scalar multiplication.
Then $[M,p] = (M\setminus\{p\}) \cup S_pM$, with $\beta$ the identity on $M\setminus\{p\}$ and mapping $S_pM$ to $p$. One still needs local coordinates to define the differentiable structure on $[M,p]$.}

If $X$ is a manifold with corners then a subset $Y\subset X$ is called a \defin{p-submanifold} if it is everywhere locally like the models \eqref{eqn:model b-up} (p is for product). Therefore, the blow-up $[X,Y]$ is defined for p-submanifolds $Y\subset X$.
For example, the fat subsets in the bottom line of Figure \ref{fig:blow-up examples} are p-submanifolds, as are the dashed rays in the top line. However, the dashed rays in (b), (c) and (d) in the bottom line are not p-submanifolds.

Put differently, a subset $Y\subset X$ is a p-submanifold if near every $q\in Y$ there are local coordinates centered at $q$ so that $Y$ and every face of $X$ containing $q$ is a coordinate subspace, i.e. a linear subspace spanned by some coordinate axes, locally. 

The preimage $\beta^{-1}(Y)\subset[X,Y]$ is a boundary hypersurface of $[X,Y]$, called the \defin{front face} of the blow-up. The other boundary hypersurfaces of  $[X,Y]$ are in 1-1 correspondence with those of $X$.

\begin{remark}
\label{rem:blow-up alg geom}
This notion of blow-up, sometimes called \defin{oriented blow-up}, is closely related to (unoriented) blow-up as defined in real algebraic geometry, where one \lq glues in\rq\ a real projective space instead of a sphere. Unoriented blow-up can be obtained from oriented blow-up by identifying pairs of antipodal points of this sphere. This results in an interior hypersurface (usually called exceptional divisor) rather than a new boundary hypersurface as front face. Compare Remark \ref{rem:mwc divisors}.

Unoriented blow-up has the virtue of being definable purely algebraically, so it extends to other ground fields, e.g. to complex manifolds. See \cite{Har:AG}, where also a characterization of blow-up by a universal property is given (Proposition 7.14). 
\end{remark}
\subsubsection{Multiple blow-ups}
Due to the geometric nature of the blow-up operation, it can be iterated. So if $X$ is a manifold with corners and $Y$ a p-submanifold, we can first form the blow-up $\beta_1:[X,Y]\to X$. Next, if $Z$ is a p-submanifold of $[X,Y]$ then we can form the blow-up $\beta_2:[[X,Y],Z]\to [X,Y]$. The \defin{total blow-down map} is then the composition
$$ \beta=\beta_1\circ\beta_2 : [[X,Y],Z]\to X\,.$$
See Figure \ref{fig:double blow up} for a simple example.
Of course one may iterate any finite number of times. 

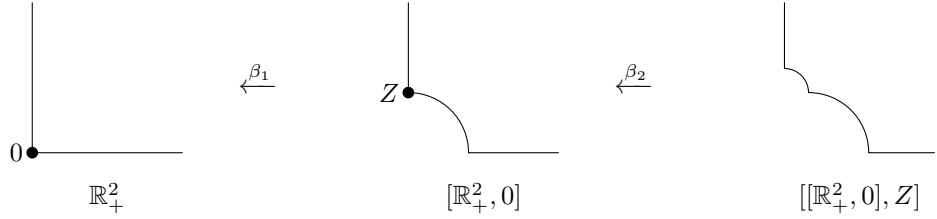
\begin{figure}
\centering 
\begin{tikzpicture}
 
\begin{scope}[scale=2]
 \draw (1,0) -- (0,0) -- (0,1);
 \draw[fill] (0,0) circle (1pt);
 
 \node at (.5,-.3){$\R^2_+$};
 \node at (0,0)[left]{$0$};
\end{scope}
 
 \node at (3,1) {$\xleftarrow{\beta_1}$};
 
\begin{scope}[xshift=5cm,scale=2]
 \draw (1,0) -- (\bupr,0) arc (0:90:\bupr) -- (0,1);
 \draw[fill] (0,\bupr) circle (1pt);
 
 \node at (.5,-.3){$[\R^2_+,0]$};
 \node at (0,\bupr)[left]{$Z$};
\end{scope}

 \node at (8,1) {$\xleftarrow{\beta_2}$};
 
\begin{scope}[xshift=10cm,scale=2]
 \draw (1,0) -- (1.4*\bupr,0) arc (0:90:\bupr) arc (0:90:.4*\bupr) -- (0,1);
 
 \node at (.5,-.3){$[[\R^2_+,0],Z]$};
\end{scope}
  
\end{tikzpicture}
\caption{A double blow-up}
\label{fig:double blow up}
\end{figure}

\subsubsection{Resolutions via blow-up}
The main use of blow-ups is that they can be used to resolve singular objects, for example functions and sets. 

\begin{definition}[Resolving functions]
\label{def:resol functions}
Let $\beta:X'\to X$ be a (possibly iterated) blow-down map of manifolds with corners and $f:\interior X\to\C$ a function. We say that \defin{$f$ is resolved by $\beta$} if $\beta^*f$ is a polyhomogeneous function on $X'$. Here $\beta^*f:=f\circ\beta$ is the pull-back.  
\end{definition}
Recall that
$\beta^*f$ need only be defined on the interior of $X'$, compare Remark \ref{rem:functions int}.
In Example \ref{ex:proj coords resol} we will see that the function $f(x,y)=\frac x{x+y}$ on $\R^2_+\setminus\{0\}$ is resolved by blowing up zero.

For subsets we need a slight generalization of p-submanifolds.
A \defin{d-submanifold} of a manifold with corners $X$ is a subset $Y\subset X$ which is everywhere locally modelled on
\begin{equation}
 \label{eqn:d-submanifold}
 X = W\times Z\times\R^l\,,\quad Y = W\times\{0\}\times\R^l_+ 
\end{equation}
for some $l\geq0$ and model spaces $W,Z$ (d means decomposable). This is a p-submanifold iff $l=0$, see \eqref{eqn:model b-up}.
For example, $\R_+^2\subset\R^2$ is a d-submanifold which is not a p-submanifold.
\begin{definition}[Resolving subsets]
\label{def:resol subsets}
Let $\beta:X'\to X$ be a (possibly iterated) blow-down map of manifolds with corners and $S\subset X$ a subset. We say that \defin{$S$ is resolved by $\beta$} if $\beta^*S$ is a d-submanifold of $X'$. Here the \defin{lift}\footnote{The lift is also called the \defin{strict transform} in the algebraic geometry literature.} $\beta^*S$  under a blow-down map $[X,Y]\to X$ is defined as
$$ \beta^*S = \overline{\beta^{-1}(S\setminus Y)}\ \text{ if }S\not\subset Y,\quad \beta^*S = \beta^{-1}(S)\ \text{ if }S\subset Y.$$ 
For an iterated blow-down map $\beta=\beta_1\circ\dots\circ\beta_k$ we define $\beta^*S = \beta_k^*\dots\beta_1^*S$.
\end{definition}
For example, the solid cone $S\subset\R^3$ (left picture in Figure \ref{fig:mwc non-examples}) is resolved by blowing up $0$ in $\R^3$.
Here  $\beta^*S\subset[\R^3,0]$ is a manifold with corners, the local model at the corner is \eqref{eqn:d-submanifold} with $W=\R\times\R_+$, $Z=\{0\}$ and $l=1$. The boundary of the cone is also resolved by $\beta$, its lift is even a p-submanifold.

Note that in general the lift $\beta^*S$ is almost the preimage, but not quite.
In the cone example, the preimage $\beta^{-1}S$ would be the union of $\beta^*S$ and the front face of the blow-up, which is a 2-sphere. We consider $\beta^*S$ since it contains the only interesting information about $S$.
 
See Figure \ref{fig:proj coords examples}(d) for another example (dashed lines) and Figure \ref{fig:blow-up} for an example of a resolution by a multiple blow-up. Both of them will be used later.

Of course we can combine Definitions \ref{def:resol subsets} and \ref{def:resol functions}: If $S\subset X$ then a function $f$ on $S\cap\interior X$ is resolved by $\beta:X'\to X$ if $S$ is resolved and $\beta^*f$ is polyhomogeneous on $\beta^*S$.
\medskip

Note that in these definitions we consider polyhomogeneous functions and d-submanifolds as \lq regular\rq\ and more general functions resp.\ subsets as \lq singular\rq. Regular objects in this sense remain regular after blow-up, as is easy to see using projective coordinates, introduced below.\footnote{For a d-submanifold $S\subset X$ to lift to a d-submanifold under blow-up of $Y\subset X$ we must require that $S$ and $Y$ intersect {\em cleanly} (which might be called \lq normal crossings\rq\ by algebraic geometers), i.e.\ near every intersection point there are coordinates in which $X$, $S$ and $Y$ are given by model spaces.}

\begin{remark}
 By a deep famous theorem of Hironaka every algebraic variety $S\subset\C P^n$ can be resolved by a sequence of blow-ups (in the algebraic geometric sense, see Remark \ref{rem:blow-up alg geom}). Similar statements hold for algebraic (or even semi- or subalgebraic) subsets of $\R^n$, see \cite{Hir:RSAVFCZ} and \cite{Hau:HTRSPAWU} for a more entertaining and low-tech survey.
\end{remark}
\begin{remark}
 There is a generalization of blow-up which is sometimes useful when resolving several scales simultaneously, see \cite{Joy:GMWC}, \cite{KotMel:GBCFP}.
\end{remark}
%%%%%%%%%%%%%%%%%%%%%%%%%%%%%
\subsubsection{Projective coordinates}
%%%%%%%%%% figure: projective coordinates: exampes 2-4 of previous figure
\begin{figure}
\centering
\begin{tikzpicture}[scale=1.7]
  \pgfgettransform\mytrafo  % trick to make scaling work with matrix, so text is not scaled; also see third argument in matrix command; see http://tex.stackexchange.com/questions/149712/scaling-inside-a-tikz-matrix-does-not-work

\def\xmin{.55}  %left end of singular subset
\def\xmax{.95} % right end of singular subset
\def\ymax{.5}
    
\matrix[row sep=.2cm, column sep=1.5cm,execute at begin cell=\pgfsettransform\mytrafo]
{

% first row: blown-up spaces

% top of second ex.
\draw[fill=gray!20,draw=white]
   (0,0) rectangle (1,1)
   (0,0) -- (\bupr,0) arc (0:90:\bupr) -- (0,0);
   
\draw (1,0) -- (.9,0) node[below]{$\rf$} -- (\bupr,0) arc (0:90:\bupr) --
          (0,.9) node[left]{$\lf$} -- (0,1); 

\draw (.7*\bupr,.7*\bupr) node[below left]{$\ff$};

% alternative: draw proj. coord systems with separate small axes
%\begin{scope}[shift={(\bupr+\off,\off)}]  % draw small coord.system
% \draw[->] (0,0) -- (\pcaxis,0) node[right=-2pt]{\tiny $x$};
% \draw[->] (0,0) -- (0,\pcaxis) node[right=-2pt]{\small $\frac yx$};
%\end{scope}
%
%\begin{scope}[shift={(\off,\bupr+\off)}]  % draw small coord.system
% \draw[->] (0,0) -- (\pcaxis,0) node[right=-2pt]{\small $\frac xy$};
% \draw[->] (0,0) -- (0,\pcaxis) node[right=-2pt]{\tiny $y$};
%\end{scope}

\begin{scope}[shift={(\bupr,0)}]  % draw small coord.system
 \draw[->] (0,0) -- (\pcaxis,0) node[above right=-1pt and -4pt]{$\scriptstyle x$};
 \draw[->] (0,0) arc (0:30:\bupr) node[right=-1pt]{$\frac yx$};
\end{scope}

\begin{scope}[shift={(0,\bupr)}]  % draw small coord.system
 \draw[->] (0,0) arc (90:60:\bupr) node[above]{$\frac xy$};
 \draw[->] (0,0) -- (0,\pcaxis) node[left]{$\scriptstyle y$};
\end{scope}

&
% top of third ex.
\begin{scope}[x={(1cm,0)},y={(-.5cm,-.5cm)},z={(0,1cm)}]
 \draw (\bupr,0,0) -- (1,0,0);
 \draw (0,\bupr,0) -- (0,1,0);
 \draw (0,0,\bupr) -- (0,0,1);

 \draw[canvas is xy plane at z=0] (\bupr,0) arc (0:90:\bupr);
 \draw[canvas is yz plane at x=0] (\bupr,0) arc (0:90:\bupr);
 \draw[canvas is zx plane at y=0] (\bupr,0) arc (0:90:\bupr);

%%% add proj coords with x dominant
% \coordinate (A) at (\bupr,0,0);
% \draw[->] (A) -- (\bupr+\pcaxis,0,0) node[above right=-1pt]{\tiny$x$};
% \draw[->,canvas is xz plane at y=0] (A) arc (0:30:\bupr) node[right=-1pt]{\small$\frac zx$};
% \draw[->,canvas is xy plane at z=0] (A) arc (0:30:\bupr) node[below=-1pt]{\small$\frac yx$};

%% add proj coords with z dominant
 \coordinate (A) at (0,0,\bupr);
 \draw[->] (A) -- (0,0,\bupr+\pcaxis) node[above right=-1pt]{$\scriptstyle z$};
 \draw[->,canvas is xz plane at y=0] (A) arc (90:60:\bupr) node[right=-1pt,yshift=4pt]{\small$\frac xz$};
 \draw[->,canvas is yz plane at x=0] (A) arc (90:60:\bupr) node[left=-1pt]{\small$\frac yz$};
 
%\begin{scope}[shift={(2*\off,2*\off,\bupr)}]  % draw small coord.system, here a little higher for better visibility
% \draw[->] (0,0,0) -- (\pcaxis,0,0) node[right=-2pt]{\small $\frac xz$};
% \draw[->] (0,0,0) -- (0,\pcaxis,0) node[left=-2pt]{\small $\frac yz$};
% \draw[->] (0,0,0) -- (0,0,\pcaxis) node[right=-2pt]{\tiny $z$};
%\end{scope}

\end{scope}

&
% top of fourth ex.
\begin{scope}[x={(1cm,0)},y={(-.5cm,-.5cm)},z={(0,1cm)}]
 \draw (0,0,\bupr) -- (0,0,1);
 \draw (0,\bupr,0) -- (0,1,0);
 \draw (0,0,\bupr) -- (1,0,\bupr); 
 \draw (0,\bupr,0) -- (1,\bupr,0);
 
 \draw[canvas is yz plane at x=0] (\bupr,0) arc (0:90:\bupr);

%% add proj coords with z dominant
 \coordinate (A) at (0,0,\bupr);
 \draw[->] (A) -- (0,0,\bupr+\pcaxis) node[above right=-1pt]{$\scriptstyle z$};
 \draw[->] (A)  -- (\pcaxis,0,\bupr) node[yshift=4pt]{$\scriptstyle x$};
 \draw[->,canvas is yz plane at x=0] (A) arc (90:60:\bupr) node[left=-1pt]{\small$\frac yz$};

% add resolved subset
\def\bupR{.9} 
\def\yR{\ymax} 

\pgfmathsetmacro{\alpha}{atan(\ymax/\bupR)} % tan alpha = ymax/bupR 

%y,z-coordinates at front face:
\pgfmathsetmacro{\yff}{\bupr*sin(\alpha)} % bupr* sin alpha
\pgfmathsetmacro{\zff}{\bupr*cos(\alpha)}  % bupr* cos alpha

 \foreach \x in{\xmin,\xmax}
 \draw[dashed,canvas is yz plane at x=\x] 
 (0,\bupr) arc (90:90-\alpha:\bupr) -- (\yR,\bupR) -- (0,\bupR) -- cycle;

 \draw[dashed] (\xmin,\yff,\zff) -- (\xmax,\yff,\zff);
 \draw[dashed] (\xmin,\yR,\bupR) -- (\xmax,\yR,\bupR);
 \draw[dashed] (\xmin,0,\bupR) -- (\xmax,0,\bupR);

\end{scope}

\\
% second row: arrows for blow-down maps
\node at (0.5,0) {$\downarrow$};
&
\node at (0,0) {$\downarrow$};

&
\node at (0,0) {$\downarrow$};
\\
% third row: spaces

% bottom of second ex.
\draw[fill=gray!20,draw=white]
   (0,0) rectangle (1,1);

\draw[->] (0,0) -- (1,0) node[right]{$x$};    
\draw[->] (0,0) -- (0,1) node[right]{$y$}; 

\draw[fill] (0,0) circle (1pt);
&
% bottom of third ex.
\begin{scope}[x={(1cm,0)},y={(-.5cm,-.5cm)},z={(0,1cm)}]
 \draw[->] (0,0,0) -- (1,0,0) node[right]{$x$};
 \draw[->] (0,0,0) -- (0,1,0) node[right=+1pt]{$y$};
 \draw[->] (0,0,0) -- (0,0,1) node[right]{$z$};

 \draw[fill] (0,0,0) circle (1pt);
\end{scope}

&
% bottom of fourth ex.
\begin{scope}[x={(1cm,0)},y={(-.5cm,-.5cm)},z={(0,1cm)}]
 \draw[->] (0,0,0) -- (1,0,0) node[right]{$x$};
 \draw[->] (0,0,0) -- (0,1,0) node[right=+1pt]{$y$};
 \draw[->] (0,0,0) -- (0,0,1) node[right]{$z$}; 
 
 \draw[very thick] (0,0,0) -- (1,0,0);
 
% singular subset
 \draw[dashed] (\xmin,0,0) -- (\xmin,\ymax,.7) -- (\xmin,0,.7) -- cycle;
 \draw[dashed] (\xmax,0,0) -- (\xmax,\ymax,.7) -- (\xmax,0,.7) -- cycle;
 \draw[dashed] (\xmin,\ymax,.7) -- (\xmax,\ymax,.7);
 \draw[dashed] (\xmin,0,.7) -- (\xmax,0,.7); 

\end{scope}

\\ 
\node at (0,0) {(b)}; & 
\node at (0,0) {(c)}; & 
\node at (0,0) {(d)};
\\
};

\end{tikzpicture}
\caption{Projective coordinates for examples (b), (c), (d) of Figure \ref{fig:blow-up examples}. Dashed lines in (d) indicate a singular subset (below) and its resolution (above).}
\label{fig:proj coords examples}
\end{figure}
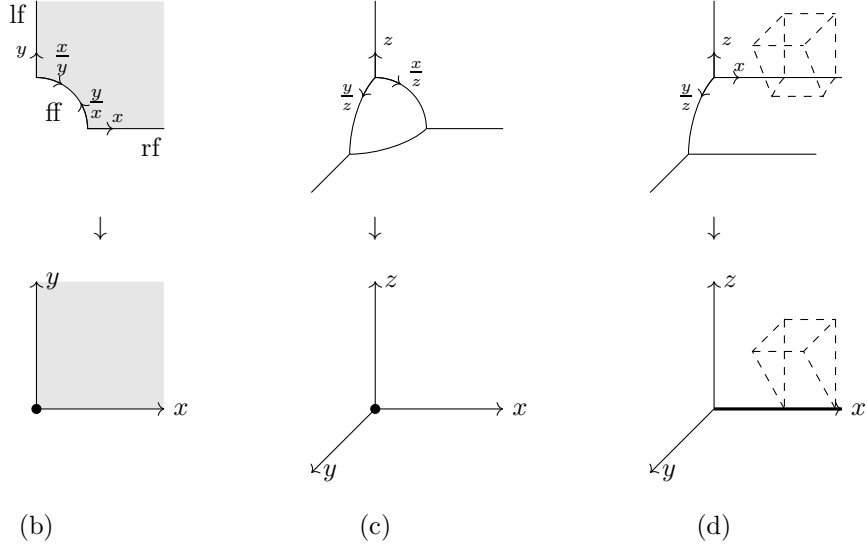

Projective coordinates simplify calculations with blow-ups
 and also provide the link of blow-ups to the discussion of scales.

We first discuss this for the space $[\R^2_+,0]$, see Figure \ref{fig:proj coords examples}(b).
Recall that points of $[\R^2_+,0]$ correspond to pairs consisting of a ray (in $\R^2_+$, emanating from 0) and a point on that ray.
Now, with $x,y$ standard coordinates on $\R^2_+$,
$$ \text{rays }\leftrightarrow \text{ values of }\frac yx,
\qquad \text{points on a ray }\leftrightarrow\text{ values of }x$$
except if the ray is the $y$-axis (which would correspond to $\frac yx=\infty$). Here $\frac yx\geq0$ and $x\geq0$, and $x=0$ is the endpoint of the ray.

This means that $\frac yx$ and $x$ provide a coordinate system for $[\R^2_+,0]\setminus\lf$, where $\lf$ (\lq left face\rq) is the lift of the $y$-axis:\footnote{It would be formally better to write $\frac{\beta^*y}{\beta^*x}$ and $\beta^*x$ instead of $\frac yx$ and $x$, but this quickly becomes cumbersome. Note that $\beta^*x$ vanishes on $\lf\cup\ff$ and $\beta^*y$ vanishes on $\ff\cup\rf$.}
\begin{equation}
 \label{eqn:proj coord}
 (x,\frac yx):[\R_+^2,0]\setminus\lf \to \R^2_+
\end{equation}
\begin{quote}
 \small
 We need to check that this is a smooth coordinate system. This means:
 
\begin{enumerate}
 \item[1.] The function $\frac yx$, which is defined and smooth on $[\R_+^2,0]\setminus(\lf\cup\ff)$, extends smoothly to $[\R_+^2,0]\setminus\lf $.
 \item[2.] The map \eqref{eqn:proj coord} is a diffeomorphism. 
\end{enumerate}
Both statements refer to the differentiable structure on $[\R^2_+,0]$, which was defined by writing $[\R^2_+,0] = \R_+\times S^1_{++}$ where $S^1_{++}$ is the quarter circle. If we use the angle coordinate $\varphi\in[0,\frac\pi2]$ on $S^1_{++}$ then we need to check that the map $(r,\varphi)\to (x,\frac yx)$ extends smoothly from $r>0,\varphi<\frac\pi2$ to $r\geq0,\varphi<\frac\pi2$ and is a diffeomorphism $\R_+\times[0,\frac\pi2)\to\R_+\times\R_+$. This can be seen from the explicit formulas
 $x=r\cos\varphi$, $\frac yx=\tan\varphi$, and for the inverse map
 $r=x\sqrt{1+\left(\frac yx\right)^2}$, $\varphi=\arctan\frac yx$.
\end{quote}

By symmetry, we have another smooth coordinate system
given by $\frac xy$ and $y$ on the set $[\R^2_+,0]\setminus \rf$, where $\rf$ (\lq right face\rq) is the lift of the $x$-axis.

Note that  in the coordinate system $x,\frac yx$
 the boundary defining function of the front face is $x$, and  in the coordinate system $y,\frac xy$ it is $y$.

Projective coordinates can be used to check that a function is resolved under a blow-up:
\begin{lemma}
\label{lem:proj coord phg}
  A function $f$ on $\R_{>0}^2$ is resolved by the blow up of $0$ if and only if $f$ is polyhomogeneous as a function of $\frac xy,y$ and as a function of $x,\frac yx$.
\end{lemma}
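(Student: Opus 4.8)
The plan is to realize $[\R^2_+,0]$ as the union of the two projective coordinate charts discussed just above the lemma, each a copy of $\R^2_+$, in which the pulled-back function $\beta^*f$ becomes precisely $f$ expressed in the corresponding projective variables; the lemma then follows from the fact that polyhomogeneity is a local, coordinate-invariant notion. So I would first fix notation: $[\R^2_+,0]$ has the front face $\ff$ and the lifts $\lf,\rf$ of the $y$- and $x$-axes as its three boundary hypersurfaces, with $\lf\cap\rf=\emptyset$ and corners $\ff\cap\lf$, $\ff\cap\rf$. Put $U_1:=[\R^2_+,0]\setminus\lf$ and $U_2:=[\R^2_+,0]\setminus\rf$. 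These are open manifolds with corners, $U_1\cup U_2=[\R^2_+,0]$ (indeed $\ff=(\ff\setminus\lf)\cup(\ff\setminus\rf)$), while $\rf\subset U_1$ and $\lf\subset U_2$. By the discussion preceding the lemma, $(x,\tfrac yx)$ is a diffeomorphism $U_1\to\R^2_+$ taking $\ff$ to $\{x=0\}$ and $\rf$ to $\{\tfrac yx=0\}$, and $(\tfrac xy,y)$ is a diffeomorphism $U_2\to\R^2_+$ taking $\ff$ to $\{y=0\}$ and $\lf$ to $\{\tfrac xy=0\}$. Since $\beta(r,\omega)=r\omega$, a one-line computation in these coordinates shows that under the first identification $\beta^*f$ corresponds to $(s,t)\mapsto f(s,st)$, i.e.\ to ``$f$ as a function of $x$ and $\tfrac yx$'', and under the second to ``$f$ as a function of $\tfrac xy$ and $y$''.

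Next I would use that the restriction of a polyhomogeneous function to an open sub-manifold-with-corners is polyhomogeneous, and conversely that polyhomogeneity is checked near each boundary hypersurface in an arbitrary trivialization. If $f$ is resolved, i.e.\ $\beta^*f\in\calA^\calE([\R^2_+,0])$ for some index family $\calE$, then restricting to $U_1$ and to $U_2$ and transporting along the two diffeomorphisms shows exactly that $f$ is polyhomogeneous in the variables $x,\tfrac yx$ and in the variables $\tfrac xy,y$. Conversely, assume $f$ is polyhomogeneous in both projective coordinate systems; transporting back, $\beta^*f$ is polyhomogeneous on $U_1$, say with index set $C$ at $\ff$ and $D$ at $\rf$, and on $U_2$, say with index set $A$ at $\lf$ and $B$ at $\ff$. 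Since $U_1$ and $U_2$ cover $[\R^2_+,0]$, each of $\rf,\lf$ lies in a single chart, and $\ff$ (which is compact) is covered by the two pieces $\ff\setminus\lf\subset U_1$ and $\ff\setminus\rf\subset U_2$, one assembles a global index family by $\calE(\rf)=D$, $\calE(\lf)=A$, and $\calE(\ff)$ the smallest smooth index set containing $C\cup B$ (recall $\calA^{C}\subset\calA^{\calE(\ff)}$ and $\calA^{B}\subset\calA^{\calE(\ff)}$ once $C,B\subset\calE(\ff)$); then $\beta^*f\in\calA^\calE([\R^2_+,0])$, i.e.\ $f$ is resolved.

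The only genuinely non-formal point is this patching in the converse direction: the $\ff$-expansions coming from the two charts a priori live only over the overlapping open pieces $\ff\setminus\rf$ and $\ff\setminus\lf$, and one must check that they are expansions of the same function and can be recorded by a single index family. This is immediate from uniqueness of asymptotic expansions on the overlap $\ff\setminus(\lf\cup\rf)$ together with the compactness of $\ff$ (and, one dimension down, the analogous statement along $\ff$ for the expansion coefficients). Everything else is the coordinate-invariance of polyhomogeneity for smooth index sets, already recorded in the text, together with the observation that $\{U_1,U_2\}$ is an atlas for $[\R^2_+,0]$.
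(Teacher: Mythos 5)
Your proof is correct and follows essentially the same route as the paper, which disposes of the lemma with the single observation that polyhomogeneity on a manifold with corners means polyhomogeneity in each coordinate system of an atlas, the two projective charts forming such an atlas for $[\R^2_+,0]$. Your extra care about assembling the index family at $\ff$ from the two charts (via a smooth index set containing both and uniqueness of expansions on the overlap) is a legitimate filling-in of detail the paper leaves implicit, not a different argument.
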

This is clear since polyhomogeneity (or smoothness) of a function on a manifold means polyhomogeneity (or smoothmess) in each coordinate system of an atlas.
\begin{example}
\label{ex:proj coords resol}

We consider the function $f(x,y)=\frac x{x+y}$ on $\R^2_+\setminus0$ again.
We saw in Example \ref{ex:phg fcns}\eqref{ex:phg fcn not} that $f$ is not polyhomogeneous at $0$. However, 
\begin{align*}
 \text{in coordinates }X=\frac xy,\ y:\quad & \beta^*f = \frac X{X+1} \\
 \text{in coordinates }x,\ Y=\frac yx:\quad & \beta^*f= \frac1{1+Y}
\end{align*}
and both of these functions are smooth for $(X,y)\in\R^2_+$ resp. $(x,Y)\in\R^2_+$, the respective ranges of these coordinates.
So $f$ is resolved by $\beta$, and $\beta^*f$ is even smooth on $[\R^2_+,0]$. 

As another example, consider $f_2(x,y)=\frac x{x+y+xy}$. Here $\beta^*f_2=\frac X{X+1+Xy}$ and $\beta^*f_2=\frac1{1+Y+xY}$ in the two coordinate systems, so $f_2$ is also resolved by $\beta$. Note that these agree with $\beta^*f$ at $y=0$ and $x=0$ respectively, which means $\beta^*f_2=\beta^*f$ at the front face. This is clear a priori since $xy$ vanishes to second order at $x=y=0$.
\end{example}
\begin{remark}[Relation of projective coordinates to scaled limit]
Suppose a function $f$ on $\R_{>0}^2$ is resolved by $\beta:[\R^2_+,0]\to\R^2_+$, and assume $\beta^*f$ is even smooth.
To emphasize the relation to the discussion of scales, we denote coordinates by $x,h$ and write $f_h(x)=f(x,h)$.
\begin{enumerate}
 \item The rescaled limit $g(X)=\lim_{h\to0} f_h(hX)$ is simply the restriction of $\beta^*f$ to the front face $\ff$, when parametrizing
 $\ff$ by the projective coordinate $X$. 
 
To see this, note that in the projective coordinate system $X,h$ the map $\beta$ is given by $\beta(X,h)=(hX,h)$ (this is the meaning of writing $X=\frac xh$), so $(\beta^*f)(X,h) = f(hX,h)$, and $h=0$ is the front face.
 \item That $f$ is resolved by $\beta$ contains additional information beyond existence of this scaled limit: information on derivatives as well as information on the behavior of $g(X)$ as $X\to\infty$. Note that $X=\infty$ corresponds to the \lq lower\rq\ corner in $[\R^2_+,0]$.
 More precisely, $g$ is smooth at $\infty$ in the sense that $\eta\mapsto g(\frac1\eta)$ is smooth at $\eta=0$. Here $\eta$ is the coordinate $\frac hx$ in the second projective coordinate system.
\end{enumerate}
\end{remark}
\noindent For more general blow-ups it is useful to have:  
% \vspace{-4mm}
\begin{quote} 
{\bf Quick practical guide to finding projective coordinate systems:}

Point blow-up of 0 in  $\R^k_+\times\R^{n-k}$: Near the (lift of the) $x$-axis projective coordinates are $x$ and $\frac{y_j}x$, where $y_j$ are the variables other than $x$.
These are coordinates except on the (lift of the) set $\{x=0\}$. Similarly for any other axis.

Blow-up of coordinate subspace $Y \in\R_+^k\times\R^{n-k}$: Apply the previous to variables $x,y_j$ vanishing on $Y$. Other variables remain unchanged.
\end{quote}
It may be useful to think of $x$ as \lq dominant\rq\ variable on the coordinate patch: for any compact subset of the patch there is a constant $C$ so that $|y_j|\leq Cx$. So $\frac{y_j}x$ is bounded there.
Note:
\begin{quote}
 dominant variable = boundary defining function of front face
\end{quote}

For the examples in Figure \ref{fig:blow-up examples}(a),(c),(d)  we get the projective coordinate systems, see also Figure \ref{fig:proj coords examples}(c),(d) (where only one system is indicated): 
\begin{enumerate}
 \item[(a)] $[\R\times\R_+,0]$:  near the interior of the front face: $y,\frac xy$; in a neighborhood of the lift of the $x$-axis: $x,\frac yx$.\footnote{The latter are really two coordinate patches, one for $x\geq0$ (near right corner) and one for $x\leq0$ (near left corner). Near the left corner it is more customary to use $|x|, \frac y{|x|}$ instead so the dominant variable is positive.}
 \item[(c)]
 $[\R_+^3,0]$: outside the left boundary hypersurface: $x,\frac yx,\frac zx$; outside the back boundary hypersurface: $y,\frac xy,\frac zy$; outside the bottom boundary hypersurface: $z,\frac xz,\frac yz$.
 \item[(d)]
 $[\R_+^3,\R_+\times\{0\}]$: outside the back boundary hypersurface: $x,y,\frac zy$;  outside the bottom boundary hypersurface: $x,z,\frac yz$.
\end{enumerate}

\begin{exercise}
\label{exer:double blow-up}
 Show that the function $f(x,y)=\sqrt{x^2+xy+y^3}$ on $\R^2_+$ is resolved by the double blow-up in Figure \ref{fig:double blow up}, but not by the simple blow-up of $0\in\R^2_+$.\footnote{Solution:
In coordinates $x$, $Y=\frac yx$ the function $\beta_1^*f=x\sqrt{1+Y+xY^3}$ is polyhomogeneous since it is smooth. In coordinates $X=\frac xy$, $y$ the function $\beta_1^*f=y\sqrt{X^2+X+y}$ is polyhomogeneous outside $X=y=0$, but not at this point. 
\\
Therefore we blow up $X=y=0$, which is the point $Z$ in Figure \ref{fig:double blow up}. Let $\beta=\beta_1\circ\beta_2$.  
In coordinates $X$, $\eta=\frac yX$ the function $\beta^*f=X^{3/2}\eta\sqrt{X+1+\eta}$ is polyhomogeneous. In coordinates $\xi=\frac Xy$, $y$ the function $\beta^*f=y^{3/2}\sqrt{\xi^2y+\xi+1}$ is polyhomogeneous. So $f$ is resolved by $\beta$.
 }
\end{exercise}

% falsch: The possibility to use projective coordinates is one reason why it makes sense to think about coordinate invariance of blow-up.
% denn auch die projektiven Koordinaten hängen -- wie die Polarkoordinaten -- von der Wahl kartesischer Koordinaten auf X ab

\subsubsection{Quasihomogeneous blow-up}
\label{subsubsec:blow-up qh}
In many problems scalings other than $x\sim y$ appear, for example $x\sim \sqrt y$ in the function $f(x,y) = \frac 1{x^2+y}$. These can be understood either by multiple blow-ups, as in Exercise \ref{exer:double blow-up}, or by the use of quasihomogeneous blow-up. 
 
This occurs, for example, in Section \ref{sec:adiab variable}, and also for the heat kernel (where $y$ is time), see e.g. \cite{Mel:APSIT}, \cite{Gri:NHKA}, \cite{MazRow:HTAP}. 

For simplicity we only consider the quasihomogeneous blow-up of $0$ in $\R^2_+$, with $x$ scaling like $\sqrt y$. We denote it by $[\R^2_+,0]_q$. This is sometimes called parabolic blow-up.
The idea is analogous to regular blow-up, except that the rays in $\R^2_+$ through 0 are replaced by \lq parabolas\rq, by which we mean the sets $\{y=Cx^2\}$ including the cases $C=0$,  i.e.\ the $x$-axis, and $C=\infty$, i.e.\ the $y$-axis.
Then the blown-up space is constructed by removing 0 and replacing it by one separate endpoint for each parabola. These endpoints can be thought of as forming a quarter  circle again, so the blown-up space looks just like, and in fact will be diffeomorphic to, $[\R^2_+,0]$. However, the blow-down map $\beta$ will be different.

Here is a local model realizing this idea: Let $r(x,y)=\sqrt{x^2+y}$ and $S^1_q = \{(\omega,\eta)\in\R^2_+:\, r(\omega,\eta)=1\}$. Then we let
$[\R^2_+,0]_q = \R_+\times S^1_q$ with blow-down map
$$ \beta(r,(\omega,\eta)) = (r\omega,r^2\eta).$$
This is constructed so that $\beta$ maps each half line $\{(\omega,\eta)=\const\}$ to a parabola, so that indeed endpoints of parabolas correspond to points of $\ff:=\{0\}\times S^1_q$. 
Also, $\beta$ maps $\ff\subset[\R^2_+,0]_q$ to $0\in\R^2_+$ and is a diffeomorphism between the complements of these sets.%
\footnote{Maybe you ask: why this model, not another one? In fact, the precise choice or $r$ and $S^1_q$ are irrelevant -- any choice of  positive smooth function $r$ which is 1-homogeneous when giving $x$ the weight 1 and $y$ the weight 2, and any section transversal to all parabolas which stays away from the origin
 will do, with the same definition of $\beta$. Choosing $S^1_q=r^{-1}(1)$ has the nice feature that use of the letter $r$ is consistent in that $r(\beta(R,(\omega,\eta)))=R$.}

Projective coordinates are as shown in Figure \ref{fig:proj coord qh}. 
The coordinates near $A$ seem quite natural: $x$ smoothly parametrizes the points on each parabola $\{y=Cx^2\}$ (except $C=\infty$), and the parabolas are parametrized by the value of $C=\frac y{x^2}$, so pairs $(\frac y{x^2},x)$ parametrize pairs (parabola, point on this parabola). On the other hand, the coordinates near $B$ require explanation.
One way to understand them is to check in the model that these are indeed coordinates (compare the explanation after \eqref{eqn:proj coord}; do it!). Without reference to the model the exponents that occur can be understood from three principles: 
\begin{enumerate}
 \item[(a)] 
 The coordinate \lq along the front face' should reflect the  scaling 
$x\sim\sqrt y$.
 \item[(b)] 
$\beta$ should be smooth, so both $x$ and $y$ must be expressible 
as monomials in the coordinates,\footnote{This means that we require $\beta$ to be a b-map, a condition stronger than smoothness, see \cite{Mel:DAMWC}.} near $A$ and near $B$
 \item[(c)] 
 The smooth structure on $[\R^2_+,0]_q$ should be the minimal one satisfying (a) and (b), i.e. the exponents should be maximal possible.
\end{enumerate}
So for the system near $A$, (b) implies that in the coordinate along $\ff$ the exponent of $y$ must be  $\frac1m$ for some $m\in\N$, and then (c) implies $m=1$. Hence the coordinate must be $\frac y{x^2}$ by (a). The exponent of $x$ in the other coordinate must be 1 by (b) and (c). Similarly, near $B$ in the coordinate along $\ff$ we need $x$ in first power by (b) and (c), and (a) gives $\frac x{\sqrt y}$. Then (b) and (c) leave no choice but to have $\sqrt y$ as the other coordinate.\footnote{A different way to understand the coordinates $\frac y{x^2}$, $\frac x{\sqrt y}$ along $\ff$ is to note that $\frac y{x^2}$ is a defining function of $\rf$ in its interior $x>0$, and $\frac{x}{\sqrt y}$ is a defining function of $\lf$ in its interior $y>0$.

This reflects the fact that only the point $0\in\R^2_+$ is affected by the blow-up, that is, that $\beta$ is a diffeomorphism between the complements of $\beta^{-1}(0)$ and $\{0\}$. In particular, quasihomogeneous blow-up is {\em not} the same as first replacing the variable $y$ by $\sqrt y$ and then doing a standard blow-up. 
}

Projective coordinates can be used as in Lemma \ref{lem:proj coord phg} to check whether quasihomogeneous blow-up resolves a function.

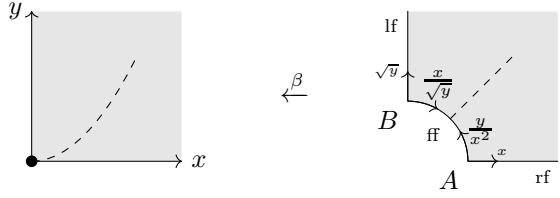
\begin{figure}
\centering
\begin{tikzpicture}

\begin{scope}[scale=2]
  \draw[fill=gray!20,draw=white]
   (0,0) rectangle (1,1);

\draw[->] (0,0) -- (1,0) node[right]{$x$};    
\draw[->] (0,0) -- (0,1) node[left]{$y$}; 

\draw[fill] (0,0) circle (1pt);

\draw[dashed] (0,0) parabola (.7,.7);

\end{scope}

 \node at (3.5,1) {$\xleftarrow{\beta}$};

\begin{scope}[xshift=5cm,scale=2]
 \draw[fill=gray!20,draw=white]
   (0,0) rectangle (1,1)
   (0,0) -- (\bupr,0) arc (0:90:\bupr) -- (0,0);
   
\draw (1,0) -- (.9,0) node[below]{$\scriptstyle\rf$} -- (\bupr,0) node[below left]{$A$} arc (0:90:\bupr) node[below left]{$B$} --
          (0,.9) node[left]{$\scriptstyle\lf$} -- (0,1); 
\draw (.7*\bupr,.7*\bupr) node[below left]{$\scriptstyle\ff$};
\draw[dashed] (\bupr*.707,\bupr*.707) -- (.7,.7);

% alternative: draw proj. coord systems with separate small axes
%\begin{scope}[shift={(\bupr+\off,\off)}]  % draw small coord.system
% \draw[->] (0,0) -- (\pcaxis,0) node[right=-2pt]{\tiny $x$};
% \draw[->] (0,0) -- (0,\pcaxis) node[right=-2pt]{\small $\frac yx$};
%\end{scope}
%
%\begin{scope}[shift={(\off,\bupr+\off)}]  % draw small coord.system
% \draw[->] (0,0) -- (\pcaxis,0) node[right=-2pt]{\small $\frac xy$};
% \draw[->] (0,0) -- (0,\pcaxis) node[right=-2pt]{\tiny $y$};
%\end{scope}

\begin{scope}[shift={(\bupr,0)}]  % draw small coord.system
 \draw[->] (0,0) -- (\pcaxis,0) node[above right=-1pt and -4pt]{\tiny $x$};
 \draw[->] (0,0) arc (0:30:\bupr) node[right=-1pt]{\small $\frac y{x^2}$};
\end{scope}

\begin{scope}[shift={(0,\bupr)}]  % draw small coord.system
 \draw[->] (0,0) arc (90:60:\bupr) node[above]{\small $\frac x{\sqrt y}$};
 \draw[->] (0,0) -- (0,\pcaxis) node[left]{\tiny $\sqrt y$};
\end{scope}

\end{scope}
 
\end{tikzpicture}

 \caption{Projective coordinate systems for quasihomogeneous blow-up $[\R^2_+,0]_q$}
 \label{fig:proj coord qh}
\end{figure}

For more details, including the question  of coordinate invariance, see
\cite{EpsMelMen:RLSPD} and \cite{Mel:DAMWC}; see also \cite{GriHun:POCGQLSSI}. A more general blow-up procedure is introduced in \cite{KotMel:GBCFP}, see also \cite{Joy:GMWC}, \cite{Kot:BUMWGC}. This is closely related to blow-up in toric geometry, see \cite{CoxLitSchen:TV}.

%%%%%%%%%%%
\subsection{Summary on blow-up and scales; further examples} \label{subsec:resol-ex}

We first summarize our discussion of the function
$f(x,h) = \frac{x}{x+h}$: $f$ is smooth on $\R^2_+\setminus0$ but has no continuous extension to 0. The behavior of $f$ near 0 can be described by saying that the scaling limit $\lim_{h\to0} f(hX,h)=g(X)$ exists for all $X$. 
This can be restated in terms of the blow-up of 0 in $\R^2_+$ with blow-down map $\beta:[\R^2_+,0]\to\R^2_+$ and front face $\ff=\beta^{-1}(0)$: 
the function $\beta^*f$, defined on $[\R^2_+,0]\setminus\ff$, extends continuously to $\ff$, and $g$ is the restriction of this extension to $\ff$ when $\ff$ is parametrized by $X$. Here $X=\frac xh$ is part of the  projective coordinate system $X,h$. 

In fact, we saw that the extension of $\beta^*f$ is not only continuous but even smooth on $[\R^2_+,0]$. That is, $f$ is resolved by $\beta$
in the sense of Definition \ref{def:resol functions}.

The fact that $g$ is not constant leads to the discontinuity of $f$ at 0.

\medskip

The example suggests that the vague idea of scaling behavior is captured by the notion of resolution, which is defined rigorously in Definition \ref{def:resol functions}. We note a few details of this definition:
\begin{enumerate}
 \item
 The resolved function $\beta^*f$ is required to be polyhomogeneous, which means in particular:
\begin{itemize}
 \item the asymptotics holds with all derivatives
 \item full asymptotics is required, not just leading order asymptotics
\end{itemize}
To include derivatives is natural since we want to deal with differential equations. To require full asymptotics is then natural since for example smoothness at a boundary point means having a full asymptotic series (the Taylor series). Only the combination of both conditions yields a unified theory.\footnote{Of course one could define finite order (in number of derivatives or number of asymptotic terms) theories, and this may be useful for some problems. However, many problems do admit infinite order asymptotics -- once the scales are correctly identified. Requiring less than the best possible sometimes obfuscates the view towards the structure of a problem.}
\item
On the other hand, requiring $\beta^*f$ to be smooth would be too restrictive (compare Footnote \ref{footnote:logs}). What really matters is the product structure near corners as explained in Section \ref{subsubsec:phg product}.
\item
Of course any function can be \lq over-resolved\rq, for example if $f$ is smooth on $\R^2_+$ then we may still look at $\beta^*f$ which is still smooth. This would correspond to \lq looking at $f$ at scale $x\sim h$\rq.\footnote{So really we should not say that a function \lq exhibits the scale $x\sim h$\rq, since every function does. More appropriate may be \lq $f$ requires scale $x\sim h$\rq, or \lq The scale $x\sim h$ is relevant for $f$\rq. In any case, \lq $f$ is resolved by $\beta$\rq\ is a well-defined statement giving an upper bound on the \lq badness\rq\ of $f$.}
\end{enumerate}
We give some more examples to illustrate these points. 
\begin{examples}
In these examples we denote coordinates on $\R^2_+$ by $x,h$ to emphasize the relation to scaling. $\beta$ is always the blow-down map for the blow-up of 0 in $\R^2_+$.
\begin{enumerate}
 \item
 $f(x,h) = x+h$ is smooth on $\R^2_+$. In scaled coordinates
 $f(hX,h) = h(X+1)$ is the expansion of $\beta^*f$ at the front face.
 \item
 $f(x,h)=\sqrt{x+h}$ is not polyhomogeneous on $\R^2_+$ as can be seen from the Taylor expansion as $h\to0$ for fixed $x>0$:
 $$\sqrt{x+h}=\sqrt x \sqrt{1+\frac hx} = \sum_{k=0}^\infty \binom{\frac12}k
 x^{\frac12-k} h^k $$
 compare \eqref{eqn:f expansion 2}. 
 However, note that $f_h=f(\cdot,h)$ converges uniformly to $f_0$ on $\R_+$. But already $f_h'$ does not converge uniformly to $f_0'$. The same is true for $f_2(x,h)=\sqrt{x^2+h^2}$ even though $f_0$ is smooth.
 
 Both $f$ and $f_2$ are resolved by blowing up $0$ in $\R^2_+$.
 
 These examples show that non-trivial scaling behavior may only be visible in the derivatives.
 \item
 $f(x,h)=\sqrt{x^2+xh+h^3}$ is resolved by the double blow-up in Figure \ref{fig:double blow up}, see Exercise \ref{exer:double blow-up}. The two front faces correspond to the scales $x\sim h$ and $x\sim h^2$.
 Any problem involving $f$ needs to take into account both of these scales.
\end{enumerate}
\end{examples}
\medskip

To end this section we consider an example in three dimensions where a set is resolved by two blow-ups. This will be used in Section \ref{sec:adiabatic with ends}.

 Consider the family of plane domains $\Omega_h\subset\R^2$, $h>0$, shown in Figure \ref{fig:example Omegascale}: The $1\times h$ rectangle $[0,1)\times(0,h)$ with a fixed triangle (e.g. a right-angled isosceles triangle), scaled to have base $h$,  attached at one end. Again we want to describe the behavior of $\Omega_h$ as $h\to0$. As in the first example, different features emerge at different scales:
%Geometrically, this family of domains exhibits two different scaling behaviors:
\begin{enumerate}
\item
We can consider $B:=\lim_{h\to0}\Omega_h$. This is just an interval.\footnote{The precise meaning of the limit is irrelevant for this motivational discussion. You may think of Hausdorff limits.}
Many features of $\Omega_h$ are lost in the limit: the thickness $h$,  the triangular shape at the end.
\item
More information is retained by noting that $y$ scales like $h$, hence considering
\begin{equation}
\label{eqn:Omega_h, scale A}
 A_h := \{(x,Y):\ (x,hY) \in \Omega_h\},
\end{equation}
the domain obtained from stretching by the factor $h^{-1}$ in the $y$-direction.
Then $A:=\lim_{h\to0}A_h$ is the square $(0,1)\times(0,1)$. This still forgets the triangular shape at the end.
\item
At the left end, both $x$ and $y$ scale like $h$. So we consider
\begin{equation}
\label{eqn:Omega_h, scale S}
 S_h := \{(X,Y):\ (hX, hY) \in \Omega_h\} = h^{-1}\Omega_h\,.
\end{equation}
Then $S:=\lim_{h\to0}S_h$ is a half infinite strip of width one with a triangle attached at the left end. This limit remembers the triangle, but not that $\Omega_h$ has essentially length 1 in the $x$-direction.
\end{enumerate}

For the asymptotic analysis of the eigenvalue problem on $\Omega_h$ in Section \ref{sec:adiabatic with ends} it will be essential to understand $A$ and $S$ as parts of one bigger space, which arises as resolution of the closure of $\Omega=\bigcup\limits_{h>0}\Omega_h\times\{h\} \subset\R^3$. This resolution is shown in Figure \ref{fig:blow-up} and explained there. Note that $A$ and $S$ are boundary hypersurfaces. The  limit interval $B$ occurs as the base of a natural fibration of the face $A$.

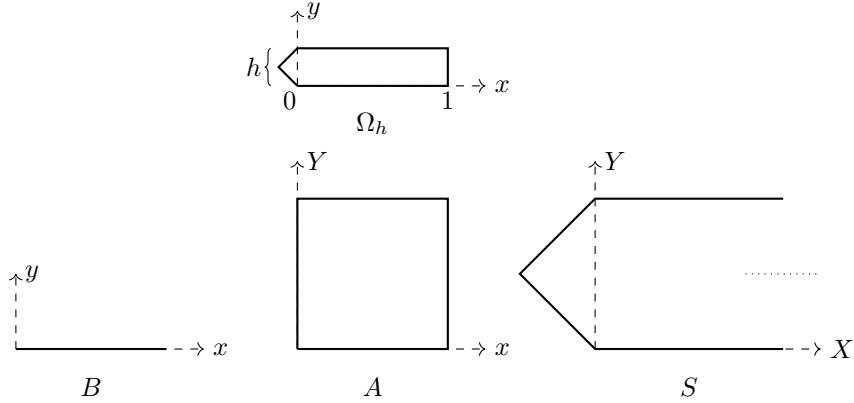
\begin{figure}
\centering
 \begin{tikzpicture}
\matrix
{
;& 
 \draw[thick]  (0,0) -- (2,0)  -- (2,.5) -- (0,.5) -- (-.25,.25) -- (0,0);
 \draw[dashed,->] (0,0) -- (2.5,0) node[right]{$x$};
 \draw[dashed,->] (0,0) -- (0,1) node[right]{$y$};  
 \node at (1,-.5) {$\Omega_h$};
% \node at (-.3,.5){$h$};
 \node at (2,-.2){1};
 \node at (-.1,-.2){0};
\draw[decorate,decoration={brace,amplitude=2pt},xshift=-3pt,yshift=0pt]
(-.25,0) -- (-.25,.5) node [midway,left]{$h$};

& ;
\\
 \draw[thick]  (0,0) -- (2,0);
 \draw[dashed,->] (0,0) -- (2.5,0) node[right]{$x$};
 \draw[dashed,->] (0,0) -- (0,1) node[right]{$y$};  
 \node at (1,-.5) {$B$};
&  % h=1/4
 \draw[thick]  (0,0) -- (2,0)  -- (2,2) -- (0,2)  -- (0,0);
 \draw[dashed,->] (0,0) -- (2.5,0) node[right]{$x$};
 \draw[dashed,->] (0,0) -- (0,2.5) node[right]{$Y$};  
  \node at (1,-.5) {$A$};
&
 \draw[thick]  (2.5,0) -- (0,0) -- (-1,1)  -- (0,2) -- (2.5,2);
 \draw[dashed,->] (0,0) -- (3,0) node[right]{$X$};
 \draw[dashed,->] (0,0) -- (0,2.5) node[right]{$Y$};  
 \draw[dotted] (2,1) -- (3,1);
  \node at (1.25,-.5) {$S$};
\\
};

% \draw[->] (-.5,-.3) node[below]{$-h$} -- (0,-.3) node[below]{0} -- (2,-.3) node[below]{1} -- (2.5,-.3);
% \foreach \x in {-.5,0,2}
% { \draw (\x,-.2) -- (\x,-.4);
%\draw[decorate,decoration={brace,mirror,amplitude=2pt},xshift=0pt,yshift=-3pt]
%(.02,0) -- (2,0) node [midway,below]{1};
%\draw[decorate,decoration={brace,mirror,amplitude=2pt},xshift=0pt,yshift=-3pt]
%(-.5,0) -- (-.02,0) node [midway,below]{$h$};
% 
\end{tikzpicture}
\caption{A family of domains $\Omega_h$ and three rescaled limits as $h\to0$}
\label{fig:example Omegascale}
\end{figure}

%%%%%%%%%%%%%%%%%%%%%%%%%%%%%%%%%%%%%%%%%%%%%%
\section{Generalities on quasimode constructions; the main steps}
\label{sec:main steps}
In this section we give an outline of the main steps of the quasimode constructions that will be carried out in the following sections. 

For each $h>0$ let $\Omega_h$ be a bounded domain in $\R^2$, and let $P_h=-\Delta$ be the Laplacian on $\Omega_h$, acting on functions that vanish at the boundary $\partial\Omega_h$. We assume that $\partial\Omega_h$ is piecewise smooth.\footnote{%
More generally one can consider families of compact manifolds with (or without) boundary and differential operators on them which are elliptic and self-adjoint with respect to given measures and for given boundary conditions. The methods are designed to work naturally in this context. Non-smooth boundary may require extra work.}

A \defin{quasimode} for the family $(\Omega_h)_{h>0}$ is a family $(\lambda_h,u_h)_{h>0}$ 
where $\lambda_h\in\R$ and $u_h$ is in the domain of $P_h$ (in particular, $u_h=0$ at $\partial\Omega_h$), so that
\begin{equation}
\label{eqn:quasimode estimate} 
 (P_h-\lambda_h)u_h = O(h^\infty)\quad\text{ as }h\to 0.
\end{equation}
Here $O(h^\infty)$ means $O(h^N)$ for each $N$. 
We are ambitious in that we require these estimates to hold uniformly,
also for all derivatives with respect to $x\in\Omega_h$ and with respect to $h$.

We reformulate this as follows: Consider the \defin{total space}
$$ \Omega = \bigcup_{h>0} \Omega_h\times\{h\} \subset\R^2\times\R_+.$$
We assume that $\Omega_h$ depends continuously on $h$ in the sense that $\Omega$ is open.
A family of functions $u_h$ on $\Omega_h$ corresponds to a single function $u$ on $\Omega$ defined by $u(x,h)=u_h(x)$. The operators $P_h$ define a single operator $P$ on $\Omega$ via
$$ (Pu)(\cdot,h) = P_h (u_h).$$
The operator $P$ differentiates only in the $\Omega_h$ directions, not in $h$. Then  a quasimode is a pair of functions $\lambda:\R_{>0}\to\R$, $u:\Omega\to\R$ satisfying the boundary conditions and
$$ (P-\lambda)u = O(h^\infty)\quad\text{ as }h\to0.$$
\medskip

How can we find quasimodes?
Since the only issue is the behavior as $h\to0$, one expects that finding $\lambda$ and $u$ reduces to solving PDE problems \lq at $h=0$\rq, along with an iterative construction: 
first solve with
 $O(h)$ as right hand side, then improve the solution so the error is $O(h^2)$ etc.

 This is straightforward in the case of a regular perturbation, i.e.\ if the family $(\Omega_h)_{h>0}$ has a limit $\Omega_0$ at $h=0$, and the resulting family is smooth for $h\geq0$. This essentially means that the closure $\Omegabar$ of the total space $\Omega$  is a manifold with corners, see Section \ref{subsec:reg pert setup} for details.  In particular, $\Omega_0$ is still a bounded domain in $\R^2$.
Then the problem at $h=0$ is the \defin{model problem}
$$ (P_0-\lambda_0)v = g \quad\text{on }\Omega_0,\quad v=0\text{ at }\partial\Omega_0$$
where $P_0=-\Delta$ on $\Omega_0$.
Solving the model problem is the only analytic input in the quasimode construction. 
As we recall in Section \ref{sec:regular pert} the iterative step reduces to solving this equation, plus some very simple algebra.

However, our main focus will be on {\bf singular perturbations}, where a limit $\Omega_0$ exists but $\Omega_h$ does not depend smoothly on $h$ at $h=0$, so $\Omegabar$ has a singularity at $h=0$. 
For example, if $\Omega_0$ 
is an interval or a curve, then this singularity looks approximately like an edge, see Figures \ref{fig:adiab limit b-up}, \ref{fig:adiab var resolution} and \ref{fig:blow-up}. 
We will consider several concrete such families. Their common feature is that this singularity can be resolved by (possibly several) blow-ups, yielding a manifold with corners $M$ and a smooth map
$$ \beta: M\to\Omegabar\,.$$
As explained in Section \ref{sec:intro mwc}  this corresponds to a certain scaling behavior in the family $(\Omega_h)_{h>0}$ as $h\to0$.
The boundary hypersurfaces of $M$ at $h=0$, whose union is   $$\partial_0 M:=\beta^{-1}(\Omegabar\cap\{h=0\}),$$ will now take the role of $\Omega_0$, i.e.\ they will carry the
model problems whose solution is used for constructing quasimodes.

Since in the singular case several model problems are involved, the algebra needed for the quasimode construction is more complicated than in the regular case. However, 
this can be streamlined, and unified, by cleverly defining function spaces $\calE(M)$ and $\calR(M)$ which will contain putative quasimodes $u$ and remainders $f=(P-\lambda)u$, respectively, along with suitable notions of {\bf leading part} (at $h=0$). The leading parts will lie in spaces $\calE(\partial_0 M)$ and $\calR(\partial_0 M)$, and are, essentially, functions on $\partial_0 M$. 
All model problems together define the {\bf model operator} $(P-\lambda)_0:\calE(\partial_0 M)\to\calR(\partial_0M)$. 
Denoting for the moment by $\LP$ the leading part map, the needed algebra will be summarized in a {\em Leading part and model operator lemma}, which states\footnote{This is analogous to the algebra needed for the parametrix construction in the classical pseudodifferential calculus, as explained in \cite{Gri:BBC}: $\LP$ corresponds to the symbol map, the model operator is the constant coefficient operator obtained by freezing coefficients at any point. Invertibility of the model operator (which amounts to ellipticity) allows construction of a parametrix, which is the analogue of the construction of a quasimode.}
\begin{enumerate}
 \item[a)] the exactness of the sequences
\begin{equation}
\label{eqn:exact sequences}
\begin{array}{lllllllll}
 0 &\to& h\calE(M) &\to& \calE(M) &\xrightarrow{\LP} &\calE(\partial_0 M) & \to &0 \\
 0 &\to& h\calR(M) &\to& \calR(M) &\xrightarrow{\LP} &\calR(\partial_0 M) &\to &0\,;
\end{array}
\end{equation}
The main points here are exactness at $\calR(M)$ and at $\calE(\partial_0 M)$, explicitly:
$$f\in\calR(M),\ \LP(f)=0\Rightarrow f\in h\calR(M),$$
and any $v\in\calE(\partial_0 M)$ is the leading part of some $u\in\calE(M)$;
\item[b)] the commutativity of the diagram
\begin{equation}
\label{eqn:comm diagram model op}
 \xymatrixcolsep{3pc}% make horizontal arrow longer
 \xymatrix{
 \calE(M) \ar[d]^{P-\lambda} \ar[r]^{\LP}        & \calE(\partial_0 M) \ar[d]^{(P-\lambda)_0}  \\
 \calR(M) \ar[r]^{\LP}                                     & \calR(\partial_0 M)}
\end{equation}
That is, $(P-\lambda)_0$ encodes the leading behavior of $P-\lambda$ at $h=0$.
\end{enumerate}

Summarizing, the main steps of  the quasimode constructions are:
\begin{enumerate}
 \item
 Resolve the geometry, find the relevant scales
 \item
 Find the correct spaces for eigenfunctions and remainders
 \item
 Find the correct \lq leading part' definition for eigenfunctions and remainders. Identify model operators, prove Leading part and model operator lemma.
 \item
Study model operators (solvability of homogeneous/non-homogeneous PDE problems)
 \item
 Carry out the construction: Initial step, inductive step 
\end{enumerate}
The examples are progressively more complex, so that some features will occur only in later examples. Of course the process of finding the correct spaces etc. may be non-linear, as usual.

The \lq meat\rq\ is in step 4. After this, step 5 is easy. 
Steps 1-3 are the conceptual work needed to reduce the construction of quasimodes to the study of model operators.

The results are formulated in Theorems \ref{thm:regular quasimodes}, \ref{thm:adiab quasimodes}, \ref{thm:adiabatic variable}, \ref{thm:adiab ends}. They all have the same structure: given data for $\lambda$ and $u$ at $h=0$ there is a unique quasimode having this data. For $\lambda$ the data is the first or first two asymptotic terms, for $u$ the data is the restriction to the boundary hypersurfaces of $M$ at $h=0$. Both cannot be freely chosen but correspond to a boundary eigenvalue problem.

There are many other types of singular perturbations which can be treated by the same scheme. For example, $\Omega_0$ could be a domain with a corner and $\Omega_h$ be obtained from $\Omega_0$ by rounding the corner at scale $h$. Or $\Omega_h$ could be obtained from a domain $\Omega_0$ by removing a disk of radius $h$. 
\begin{remark}[Are the blow-ups needed?]
Our constructions yield precise asymptotic information about $u$ as a function of $h,x,y$. Different boundary hypersurfaces of $M$ at $h=0$ correspond to different asymptotic regimes in the family $\Omega_h$. 
This is nice, but is it really needed if we are only interested in $\lambda$, say?

The leading asymptotic term for $\lambda$ and $u$ as $h\to0$ is often easier to come by and does not usually require considering different regimes.
But in order to obtain higher order terms  of $\lambda$, it is necessary to obtain this detailed information about $u$ along the way. As we will see, all regimes of the asymptotics of $u$ will \lq influence\rq\ the asymptotics of $\lambda$, often starting at different orders of the expansion. Another mechanism is that justifying a formal expansion up to a certain order usually requires knowing the expansion to a higher order (as is explained in \cite{GriJer:AEPD}, for example).
\end{remark}

\section{Regular perturbations}
\label{sec:regular pert}

To set the stage we first consider the case of a regular perturbation. Here basic features of any quasimode construction are introduced: 
the reduction to an initial and an inductive step, the
identification of a model operator, and the use of the solvability properties of the model operator for carrying out the initial and inductive steps. 

\subsection{Setup}
\label{subsec:reg pert setup}
Let $\Omega_h$, $h\geq0$ be a family of bounded domains in $\R^2$ with smooth boundary.\footnote{Everything works just as well in $\R^n$ or in a smooth Riemannian manifold. Also the smoothness of the boundary can be relaxed, for example the $\Omega_h$ could be domains with corners, then the requirement (B) below is that $M$ be a d-submanifold of $\R^n\times\R_+$, as defined before Definition \ref{def:resol subsets}.} 
We say that this family is a regular perturbation of $\Omega_0$ if one of the following equivalent conditions is satisfied:
\begin{enumerate}
 \item[(A)]
 There are diffeomorphisms $\Phi_h:\Omegabar_0\to\Omegabar_h$ so that $\Phi_h$ is smooth in $x\in\Omegabar_0$ and $h\geq0$, and  $\Phi_0=\Id_{\Omegabar_0}$.
 \item[(B)]
 The closure of the total space
  $$M=\Omegabar = \bigcup_{h\geq0}\Omegabar_h \times \{h\} \subset\R^2\times\R_+ $$ is a manifold with corners, with boundary hypersurfaces 
 $$ X:= \Omegabar_0,\quad \dDir M:=\bigcup_{h\geq0}\partial\Omegabar_h.$$ 
\end{enumerate}
See Figure \ref{fig:reg pert}.\footnote{To prove the equivalence of (A) and (B) note that $\Omegabar_0\times\R_+$ is a manifold with corners and that the
 $\Phi_h$ define a trivialization (diffeomorphism) $\Phi:\Omegabar_0\times\R_+\to\Omegabar, (x,h)\mapsto (\Phi_h(x),h)$, and conversely a trivialization defines $\Phi_h$.
 
(B) could also be reformulated as: $M$ is a p-submanifold of $\R^2\times\R_+$ }
Note that the two boundary hypersurfaces play different roles: At $\dDir M$, the \lq Dirichlet boundary\rq, we impose Dirichlet boundary conditions. The quasimode construction proceeds at $X=\{h=0\}$.
To unify notation, we denote the boundary of $X$ by $\dDir X$.

In the geometric spirit of this article, and to prepare for later generalization, we use condition (B). For explicit calculations the maps $\Phi_h$ in (A) are useful, as we indicate in Subsection \ref{sec:reg explicit}.  

As explained in Section \ref{sec:main steps} the quasimode construction problem is to find $u$ and $\lambda$ satisfying
$(P-\lambda)u=O(h^\infty)$, where $u$ is required to satisfy Dirichlet boundary conditions. Here $P=-\Delta$ on each $\Omega_h$.

\begin{figure}
\centering
\begin{tikzpicture}
 \draw (0,0) ellipse (2cm and .5cm);
% \draw[dashed] (-2,0) arc (180:0:2cm and .5cm);
% \draw (2,0) arc (360:180:2cm and .5cm);
 \draw (.2,1.3) ellipse (3cm and .6cm);
 \draw (-2,0) -- (-2.8,1.3);
 \draw (2,0) -- (3.2,1.3);
 
\begin{scope}[xshift=-1cm] % coord. system
 \draw[->] (-3,0) -- (-3,2) node[left]{$h$};
 \draw[->] (-3,0) -- (-2,0);% node[below]{$x$};
 \draw[->] (-3,0) -- (-3.5,-.5);% node[right]{$y$}; 
\end{scope}
% boundary faces
 \node at (0,0){$X$};
 \node at (3,.5){$\dDir M$};
 
\end{tikzpicture}
 
\caption{The total space $M$ for a regular perturbation}
\label{fig:reg pert}
\end{figure}
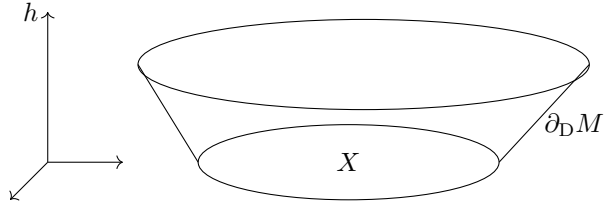
%%%%%%%%%%%%%%%%%%
\subsection{Solution}
\label{subsec:reg solution}
The idea is this: Rather than solve $(P-\lambda)u=O(h^\infty)$ directly, we proceed inductively with respect to the order of vanishing of the right hand side:
\begin{quote}
\begin{tabular}{lll}
{\bf Initial step:} &  Find $\lambda,u$ satisfying & $(P-\lambda)u=O(h)$. \\[1mm]
{\bf Inductive step:} &  Given $\lambda,u$ satisfying & $(P-\lambda)u=O(h^k)$ where $k\geq1$,\\[.5mm] 
& find $\lambdatilde,\utilde$ satisfying & $(P-\lambdatilde)\utilde=O(h^{k+1})$.
\end{tabular}
\end{quote}
Before carrying this out, we prepare the stage. 
We structure the exposition of the details so that it parallels the later generalizations.
%%%%%%%%%%
\subsubsection{Function spaces, leading part and model operator}
For a regular perturbation we expect  $\lambda$ and $u$ to be smooth up to $h=0$. Therefore we introduce the function spaces
\begin{align*}
\CinfD(M) &= \{u\in\Cinf(M):\ u=0\text{ at } \dDir M\} \\
\CinfD(X) &= \{v\in\Cinf(X):\ v=0\text{ at } \dDir X\} 
\end{align*}
and
$$ h^k\Cinf(M) = \{h^kf:\, f\in\Cinf(M)\},\quad h^\infty\Cinf(M) = \bigcap_{k\in\N} h^k\Cinf(M).$$
So $h^\infty\Cinf(M)$ is the space of smooth functions on $M$ vanishing to infinite order at the boundary $h=0$. For simplicity we always consider real-valued functions.

We seek $u\in\CinfD(M)$ for which the remainders $f=(P-\lambda)u$ lie in $h^k\Cinf(M)$ for $k=1,2,3,\dots$.
Our final goal is:
$$ \text{Find } \lambda\in\Cinf(\R_+),\ u\in\CinfD(M)
\text{ so that } (P-\lambda)u\in h^\infty\Cinf(M)\,.$$

The \defin{leading part} of  $u\in\CinfD(M)$ and of $f\in \Cinf(M)$ is defined to be the restriction to $h=0$:
$$ u_X :=u_{|X},\quad f_X:=f_{|X}.$$

The following lemma is obvious. In (a)  use Taylor's theorem.
\begin{leading part and model operator lemma}[regular perturbation]
\mbox{}
\begin{enumerate}
 \item[a)]
If $f\in\Cinf(M)$ then
$$ f\in h\,\Cinf(M) \ \text{ if and only if }\  f_X = 0.$$
 \item[b)]
 For $\lambda\in\Cinf(\R_+)$ we have
 $$ P-\lambda: \CinfD(M) \to \Cinf(M) $$
 and
 $$ [(P-\lambda)u]_X = (P_0-\lambda_0)u_X$$  
where $P_0=-\Delta$ is the Laplacian on $\Omega_0$ and $\lambda_0=\lambda(0)$.
\end{enumerate}
\end{leading part and model operator lemma}
We call 
$$P_0-\lambda_0:\CinfD(X)\to\Cinf(X)$$ 
the {\bf model operator} of $P-\lambda$, since it models its action at $h=0$.
Thus, the leading part of $(P-\lambda)u$ is obtained by applying the model operator to the leading part of $u$.
\begin{remark}
 In the uniform notation of Section \ref{sec:main steps}, see \eqref{eqn:exact sequences}, \eqref{eqn:comm diagram model op}, we have $\partial_0 M=X$ and
 $\calE(M) = \CinfD(M), \calE(\partial_0 M)=\CinfD(X), \calR(M)=\Cinf(M), \calR(\partial_0 M)=\Cinf(X)$ and $\LP(u)=u_X$, $\LP(f)=f_X$, $(P-\lambda)_0=P_0-\lambda_0$.
\end{remark}

%%%%%%%%%
\subsubsection{Analytic input for model operator}
The core analytic input in the construction of quasimodes is the following fact about $P_0$.
\begin{lemma}\label{lem:orth decomp}
Let $\lambda_0\in\R$ and $g\in\Cinf(X)$. Then there is a unique \mbox{$\gamma\in\Ker(P_0-\lambda_0)$} so that the equation
\begin{equation}
\label{eqn:reg pert model problem} 
 (P_0-\lambda_0)v = g + \gamma
\end{equation}
has a solution $v\in\CinfD(X)$.
Also, $\gamma=0$ if and only if $g\perp\Ker(P_0-\lambda_0)$.

The solution $v$ is unique up to adding an element of 
 $\Ker(P_0-\lambda_0).$
\end{lemma}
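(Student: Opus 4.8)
The plan is to reduce the statement entirely to classical elliptic spectral theory on the fixed smooth bounded domain $\Omega_0=X$. First I would recall that $P_0=-\Delta$ with Dirichlet boundary conditions, regarded as an unbounded operator on $L^2(X)$ with domain $H^2(X)\cap H^1_0(X)$, is self-adjoint and has compact resolvent; hence its spectrum is a discrete set of real eigenvalues of finite multiplicity. In particular $\calK:=\Ker(P_0-\lambda_0)$ is finite-dimensional (and is $\{0\}$ unless $\lambda_0$ is a Dirichlet eigenvalue), and by elliptic regularity up to the smooth boundary every element of $\calK$ lies in $\CinfD(X)$. Self-adjointness moreover gives $\Ran(P_0-\lambda_0)=\calK^\perp$, with the range closed.

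Next I would let $\Pi\colon L^2(X)\to\calK$ be the orthogonal projection and set $\gamma:=-\Pi g$, so that $g+\gamma=(\Id-\Pi)g\in\calK^\perp=\Ran(P_0-\lambda_0)$. Thus there is $v\in H^2(X)\cap H^1_0(X)$ solving $(P_0-\lambda_0)v=g+\gamma$; since $g+\gamma\in\Cinf(X)$ and $\partial X$ is smooth, elliptic regularity up to the boundary upgrades this to $v\in\Cinf(X)$, and together with $v=0$ on $\dDir X$ we obtain $v\in\CinfD(X)$. The last clause of the lemma is then immediate: two solutions of the same equation differ by an element of $\Ker(P_0-\lambda_0)=\calK$, and conversely adding any element of $\calK$ preserves the equation.

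For the uniqueness of $\gamma$ I would argue as follows: if $(P_0-\lambda_0)v=g+\gamma$ and $(P_0-\lambda_0)v'=g+\gamma'$ with $\gamma,\gamma'\in\calK$, then subtracting gives $\gamma-\gamma'=(P_0-\lambda_0)(v-v')\in\Ran(P_0-\lambda_0)=\calK^\perp$, whence $\gamma-\gamma'\in\calK\cap\calK^\perp=\{0\}$. Finally $\gamma=0$ holds exactly when $g=g+\gamma\in\calK^\perp$, i.e.\ when $g\perp\Ker(P_0-\lambda_0)$; one implication is the very construction $\gamma=-\Pi g$, the other follows since $\gamma$ is now known to be uniquely determined.

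The only real obstacle here is the analytic input itself: self-adjointness of the Dirichlet Laplacian with discrete spectrum (via Rellich compactness of the resolvent), the fact that its range is the orthogonal complement of its kernel, and elliptic regularity up to a smooth boundary. These are all standard, and once they are granted the rest is linear algebra in the finite-dimensional space $\calK$. If one preferred to avoid the full self-adjoint spectral theorem, one could instead invoke the Fredholm alternative for the compact operator $(P_0+1)^{-1}$ together with the symmetry of $P_0$ to establish $\Ran(P_0-\lambda_0)=\calK^\perp$ directly.
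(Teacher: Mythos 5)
Your proof is correct and follows essentially the same route as the paper: the paper's argument is precisely the orthogonal decomposition $\Cinf(X)=\Ran(P_0-\lambda_0)\oplus\Ker(P_0-\lambda_0)$ coming from self-adjointness of the Dirichlet Laplacian together with elliptic regularity, which you have simply spelled out in detail (compact resolvent, closed range $=\calK^\perp$, projection $\Pi$, regularity up to the boundary). The extra detail is fine but adds nothing beyond the paper's one-line proof.
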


Note that the lemma is true for any elliptic, self-adjoint elliptic operator on a compact manifold with boundary.
\begin{proof}
By standard elliptic theory, self-adjointness of $P_0$ in $L^2(X)$ 
and elliptic regularity imply 
the orthogonal decomposition $\Cinf(X)= \Ran(P_0-\lambda_0)\oplus\Ker(P_0-\lambda_0)$. This implies the lemma.
\end{proof}

%%%%%%%%%%%%%%
\subsubsection{Inductive construction of quasimodes}
\begin{indlist}
 \item[\bf Initial step]
 We want to solve 
\begin{equation}
 \label{eqn:intl step regular}
 (P-\lambda) u \in h\Cinf(M).
\end{equation}
 By the leading part and model operator lemma this is equivalent to \mbox{$[(P-\lambda)u]_X=0$} and then to
 $$ (P_0-\lambda_0)u_X=0.$$
 Therefore we choose
\begin{align*}
 \lambda_0 &= \text{ an eigenvalue of $P_0$} \\
 u_0 &= \text{ a corresponding eigenfunction}
\end{align*}
then any $u$ having $u_X=u_0$ will solve \eqref{eqn:intl step regular}.
For simplicity we make the\footnote{The method can be adjusted to the case $\dim \Ker(P_0-\lambda_0)>1$. The main difference is that generically, not every eigenfunction $u_0$ of $P_0$ will arise as a limit of quasimodes $u_h$ with $h>0$.} 
\begin{equation}
\label{eqn:simple ev}
 \text{{\bf Assumption:} the eigenspace } \Ker (P_0-\lambda_0)\text{ is one-dimensional.}
\end{equation}
%%%
 \item[\bf Inductive step]
\begin{inductivesteplemma}[regular perturbation]
Let $\lambda_0$, $u_0$ be chosen as in the initial step, and let $k\geq1$.
Suppose $\lambda\in\Cinf(\R_+)$, $u\in\CinfD(M)$ satisfy
$$ (P-\lambda)u \in h^k\Cinf(M)$$
and  $\lambda(0)=\lambda_0$, $u_X=u_0$.
Then there are $\mu\in\R$, $v\in \CinfD(M)$ so that
   $$  (P-\tilde\lambda)\tilde u \in h^{k+1}\Cinf(M)$$
 for $\tilde\lambda=\lambda+h^k\mu,$ $\tilde u=u+h^kv.$ The number
 $\mu$ is unique, and $v_X$ is unique up to adding constant multiples of $u_0$.
\end{inductivesteplemma}
More precisely, $\mu$ and $v_X$ (modulo $\R u_0$) are uniquely determined by $\lambda_0$, $u_0$ and the leading part of $h^{-k}(P-\lambda)u$.
\begin{proof}
 Writing $(P-\lambda)u=h^kf$ and $\tilde\lambda=\lambda+h^k\mu,\ \tilde u=u+h^kv$ we have
\begin{align*}
 (P-\lambdatilde)\utilde &= h^k [f - \mu u + (P-\lambda)v - h^k \mu v]
\end{align*}
This is in $h^{k+1}\Cinf(M)$ if and only if the term in brackets is in $h\Cinf(M)$, which by the leading part and model operator lemma (and by $k\geq1$) is equivalent to
$  f_X-\mu u_X + (P_0-\lambda_0)v_X = 0$, i.e.\ (using $u_X=u_0$) to
\begin{equation}
\label{eqn:f decomposition regular} 
 (P_0-\lambda_0)v_X =  -f_X + \mu u_0\,.
\end{equation}
This equation can be solved for $\mu$, $v_X$ by applying Lemma \ref{lem:orth decomp} to $g=-f_X$, since $\Ker(P_0-\lambda_0)=\{\mu u_0:\,\mu\in\R\}$ by \eqref{eqn:simple ev}. Having $v_X$ we extend it to a smooth function $v$ on $M$.
Lemma \ref{lem:orth decomp} also gives the uniqueness of $\mu$ and the uniqueness of $v_X$ modulo multiples of $u_0$.
\end{proof}

\end{indlist}

The initial and inductive steps give eigenvalues and quasimodes to any order $h^N$, and this is good enough for all purposes. It is still nice to go to the limit and also consider uniqueness. We get the final result:
\begin{theorem}[quasimodes for regular perturbation]
\label{thm:regular quasimodes}
Assume the setup of a regular perturbation as described in Section \ref{subsec:reg pert setup}.
 Given a simple eigenvalue $\lambda_0$ and associated eigenfunction $u_0$ of $P_0$, there are $\lambda\in\Cinf(\R_+)$, $u\in \CinfD(M)$  satisfying 
 $$(P-\lambda)u \in h^\infty \Cinf(M)$$
and
$$ \lambda(0)=\lambda_0,\quad u_X=u_0\,.$$
Furthermore, $\lambda$ and $u$ are unique in Taylor series at $h=0$, 
up to replacing $u$ by $a(h)u$ where $a$ is smooth and $a(0)=1$.
\end{theorem}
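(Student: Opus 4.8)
\emph{Existence.} The plan is to run the iterative scheme of the two preceding subsections and then pass to the limit by a Borel summation. I would start from the initial step: using the simplicity assumption \eqref{eqn:simple ev}, pick any $\lambda^{(1)}\in\Cinf(\R_+)$ with $\lambda^{(1)}(0)=\lambda_0$ and any extension $u^{(1)}\in\CinfD(M)$ of $u_0$, so that $(P-\lambda^{(1)})u^{(1)}\in h\Cinf(M)$ and $\lambda^{(1)}(0)=\lambda_0$, $u^{(1)}_X=u_0$. Then I would apply the Inductive step lemma repeatedly: having $\lambda^{(k)},u^{(k)}$ with $(P-\lambda^{(k)})u^{(k)}\in h^k\Cinf(M)$ and the right data at $h=0$, it yields $\mu_k\in\R$ and $v_k\in\CinfD(M)$ so that $\lambda^{(k+1)}:=\lambda^{(k)}+h^k\mu_k$ and $u^{(k+1)}:=u^{(k)}+h^kv_k$ push the error into $h^{k+1}\Cinf(M)$ while keeping $\lambda^{(k+1)}(0)=\lambda_0$, $u^{(k+1)}_X=u_0$ (the corrections vanish at $X$, and $v_k$ is taken to vanish at $\dDir M$). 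Next I would Borel-sum: by the classical Borel lemma there is $\lambda\in\Cinf(\R_+)$ with Taylor series $\lambda_0+\sum_{k\ge1}\mu_kh^k$, and by a standard cut-off summation of the series $u^{(1)}+\sum_{k\ge1}h^kv_k$ there is $u\in\CinfD(M)$ (the Dirichlet condition survives because the cut-offs depend on $h$ only) with $u-u^{(N)}\in h^N\Cinf(M)$ for every $N$. Finally, for each $N$ I would write
\[
 (P-\lambda)u=(P-\lambda^{(N)})u^{(N)}+(P-\lambda^{(N)})(u-u^{(N)})-(\lambda-\lambda^{(N)})u,
\]
observe that all three terms lie in $h^N\Cinf(M)$ (using that $P$ preserves the order of vanishing at $h=0$), and conclude $(P-\lambda)u\in h^\infty\Cinf(M)$ since $N$ is arbitrary. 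By construction $\lambda(0)=\lambda_0$ and $u_X=u_0$.

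\emph{Uniqueness.} Let $(\lambda,u)$ and $(\tilde\lambda,\tilde u)$ be two solutions built from the same $\lambda_0,u_0$. The observation I would use is that $P$ commutes with multiplication by any function of $h$ alone, so for $b\in\Cinf(\R_+)$ one has $(P-\lambda)(bu)=b\,(P-\lambda)u\in h^\infty\Cinf(M)$; thus $(\lambda,bu)$ is again a solution, with the same $\lambda$. I would then construct inductively reals $a_1,a_2,\dots$ such that, writing $a^{(k)}(h):=1+\sum_{j=1}^ka_jh^j$, one has $\tilde\lambda-\lambda\in h^{k+1}\Cinf(\R_+)$ and $\tilde u-a^{(k)}u\in h^{k+1}\CinfD(M)$; the base case $k=0$ is immediate from the shared data at $h=0$ together with Lemma \ref{lem:divide by bdf}. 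For the step I would replace $u$ by $u':=a^{(k-1)}u$, use $(P-\lambda)u'\in h^\infty\Cinf(M)$ and $(P-\tilde\lambda)\tilde u\in h^\infty\Cinf(M)$ to get $-(P-\lambda)w+\nu\tilde u\in h^\infty\Cinf(M)$ with $w:=h^{-k}(\tilde u-u')\in\CinfD(M)$ and $\nu:=h^{-k}(\tilde\lambda-\lambda)$, and then restrict to $h=0$ via the leading part and model operator lemma to obtain
\[
 (P_0-\lambda_0)w_X=\nu(0)\,u_0.
\]
By Lemma \ref{lem:orth decomp} this forces $\nu(0)\,u_0\perp\Ker(P_0-\lambda_0)=\R u_0$, i.e.\ $\nu(0)=0$, so $\tilde\lambda-\lambda\in h^{k+1}\Cinf(\R_+)$; and then $w_X\in\Ker(P_0-\lambda_0)$, say $w_X=a_ku_0$, which gives $\tilde u-a^{(k)}u=h^k(w-a_ku)\in h^{k+1}\CinfD(M)$. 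Borel-summing the $a_j$ into $a\in\Cinf(\R_+)$ with $a(0)=1$ then yields $\tilde u-a(h)u\in h^\infty\Cinf(M)$ and $\tilde\lambda-\lambda\in h^\infty\Cinf(\R_+)$, which is exactly the uniqueness assertion.

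\emph{Where the difficulty lies.} I do not expect a real obstacle here: no new analysis enters beyond the Inductive step lemma (hence Lemma \ref{lem:orth decomp}) and Borel summation. The points that need care are purely organizational — verifying that the Dirichlet condition is preserved both along the iteration and under the Borel summation, and arranging the uniqueness induction so that the multiplicative freedom $u\mapsto a(h)u$ is absorbed at the correct order. It is also worth recording why "in Taylor series" cannot be dropped: one may add to $u$ any element of $h^\infty\CinfD(M)$ and to $\lambda$ any element of $h^\infty\Cinf(\R_+)$ without affecting $(P-\lambda)u\in h^\infty\Cinf(M)$ or the normalization, and such terms are invisible to the Taylor expansion at $h=0$.
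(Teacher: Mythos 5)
Your proposal is correct and follows essentially the same route as the paper: iterate the initial and inductive steps, Borel-sum, and prove uniqueness by an order-by-order induction that absorbs the scalar freedom $c\,h^ku_0$ into $a^{(k)}$ before a final asymptotic summation. The only cosmetic difference is that you re-derive the uniqueness at each order directly from Lemma \ref{lem:orth decomp} and the leading part lemma, whereas the paper simply invokes the uniqueness clause of the inductive step lemma — the content is the same.
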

Clearly, $u$ cannot be unique beyond what is stated. 
\begin{proof}
Let $u_{(k)}$, $\lambda_{(k)}$ be as obtained in the initial step (if $k=0$) or the inductive step (if $k\geq1$), respectively. 
Then $u_{(k+1)}=u_{(k)} + O(h^k)$, $\lambda_{(k+1)}=\lambda_{(k)}+ O(h^k)$ for all $k$ by construction, so by asymptotic summation (Borel Lemma, cf.\ Lemma \ref{lem:borel})  we obtain $\lambda, u$ as desired.

To prove uniqueness, we show inductively that for $\lambda,\lambda'$ and $u,u'$ having the same leading terms, the assumptions
$ (P-\lambda)u\in h^k\Cinf(M),\ (P-\lambda')u'\in h^k\Cinf(M)$ imply that
$ \lambda-\lambda' = O(h^k)$ and $u-a_{(k)}(h)u'\in h^k\CinfD(M)$ for a smooth function $a_{(k)}$, $a_{(k)}(0)=1$.

For $k=1$ there is nothing to prove. Suppose the claim is true for $k$, and let
$(P-\lambda)u\in h^{k+1}\Cinf(M),\ (P-\lambda')u'\in h^{k+1}\Cinf(M)$. By the inductive hypothesis, we have $ \lambda-\lambda' = O(h^k)$ and $u-a_{(k)}(h)u'\in h^k\CinfD(M)$. Since the leading terms of $h^{-k}(P-\lambda)u$ and $h^{-k} (P-\lambda')u'$ both vanish, the uniqueness statement in the inductive step lemma implies that $\lambda-\lambda'=O(h^{k+1})$ and $u-a_{(k)}(h)u' - ch^ku_0\in h^{k+1}\CinfD(M)$ for some $c\in\R$. Then $a_{(k+1)}(h)=a_{(k)}(h)+ch^{k}$ satisfies $u-a_{(k+1)}(h)u'\in h^{k+1}\CinfD(M)$.
Now define $a$ from the $a_{(k)}$ by asymptotic summation.
% normalization proof possible, but for generalization it is better to argue in the calculus
%Uniqueness follows via normalization: Consider the $L^2(X)$-norm, $\|u\|$, of $u\in\Cinf(M)$. This is a smooth function of $h$. 
%Then we need to show that $\lambda, u$ are unique in Taylor series, assuming $\|u\|=1+O(h^\infty)$. 
%This follows inductively over the order of finite Taylor polynomials. The main point is that if in the inductive step lemma one starts with $u$ satisfying $\|u\|=1+O(h^k)$ then $\|\utilde\|^2 = \|u\|^2 + 2h^k\langle u,v\rangle + O(h^{k+1}) = \|u\|^2+2h^k\langle u_0,v_X\rangle + O(h^{k+1})$, so there is a unique choice for $v_X$ making $\|\utilde\|=1+O(h^{k+1})$.
%

%The reality proof follows the standard pattern:
%\begin{align*}
% (\lambda-\lambdabar)\|u\|^2 &= \langle (P-\lambdabar)u,u\rangle - \langle (P-\lambda)u,u\rangle \\
%& = \langle u,(P-\lambda)u\rangle - \langle (P-\lambda)u,u\rangle = O(h^\infty)
%\end{align*}
%implies $\lambda-\lambdabar=O(h^\infty)$, so replacing $\lambda$ by $\Re\, \lambda$ only changes it by $O(h^\infty)$, hence $\Re \lambda$ is also a quasi-eigenvalue.
\end{proof}

\subsection{Explicit formulas}
\label{sec:reg explicit}
The proof of Theorem \ref{thm:regular quasimodes} is constructive: it gives a method for finding $u(x,h)$ and $\lambda(h)$ to any order in $h$, under the assumption that the model problem \eqref{eqn:reg pert model problem} can be solved.
We present two standard alternative ways of doing the calculation.

We use the maps $\Phi_h:X\to \Omegabar_h$, see (A) in Section \ref{subsec:reg pert setup}, where $X=\Omegabar_0$. In fact, only the restriction of $\Phi_h$ to $\dDir X$ is needed, as will be clear from the first method presented below.
\subsubsection{Boundary perturbation}
We will compute $\dot\lambda$, where the dot denotes the first derivative in $h$ at $h=0$. This is the first order perturbation term since 
$\lambda(h) = \lambda(0)+h\dot\lambda + O(h^2)$.
Differentiating the equation $(P-\lambda)u\in h^2\Cinf(M)$ in $h$ at $h=0$ we obtain
\begin{equation}
 \label{eqn:reg perturb explicit 1}
 (\dot P-\dot\lambda)u_0 + (P_0-\lambda_0)\dot u =0\,.
\end{equation}
In our case $\dot P=0$. The boundary condition is
$u(x,h)=0$ for all $x\in\dDir\Omega_h$ and all $h$, so $u(\Phi_h(y),h)=0$ for $y\in\dDir X$. Differentiating in $h$ yields
the boundary condition for $\dot u$:
$$ Vu_0+\dot u = 0 \quad\text{on } \dDir X$$
where $Vu_0$ is the derivative of $u_0$ in the direction of the vector field $V=(\partial_{h}\Phi_h)_{|h=0}$. Now take the $L^2(X)$ scalar product of \eqref{eqn:reg perturb explicit 1} with $u_0$. 
We write the second summand using
Green's formula as
$$\langle (P_0-\lambda_0)\dot u,u_0\rangle = \int_{\dDir X} \left(-\partial_n \dot u\cdot u_0 + \dot u\cdot \partial_n u_0\right)\, dS + \langle \dot u, (P_0-\lambda_0)u_0\rangle $$
where $\partial_n$ denotes the outward normal derivative. Using $u_{0|\dDir X}=0$, $(P_0-\lambda_0)u_0=0$ we obtain 
$$ \dot\lambda = - \frac1{\|u_0\|^2} \int_{\dDir X} Vu_0\cdot \partial_n u_0 \,dS $$
where $\|u_0\|$ is the $L^2(X)$-norm of $u_0$.
Commonly one chooses $\Phi_h$ so that $V=a\partial_n$ for a function $a$ on $\dDir X$. This means that the boundary is perturbed in the vertical direction at velocity $a$. For $L^2$-normalized $u_0$ this yields Hadamard's formula (see \cite{Had:MPAREPEE}) $\lambdadot=- \int_{\dDir X} a (\partial_n u_0)^2 \,dS$. Higher order terms are computed in a similar way.

Note that we did not need to solve the model problem. Its solution is only needed to compute $\dot u$ or higher derivatives of $\lambda$ and $u$.
\subsubsection{Taylor series ansatz}
\label{subsubsec:taylor}
Here is a different method where in a first step all operators are transferred to the $h$-independent space $X$.
Using the maps $\Phi_h:X\to\Omegabar_h$ pull back the operator $P_h$ to $X$:
$$ P_h' = \Phi_h^*P_h\,.$$
Now $P_h'$ is a smooth family of elliptic operators on $X$, so we can write
$$ P_h' \sim P_0+hP_1+\dots$$
Here $P_0$ is the Laplacian on $X$ since $\Phi_0$ is the identity.
We also make the ansatz
$$u\sim u_0+hu_1+\dots,\quad \lambda\sim\lambda_0+h\lambda_1+\dots $$
where all $u_i\in\CinfD(X)$, multiply out the left side of 
$$ (P_0+hP_1+\dots - \lambda_0-h\lambda_1-\dots)(u_0+hu_1+\dots) \sim 0, $$
order by powers of $h$ and equate each coefficient to zero.
The $h^0$ term gives the initial equation
\begin{equation}
 \label{eqn:initial}
 (P_0-\lambda_0) u_0 = 0
\end{equation}
and the $h^k$ term, $k\geq1$, gives the recursive set of equations
\begin{equation}
 \label{eqn:iteration0}
\begin{aligned}
 (P_0-\lambda_0)u_k &= - (P_1-\lambda_1)u_{k-1}-\dots-(P_k- 
\lambda_k)u_0\\
&=: - f_k + \lambda_k u_0
\end{aligned}
\end{equation}
where $f_k$ is determined by $u_0,\dots,u_{k-1}$ and $\lambda_0,\dots,\lambda_{k-1}$.
This is the decomposition of Lemma \ref{lem:orth decomp} for $g=f_k$, so it can be solved for $\lambda_k$, $u_k$.

\smallskip

We can solve \eqref{eqn:iteration0} explicitly as follows:
Taking the scalar product with $u_0$ and using $\langle (P_0-\lambda_0)u_k,u_0\rangle = \langle u_k,(P_0-\lambda_0)u_0\rangle =0$ we get
\begin{equation}
\label{eqn:choice lambdak} 
 \lambda_k = \frac{\langle f_k,u_0\rangle}{\|u_0\|^2} \,,
\end{equation}
for example
$$ \lambda_1 = \frac{\langle P_1 u_0,u_0\rangle}{\|u_0\|^2},\quad \lambda_2 = \frac{\langle (P_1-\lambda_1)u_1 + P_2 u_0,u_0\rangle}{\|u_0\|^2}$$
Here $u_1$, $u_2$ etc. are computed as
$$ u_k = (P_0-\lambda_0)^{-1} (-f_k + \lambda_k u_0)$$
where $(P_0-\lambda_0)^{-1}$ is a generalized inverse of $P_0-\lambda_0$, i.e. a left inverse defined on $\Ran(P_0-\lambda_0)$. The choice \eqref{eqn:choice lambdak} of $\lambda_k$ guarantees that $-f_k + \lambda_k u_0\in\Ran(P_0-\lambda_0)$.

\begin{remark}
This method seems simpler and more effective than the one presented in Section \ref{subsec:reg solution}.  However, in the context of singular perturbations, where several model problems occur, it will pay off to have a geometric view and not to have to write down asymptotic expansions.

The relation between these two methods becomes clearer if we formulate the present one in terms of the operator $P'$ on the space $\Omegabar'=X\times\R_+$. The product structure of $\Omegabar'$ allows us to extend functions on $X$ to functions on $\Omegabar'$ in a canonical way (namely, constant in $h$). This yields the explicit formulas. In comparison, for $\Omegabar$ there is no such canonical extension. 
 
\end{remark}

\subsection{Generalizations}
\label{subsec:reg pert ex}
Theorem \ref{thm:regular quasimodes} generalizes to any smooth family of uniformly elliptic operators $P_h$ with elliptic boundary conditions on a compact manifold with boundary, supposing  $P_0$ is self-adjoint. Note that $P_h$ for $h>0$ need not be self-adjoint. If $P_h$ has complex coefficients then $u$ and $\lambda$ will be complex valued, and if all $P_h$ are self-adjoint then $\lambda$ can be chosen real-valued.

The method in Subsection \ref{subsubsec:taylor} can be formulated abstractly for any family of operators $P_h'$ on a Hilbert space which has a regular Taylor expansion in $h$ as $h\to0$.
Using contour integration one may find  the asymptotics of eigenfunctions and eigenvalues, not just quasimodes, directly and show that they vary smoothly in the parameter $h$ under the simplicity assumption \eqref{eqn:simple ev}. See \cite{Kat:PTLO}. 

\section{Adiabatic limit with constant fibre eigenvalue}
\label{sec:adiab limit const}

The adiabatic limit\footnote{The word adiabatic originally refers to physical systems that change slowly. In their quantum mechanical description structures similar to the ones described here occur, where $x$ corresponds to time and $h^{-1}$ to the time scale of unit changes of the system. This motivated the use of the word adiabatic limit in global analysis in this context.}
 is a basic type of singular perturbation which will be part of all settings considered later. Its simplest instance is the Laplacian on the family of domains
\begin{equation}
\label{eqn:adiab ex0} 
 \Omega_h =(0,1) \times (0,h) \subset\R^2\,.
\end{equation}
Since the domain of the variable $y$ is $(0,h)$ it is natural to use the variable $Y=\frac yh\in (0,1)$ instead. Then 
\begin{equation}
\label{eqn:adiab ex0 Delta} 
 \Delta = \partial_x^2+\partial_y^2 = h^{-2} \partial_Y^2 + \partial_x^2\,.
\end{equation}
%and the Dirichlet eigenvalues of $-\Delta$ are $\lambda_{k,l} = h^{-2}\pi^2l^2 + \pi^2k^2$.

Although it is not strictly needed for understanding the calculations below, we explain how this is related to blow-up,
in order to prepare for later generalizations:
The closure of the total space $\Omega=\bigcup_{h>0} \Omega_h\times\{h\}\subset\R^2\times\R_+$ has a singularity (an edge) at $h=0$.\footnote{%
\label{footnote:not dsubman}
The precise meaning of this is that $\Omegabar$ is not a d-submanifold of $\R^2\times\R_+$, as defined before Definition \ref{def:resol subsets}. This is what distinguishes it from a regular perturbation. Note that $\Omegabar$ happens to be a submanifold with corners of $\R^3$, but this is irrelevant here.}
This singularity can be resolved by blowing up the $x$-axis $L=\{y=h=0\}$ in $\R^2\times\R_+$. 
If $\beta:[\R^2\times\R_+,L]\to\R^2\times\R_+$ is the blow-down map then the lift
$$ M = \beta^*\Omegabar$$
is contained in the domain of the projective coordinates system $x,Y=\frac yh, h$, compare Figure \ref{fig:proj coords examples}(d). In these coordinates the set $M$ is given by $x\in[0,1], Y\in[0,1], h\in\R_+$. See Figure \ref{fig:adiab limit b-up}.
Note that the operators $\Delta$ turn into the \lq singular' family of operators \eqref{eqn:adiab ex0 Delta} on $M$.
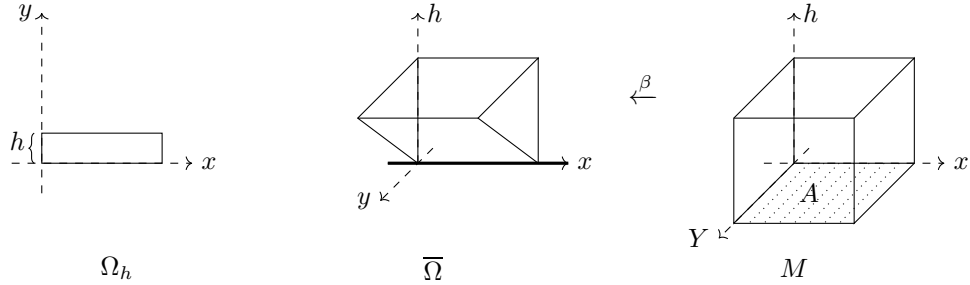
\begin{figure}

\begin{tikzpicture}

\def\xmin{0}
\def\xmax{.8}
\def\ymax{.8}
\def\yh{.2}  % example for h

\begin{scope}[xshift=-5cm,scale=2]

 \draw[dashed,->] (-.2,0,0) -- (1,0,0) node[right]{$x$};
 \draw[dashed,->] (0,-.2,0) -- (0,1,0) node[left]{$y$};

 \draw (\xmin,0) -- (\xmin,\yh) -- (\xmax,\yh) -- (\xmax,0) -- cycle;
 
 \draw[decorate,decoration={brace,amplitude=2pt}]
(-.05,0) -- (-.05,.2) node [near end,left]{$h$};

% \draw (0,\yh) node[left]{$h$} -- (.1,\yh);
 
 \node at (.5, -.7){$\Omega_h$};
\end{scope}

\begin{scope}[scale=2,x={(1cm,0)},y={(-.5cm,-.5cm)},z={(0,1cm)}]

 \draw[dashed,->] (-.2,0,0) -- (1,0,0) node[right]{$x$};
 \draw[dashed,->] (0,-.2,0) -- (0,.5,0) node[left]{$y$};
 \draw[dashed,->] (0,0,0) -- (0,0,1) node[right]{$h$}; 
 
 \draw[very thick] (-.2,0,0) -- (1,0,0);
 
% singular subset
 \draw(\xmin,0,0) -- (\xmin,\ymax,.7) -- (\xmin,0,.7) -- cycle;
 \draw (\xmax,0,0) -- (\xmax,\ymax,.7) -- (\xmax,0,.7) -- cycle;
 \draw (\xmin,\ymax,.7) -- (\xmax,\ymax,.7);
 \draw (\xmin,0,.7) -- (\xmax,0,.7); 

 \node at (.1,0, -.7){$\Omegabar$};

\end{scope}

  \node at (3,1) {$\xleftarrow{\beta}$};
  
\begin{scope}[xshift=5cm,scale=2,x={(1cm,0)},y={(-.5cm,-.5cm)},z={(0,1cm)}]

 \draw[dashed,->] (-.2,0,0) -- (1,0,0) node[right]{$x$};
 \draw[dashed,->] (0,-.2,0) -- (0,1,0) node[left]{$Y$};
 \draw[dashed,->] (0,0,0) -- (0,0,1) node[right]{$h$};

% resolved singular subset
 \draw(\xmin,0,0) -- (\xmin,\ymax,0) -- (\xmin,\ymax,.7) --  (\xmin,0,.7) -- cycle;
 \draw (\xmax,0,0) -- (\xmax,\ymax,0) -- (\xmax,\ymax,.7) -- (\xmax,0,.7) -- cycle;
 \draw (\xmin,\ymax,.7) -- (\xmax,\ymax,.7);
 \draw (\xmin,0,.7) -- (\xmax,0,.7); 
 \draw (\xmin,\ymax,0) -- (\xmax,\ymax,0);
 \draw (\xmin,0,0) -- (\xmax,0,0);
 
  \foreach \x in {.1,.2,.3,.4,.5,.6,.7} % adjust if xmax is changed
 {
 \draw[dotted] (\x,0,0) -- (\x,\ymax,0);
 }

 \node at (\xmax/2-.1, \ymax/2,0) {$A$};

  \node at (0,0,-.7){$M$};

\end{scope}

\end{tikzpicture}

 \caption{Domain $\Omega_h$, total space $\Omegabar$ and resolution $M$ of $\Omegabar$ for adiabatic limit. Compare Figure \ref{fig:proj coords examples}(d). On the right only the part of the blown-up space $[\R^2\times\R_+,\{y=h=0\}]$ where the \lq top\rq\ projective coordinates $h,Y=\frac yh$ are defined is shown. Dotted lines are fibres of the natural fibration of the front face $A$.}
 \label{fig:adiab limit b-up}
\end{figure}

This example, and the generalization needed in Section \ref{sec:adiab variable}, motivates considering the following setting. See Section \ref{subsec:adiab examples} for more examples where this setup occurs.

\subsection{Setup}
Suppose $B,F$ are compact manifolds, possibly with boundary. For the purpose of this article you may simply take $B,F$ to be closed intervals
(but see Subsection \ref{subsubsec:adiab noncompact} for a generalization needed later).
We consider a family of differential operators depending on $h>0$
\begin{equation}
\label{eqn:P adiabatic} 
 P(h) \sim h^{-2}P_F + P_0+hP_1+\dots
\end{equation}
on $A=B\times F$. We assume
\begin{equation}
\label{eqn:adiab assn F} 
 P_F \ \text{ is a self-adjoint elliptic operator on $F$}
\end{equation}
where boundary conditions are imposed if $F$ has boundary.\footnote{Formally it would be more correct to write $\Id_B\otimes P_F$ instead of $P_F$ in \eqref{eqn:P adiabatic}, but here and in the sequel we will use the simplified notation.}
 For example, if $F=[0,1]$ then we could take $P_F=-\partial_Y^2$ with Dirichlet boundary conditions. $P_0,P_1,\dots$ are differential operators on $A$. 
A condition on $P_0$ will be imposed below, see equations \eqref{eqn:PB def}, \eqref{eqn:PB assn}. 

One should think of $A$ as the union of the fibres (preimages of points) of the projection $\pi:A=B\times F\to B$, i.e.\ $A=\bigcup\limits_{x
\in B}\{x\}\times F$, see also Remark \ref{rem:adiab const fibres} below. We call $F$ the fibre and $B$ the base. The analysis below generalizes to the case of fibre bundles $A\to B$, see Section \ref{subsec:adiab-generaliz}. The letter $A$ is used for \lq adiabatic limit\rq.

We will denote coordinates on $B$ by $x$ and on $F$ by $Y$. This may seem strange but serves to unify notation over the whole article, since this notation is natural in the following sections.

\subsection{What to expect: the product case}
To get an idea what happens, we consider the case of a product operator, i.e.
$$ P(h) = h^{-2}P_F + P_B$$
where
$P_B$, $P_F$ are second order  elliptic operators on $B$ and $F$, self-adjoint with given boundary conditions. An example is  \eqref{eqn:adiab ex0}, \eqref{eqn:adiab ex0 Delta} where $B=F=[0,1]$ and $P_B=-\partial_x^2$, $P_F=-\partial_Y^2$. More generally, $P_B$, $P_F$ could be the Laplacians on compact Riemannian manifolds $(B,g_B)$, $(F,g_F)$.  Then $P$ would be the Laplacian on $A$ with respect to the metric $h^2 g_F \oplus g_B$ in which the lengths in $F$-direction are scaled down by the factor $h$. 

By separation of variables $P(h)$ has the eigenvalues $\lambda_{k,l}=h^{-2}\lambda_{F,k}+\lambda_{B,l}$ where $\lambda_{F,k}$, $\lambda_{B,l}$ are the eigenvalues of $P_F$, $P_B$ respectively, with eigenfunctions\footnote{For functions $\phi:B\to\R$ and $\psi:F\to\R$ we write $\phi\otimes\psi:B\times F\to \R$, $(x,Y)\mapsto \phi(x)\psi(Y)$.
}
 $\phi_k\otimes\psi_l$.

Although we have solved the problem, we now rederive the result using  formal expansions, in order to distill from it essential features that will appear in the general case.
We make the ansatz
$$  u= u_0+hu_1+\dots,\ \lambda=h^{-2}\lambda_{-2}+\dots $$
and plug in
\begin{equation}
\label{eqn:adiab expansion} 
 (h^{-2}P_F+P_B  - h^{-2}\lambda_{-2}-h^{-1}\lambda_{-1}-\lambda_0-\dots)
(u_0+hu_1+h^2u_2+\dots) = 0.
\end{equation}
The $h^{-2}$ term gives
\begin{equation}
 \label{eqn:initial adiab}
 (P_F-\lambda_{-2})u_0=0
\end{equation}
so $\lambda_{-2}$ must be an eigenvalue of $P_F$. Suppose it is simple and let $\psi$ be a normalized eigenfunction. It follows that
$$u_0(x,Y) = \phi(x) \psi(Y) $$
for some yet unknown function $\phi$. How can we find $\phi$?
%Define
%\begin{equation}
% \label{eqn:def Pi}
% \Pi : \Cinf(A) \to \Cinf (B),\quad v \mapsto \langle v,\psi_0\rangle_F
%\end{equation}
%so that $v \mapsto (\Pi v)\psi_0$ is projection to the $\lambda_{-2}$ eigenspace in $\{x\}\times F$, for each fixed $x\in B$. Clearly
%
%
The $h^{-1}$ term gives $(P_F-\lambda_{-2})u_1 = \lambda_{-1}u_0$. Taking the scalar product with $u_0$ and using self-adjointness of $P_F$ we get $\lambda_{-1}=0$.
% and then $u_1=U_1(x)\psi(Y)$ by orthogonality in the fibres. 
The $h^0$ term then gives
$$ (P_F-\lambda_{-2})u_2 = -(P_B-\lambda_0)u_0.$$
By Lemma \ref{lem:orth decomp}, applied to $P_F$ for fixed $x\in B$, this has a solution $u_2$ if and only if
\begin{equation}
 \label{eqn:adiab orth cond}
 (P_B-\lambda_0) u_0(x,\cdot) \perp \psi\quad\text{ in } L^2(F)\ \text{ for each }x\in B.
\end{equation}
Now the left side is $[(P_B-\lambda_0)\phi(x)]\, \psi$, so we get
$$ (P_B-\lambda_0)\phi = 0\ \text{ on }B.$$
Thus, $\lambda_0$ is an eigenvalue of $P_B$ with eigenfunction $\phi$. This solves the problem since $\phi\otimes\psi$ is clearly an eigenfunction of $P(h)$ with eigenvalue $h^{-2}\lambda_{-2}+\lambda_0$.

From these considerations, we see basic features of the adiabatic problem:
\begin{itemize}
 \item $\lambda_{-2}$ is an eigenvalue of the fibre operator $P_F$.
 \item $\lambda_0$ is an eigenvalue of the base operator $P_B$.
 \item The leading term of the eigenfunction, $u_0$, is the tensor product of the eigenfunctions on fibre and base. It is determined from the  two \lq levels', $h^{-2}$ and $h^0$ of \eqref{eqn:adiab expansion}.
\end{itemize}

For a general operator \eqref{eqn:P adiabatic} we cannot separate variables since $P_0$ (and the higher $P_i$) may involve $Y$-derivatives (or $Y$-dependent coefficients). However, the \lq adiabatic\rq\ structure of $P(h)$ still allows separation of variables to leading order:
The $h^{-2}$ term of \eqref{eqn:adiab expansion} still yields $u_0=\phi\otimes\psi$ and the $h^{-1}$ term yields $\lambda_{-1}=0$. The $h^0$ term now 
yields condition \eqref{eqn:adiab orth cond} with $P_B$ replaced by $P_0$. This shows that $\phi$ must be an eigenfunction of the operator
$$U \mapsto (\Pi \circ P_0) (U\otimes \psi)$$
where $\Pi u=\langle u,\psi\rangle_F$ is the $L^2(F)$ scalar product with $\psi$. This motivates the definition of the horizontal operator $P_B$ below.

\subsection{Solution}
The solution of the formal expansion equation \eqref{eqn:adiab expansion} is complicated by the fact that a single $u_i$ is only determined using several $h^k$. 
It is desirable
to avoid this, in order to easily progress to more complex problems afterwards. Thus, we need a procedure where consideration of a fixed $h^k$ gives full information on the corresponding next term in the $u$ expansion.

This can be achieved by redefining the function space containing the remainders $f=(P-\lambda)u$ in the iteration, as well as their notion of leading part. 

As before, we consider a family $(u_h)_{h\geq0}$ of functions on $B\times F$ as one function on the total space 
$$M= B\times F\times \R_+$$ 
and consider a differential operator $P$ acting on functions on $M$ and having an expansion as in \eqref{eqn:P adiabatic}. 
Let
$$ A :=B\times F\times\{0\}$$
be the boundary at $h=0$ of $M$.

\subsubsection{A priori step: Fixing a vertical mode. The horizontal operator.}
{A priori} we fix 
\begin{equation}
 \label{eqn:adiabatic a priori}
\begin{aligned}
 \lambda_{-2} &= \text{a simple eigenvalue  of } P_F\\
 \psi &= \text{an $L^2(F)$-normalized corresponding eigenfunction}.
\end{aligned}
\end{equation}
  We will seek (quasi-)eigenvalues of $P$ of the form $h^{-2}\lambda_{-2}+\Cinf(\R_+)$.

  Every $f\in \Cinf(F)$ may be decomposed into a $\psi$ component and a component perpendicular to $\psi$:
\begin{equation}
 \label{eqn:fibre-decomp}
 f = \langle f,\psi \rangle_{F}\,\psi + f^\perp, \quad f^\perp \perp_F \psi
\end{equation}
where $\langle\ ,\ \rangle_F$ is the $L^2(F)$ scalar product.
The same formula defines a fibrewise decomposition of $f$ in $\Cinf(A)$ or in $h^k\Cinf(M)$, $k\in\Z$.
The coefficient of $\psi$ defines projections
\begin{equation*} 
\begin{aligned}
\Pi:&\ \Cinf(A)\to\Cinf(B) \\
\Pi:&\  h^k\Cinf(M) \to h^k\Cinf(B\times \R_+) 
\end{aligned}
\qquad
 f \mapsto \langle f,\psi\rangle_{F}
\end{equation*}
By self-adjointness of $P_F$
\begin{equation}
\label{eqn:Pi P_F} 
 \Pi \circ (P_F - \lambda_{-2}) = 0
\end{equation}
on the domain of $P_F$.
Motivated by the consideration at the end of the previous section we define the {\bf horizontal operator}\footnote{$P_B$ is also called the {\em effective Hamiltonian}, e.g. in \cite{Teu:APTQD}.}
\begin{equation}
\label{eqn:PB def} 
 P_B : \CinfD(B)\to \Cinf(B),\quad
U \mapsto \Pi P_0 (U\otimes\psi).
\end{equation}
We can now formulate the assumption on $P_0$:
\begin{equation}
\label{eqn:PB assn} 
\text{$P_B$ is a self-adjoint elliptic differential operator on $B$}
\end{equation}
where self-adjointness is with respect to some fixed density on $B$ and given boundary conditions.
This notation is consistent with the use of $P_B$ in the product case.
%%%%%%%%%%%%%%%
\subsubsection{Function spaces, leading part and model operator}
We will seek quasimodes $u$  in the {\bf solution space} $\CinfD(M)$, the space of smooth functions on $M$ satisfying the boundary conditions. The {\bf leading part} of $u\in\CinfD(M)$ is defined to be
$$u_A:=u_{|h=0} \in\CinfD(A).$$

The following definition captures the essential properties of the remainders $f=(P-\lambda)u$ arising in the iteration.

\begin{definition}
\label{def:adiab limit rem space} 
The {\bf remainder space} for the adiabatic limit is
\begin{align*}
 \calR (M) &:= \{ f\in h^{-2}\Cinf(M):\, \Pi f\ \text{is smooth at }h=0\}\\
 & \phantom{:}= \{f = h^{-2}f_{-2}+h^{-1}f_{-1}+\dots:\ \ \Pi f_{-2} = \Pi f_{-1}=0\}. 
\end{align*}
 The {\bf leading part} of $f\in \calR(M)$, $f=h^{-2}f_{-2}+h^{-1}f_{-1}+\dots$ is\footnote{The notation $f_{AB}$ is meant to indicate that the leading part has components which are functions on $A$ and on $B$.
}
 $$ f_{AB} := 
\begin{pmatrix}
 f_{-2} \\ \Pi f_0 
\end{pmatrix}
\ 
\in \Cinf(A)_{\Pi^\perp}\oplus \Cinf(B)
$$
where 
$$\Cinf(A)_{\Pi^\perp} :=\{v\in\Cinf(A):\,\Pi v=0\}\,.$$
\end{definition}
\noindent For functions in the solution space we clearly have:
$$ \text{Let }u\in\CinfD(M). \text{ Then }u\in h\CinfD(M) \iff u_A=0\,.$$
The definition of the leading part of $f\in\calR(M)$ is designed to make the corresponding fact for $f$ true:
\begin{leading part and model operator lemma}[adiabatic limit]
\mbox{}
\begin{enumerate}
 \item[a)]
 If $f\in\calR(M)$ then 
 $$f\in h\calR(M)\ \text{ if and only if }\  f_{AB}=0.$$ 
 \item[b)]
For $\lambda\in h^{-2}\lambda_{-2}+\Cinf(\R_+)$ we have
\begin{equation}
\label{eqn:P-l mapping} 
 P-\lambda : \CinfD(M) \to \calR(M)
\end{equation}
and
\begin{equation}
\label{eqn:model op adiabatic}
[(P-\lambda)u]_{AB} = \begin{pmatrix}
 (P_F-\lambda_{-2})u_A\\
 \Pi (P_0-\lambda_0)u_A
\end{pmatrix}
\end{equation}
where  $\lambda_0$ is the constant term of $\lambda$.
\end{enumerate}
\end{leading part and model operator lemma}
The operator $(P-\lambda)_A:= \begin{pmatrix}
 P_F-\lambda_{-2}\\
 \Pi (P_0-\lambda_0)
\end{pmatrix}
$ is called the {\bf model operator} for $P-\lambda$ at $A$.
\begin{proof}
\mbox{}
\begin{enumerate}
 \item[a)]
 Let $f=h^{-2}f_{-2}+h^{-1}f_{-1}+f_0+\dots$ with $\Pi f_{-2} = \Pi f_{-1}=0$.
 Suppose $f_{AB}=0$, so $f_{-2}=0$ and $\Pi f_0=0$. Then  
 $f=h^{-1}f_{-1} + f_0+O(h)$ with $\Pi f_{-1}=\Pi f_0=0$, so $f\in h\calR(M)$. The converse is obvious.
\item[b)]
If $u\in\CinfD(M)$ then $(P-\lambda)u=h^{-2}(P_F-\lambda_{-2})u + (P_0-\lambda_0)u + O(h)$ is in $\calR(M)$ by \eqref{eqn:Pi P_F}, and then the definition of leading part implies \eqref{eqn:model op adiabatic}.
\end{enumerate}
\end{proof}
 
\begin{remark}
 In the uniform notation of Section \ref{sec:main steps}, see \eqref{eqn:exact sequences}, \eqref{eqn:comm diagram model op}, we have $\partial_0 M=A$ and
 $\calE(M) = \CinfD(M)$, $\calE(\partial_0 M)=\CinfD(A)$, $\calR(M)$ is defined in Definition \ref{def:adiab limit rem space}, $\calR(\partial_0 M) = \Cinf(A)_{\Pi^\perp}\oplus \Cinf(B)$, and $\LP(u)=u_A$, $\LP(f)=f_{AB}$, $(P-\lambda)_0=(P-\lambda)_A$.
\end{remark}

%%%%%%%%%%%%%%%%
\subsubsection{Analytic input for model operator}
For the iterative construction of quasimodes we need the solution properties of the model operator, analogous to Lemma \ref{lem:orth decomp}. The main additional input is the triangular structure of the model operator, equation \eqref{eqn:adiab model triangular} below.

 By definition 
 $$(P-\lambda)_A:\CinfD(A) \to \Cinf(A)_{\Pi^\perp}\oplus \Cinf(B)$$ 
In the proof below it will be important to decompose functions $v\in\CinfD(A)$ into their fibrewise $\Pi^\perp$ and $\Pi$ components. More precisely, the decomposition \eqref{eqn:fibre-decomp} defines an isomorphism
\begin{equation}
 \label{eqn:adiab pi piperp decomp}
\CinfD(A) \cong \CinfD(A)_{\Pi^\perp} \oplus\CinfD(B)\,,\quad v\mapsto (v^\perp, \Pi v)
\end{equation}
so that $v= v^\perp + (\Pi v) \otimes \psi$.
\begin{lemma}
\label{lem:adiab analysis}
 Let $P$ be an operator on $M$ having an expansion as in \eqref{eqn:P adiabatic}, \eqref{eqn:adiab assn F}, and assume $P$ and $\lambda_{-2}\in\R$, $\psi\in\CinfD(F)$ satisfy \eqref{eqn:adiabatic a priori}, \eqref{eqn:PB assn}.
 
 Then for each $g\in \Cinf(A)_{\Pi^\perp} \oplus\Cinf(B)$ and $\lambda_0\in\R$ there is a unique $\gamma\in \Ker(P_B-\lambda_0)\subset\CinfD(B)$ so that the equation
 $$ (P-\lambda)_A v = g + 
\begin{pmatrix}
 0 \\ \gamma
\end{pmatrix}
 $$
 has a solution $v\in\CinfD(A)$. This solution is unique up to adding 
 $w\otimes\psi$ where $w\in\Ker(P_B-\lambda_0)$.
\end{lemma}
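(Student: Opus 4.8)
The plan is to exploit the block-triangular structure of the model operator with respect to the fibrewise splitting \eqref{eqn:adiab pi piperp decomp}, which reduces the system to two problems of the type already handled by Lemma \ref{lem:orth decomp}: one on the fibre $F$ (with a parameter ranging over $B$), and one on the base $B$. Write $g=\binom{g_1}{g_2}$ with $g_1\in\Cinf(A)_{\Pi^\perp}$ and $g_2\in\Cinf(B)$, and decompose the unknown via \eqref{eqn:adiab pi piperp decomp} as $v=v^\perp+\phi\otimes\psi$ with $v^\perp\in\CinfD(A)_{\Pi^\perp}$ and $\phi:=\Pi v\in\CinfD(B)$. Since $(P_F-\lambda_{-2})\psi=0$, using $\Pi(\phi\otimes\psi)=\phi$ and the definition \eqref{eqn:PB def} of $P_B$, the equation $(P-\lambda)_A v = g+\binom{0}{\gamma}$ is equivalent to the pair
\begin{align*}
 (P_F-\lambda_{-2})\,v^\perp &= g_1,\\
 (P_B-\lambda_0)\,\phi + \Pi P_0\,v^\perp &= g_2+\gamma.
\end{align*}
In block form this exhibits $(P-\lambda)_A$ as lower triangular with diagonal blocks $P_F-\lambda_{-2}$ (acting fibrewise on $\Pi^\perp$-sections) and $P_B-\lambda_0$, and one solves a triangular system top-to-bottom.

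First I would solve the top equation. For each fixed $x\in B$, $P_F-\lambda_{-2}$ is a self-adjoint elliptic operator on $F$ whose kernel is exactly $\R\psi$ by simplicity of $\lambda_{-2}$, so the orthogonal decomposition underlying Lemma \ref{lem:orth decomp} gives a unique solution $v^\perp(x,\cdot)\perp_F\psi$ in the domain of $P_F$, namely $v^\perp(x,\cdot)=G_F\,g_1(x,\cdot)$ where $G_F$ is the generalized inverse of $P_F-\lambda_{-2}$ on $\psi^{\perp_F}$. The one point that is not pure linear algebra — and which I expect to be the only real obstacle — is the claim that $v^\perp$ so defined is smooth up to the boundary on all of $A$, i.e. that $G_F$ maps $\Cinf(A)_{\Pi^\perp}$ into $\CinfD(A)_{\Pi^\perp}$; this is standard elliptic regularity with a smoothly varying parameter, and is where the hypotheses \eqref{eqn:adiab assn F} enter. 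Note $v^\perp$ is now uniquely determined by $g_1$, with no freedom.

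Finally, with $v^\perp$ fixed, the second equation becomes $(P_B-\lambda_0)\phi=\tilde g+\gamma$ on $B$, where $\tilde g:=g_2-\Pi P_0\,v^\perp\in\Cinf(B)$ is determined. Since $P_B$ is self-adjoint and elliptic by \eqref{eqn:PB assn}, Lemma \ref{lem:orth decomp} applies verbatim and produces a unique $\gamma\in\Ker(P_B-\lambda_0)$ — which lies in $\CinfD(B)$ by elliptic regularity for $P_B$ — for which the equation is solvable, with $\phi$ unique up to adding an element of $\Ker(P_B-\lambda_0)$. Because $v^\perp$, hence $\tilde g$, was already uniquely pinned down, this $\gamma$ is unambiguous. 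Assembling, $v=v^\perp+\phi\otimes\psi$ solves the system, and the only freedom in $v$ is the freedom in $\phi$, i.e. adding $w\otimes\psi$ with $w\in\Ker(P_B-\lambda_0)$, which is precisely the asserted uniqueness statement.
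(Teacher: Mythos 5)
Your argument is correct and is essentially the paper's own proof: both exploit the lower-triangular block structure of $(P-\lambda)_A$ under the splitting \eqref{eqn:adiab pi piperp decomp}, solve the fibre equation first by Lemma \ref{lem:orth decomp} applied to $P_F$ (fibrewise in $x\in B$), and then apply Lemma \ref{lem:orth decomp} to $P_B$ to fix $\gamma$ uniquely and $\Pi v$ up to $\Ker(P_B-\lambda_0)$. Your explicit remark that smooth dependence of $v^\perp$ on the base variable (elliptic regularity with parameter) is the one analytic point is a fair observation the paper leaves implicit, but it does not change the route.
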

\begin{proof}
Decompose $v\in\CinfD(M)$ as in \eqref{eqn:adiab pi piperp decomp}. Then $(P_F-\lambda_{-2})v = (P_F-\lambda_{-2})v^\perp$ and 
$\Pi(P_0-\lambda_0)v = \Pi P_0 v^\perp + (P_B-\lambda_0)\Pi v$ since $\Pi v^\perp=0$ and by definition of $P_B$.  
 Therefore, we may write $(P-\lambda)_A$ as a $2\times2$ matrix:
\begin{equation}
\label{eqn:adiab model triangular}
 (P-\lambda)_A = 
\begin{pmatrix}
 P_F-\lambda_{-2} & 0 \\ & \\
 \Pi P_0 & P_B - \lambda_0
\end{pmatrix}
: \ \ 
\begin{matrix}
\CinfD(A)_{\Pi^\perp}\\ \oplus\\ \CinfD(B) 
\end{matrix}
\ \to \ 
\begin{matrix}
\Cinf(A)_{\Pi^\perp}\\ \oplus\\ \Cinf(B) 
\end{matrix}
\end{equation}
In order to solve $ (P-\lambda)_A v = g + \begin{pmatrix}
 0 \\ \gamma
\end{pmatrix}
$ we write $v=
\begin{pmatrix}
 v^\perp\\ v_\Pi
\end{pmatrix}
$ and $g=
\begin{pmatrix}
 g^\perp\\ g_\Pi
\end{pmatrix}
$ and get the system
\begin{align*}
 (P_F-\lambda_{-2}) v^\perp  &= g^\perp \\
 \Pi P_0 v^\perp + (P_B-\lambda_0)v_\Pi &= g_\Pi + \gamma
\end{align*}
The first equation has a unique solution $v^\perp$ by Lemma \ref{lem:orth decomp} applied to $P_F$.
Then  by Lemma \ref{lem:orth decomp} applied to $P_B$, there is a unique $\gamma\in\Ker(P_B-\lambda_0)$ so that the second equation has a solution $v_\Pi$, and $v_\Pi$ is unique modulo $\Ker(P_B-\lambda_0)$.
\end{proof}

%%%%%%%%%%%%%%%%
\subsubsection{Inductive construction of quasimodes}

We now set up the iteration.
\begin{indlist}
\item[\bf Initial step:] We want to solve 
\begin{equation}
\label{eqn:adiabatic intl step} 
 (P-\lambda)u \in h \calR(M).
\end{equation}
By the leading part and model operator lemma this is equivalent to \mbox{$[(P-\lambda)u]_{AB}=0$} and then to
$$ (P_F-\lambda_{-2})u_A =0,\quad \Pi(P_0-\lambda_0)u_A=0.$$
By \eqref{eqn:adiabatic a priori} the first equation implies $u_A=\phi\otimes\psi$ for some function $\phi$ on $B$, and then the second equation is equivalent to $(P_B-\lambda_0)\phi=0$ by \eqref{eqn:PB def}, so if we choose
\begin{align*}
 \lambda_0 &= \text{an eigenvalue of } P_B \\
 \phi &= \text{a corresponding eigenfunction of $P_B$}
\end{align*}
then any $u$ having $u_A=\phi\otimes\psi$ satisfies \eqref{eqn:adiabatic intl step}. Again, we make the
\begin{equation}
 \label{eqn:adiab PB simple ev}
 \text{{\bf Assumption:} the eigenvalue $\lambda_0$ of $P_B$ is simple}
\end{equation}
From now on, we fix the following data:
$$ \lambda_{-2},\ \lambda_0 \in\R,\quad u_0 := \phi\otimes\psi\in\CinfD(A).$$
\item[\bf Inductive step:]
\begin{inductivesteplemma}[adiabatic limit]
Let $\lambda_{-2}$, $\lambda_0$ and $u_0$ be as above, and let $k\geq1$.
Suppose  $\lambda\in h^{-2}\Cinf(\R_+)$, $u\in \CinfD(M)$ satisfy
 $$(P-\lambda)u\in h^k \calR(M)$$
 and $\lambda = h^{-2}\lambda_{-2}+\lambda_0+O(h)$, $u_A=u_0$. Then there are $\mu\in\R$, $v\in \CinfD(M)$ so that
  $$  (P-\tilde\lambda)\tilde u \in h^{k+1}\calR(M)$$
 for $\tilde\lambda=\lambda+h^k\mu,\ \tilde u=u+h^kv.$
  The number
 $\mu$ is unique, and $v_A$ is unique up to adding constant multiples of $u_0$.
\end{inductivesteplemma}
\begin{proof}
 Writing $(P-\lambda)u=h^kf$ and $\tilde\lambda=\lambda+h^k\mu,\ \tilde u=u+h^kv$ we have
\begin{align*}
 (P-\lambdatilde)\utilde &= h^k [f - \mu u + (P-\lambda)v - h^k \mu v]
\end{align*}
This is in $h^{k+1}\calR(M)$ if and only if the term in brackets is in $h\calR(M)$, which by the initial step and model operator lemma is equivalent to
$[f - \mu u + (P-\lambda)v]_{AB}=0$ and then to
$$ (P-\lambda)_A v_A = -f_{AB} + 
\begin{pmatrix}
 0 \\ \mu\phi
\end{pmatrix}
$$
 where we used $(h^2 u)_{h=0}=0$ and $\Pi u_A=\Pi u_0=\phi$.
Now Lemma \ref{lem:adiab analysis} gives the result.
\end{proof}
\end{indlist}

We obtain the following theorem.
\begin{theorem}[quasimodes for adiabatic limit]
\label{thm:adiab quasimodes}
Suppose the operator $P$ in \eqref{eqn:P adiabatic} satisfies \eqref{eqn:adiab assn F} and \eqref{eqn:PB assn}, where $P_B$ is defined in \eqref{eqn:PB def}.
Given simple eigenvalues $\lambda_{-2}$, $\lambda_0$ of $P_F$, $P_B$ 
with eigenfunctions $\psi$, $\phi$ respectively, there are $\lambda\in h^{-2}\Cinf(\R_+)$, $u\in\CinfD(M)$ satisfying
$$ (P-\lambda)u \in h^\infty\Cinf (M)$$
and
$$ \lambda = h^{-2}\lambda_{-2}+\lambda_0+O(h),\quad u_A = \phi\otimes\psi \,.$$ 
Furthermore, $\lambda$ and $u$ are unique in Taylor series at $h=0$, up to replacing $u$ by $a(h)u$ where $a$ is smooth and $a(0)=1$.
\end{theorem}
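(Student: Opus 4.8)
The plan is to mirror exactly the argument already given for the regular perturbation (Theorem \ref{thm:regular quasimodes}), now using the adiabatic-limit versions of the function spaces, the leading part and model operator lemma, and the inductive step lemma. The Theorem has two parts: existence of $(\lambda,u)$ with the prescribed data, and uniqueness in Taylor series at $h=0$ up to multiplication by $a(h)$ with $a(0)=1$.

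\textbf{Existence.} First I would invoke the initial step: since $\lambda_{-2}$ is a simple eigenvalue of $P_F$ with normalized eigenfunction $\psi$, and $\lambda_0$ is a simple eigenvalue of $P_B$ with eigenfunction $\phi$, setting $u_0=\phi\otimes\psi$ and $\lambda=h^{-2}\lambda_{-2}+\lambda_0$ (as a first approximation) gives $(P-\lambda)u\in h\calR(M)$ for any $u\in\CinfD(M)$ with $u_A=u_0$. Then I would apply the Inductive step lemma (adiabatic limit) repeatedly: for $k=1,2,3,\dots$ it produces $\mu_k\in\R$ and $v_k\in\CinfD(M)$ so that $\lambda_{(k+1)}=\lambda_{(k)}+h^k\mu_k$ and $u_{(k+1)}=u_{(k)}+h^kv_k$ satisfy $(P-\lambda_{(k+1)})u_{(k+1)}\in h^{k+1}\calR(M)$, while preserving $\lambda_{(k)}=h^{-2}\lambda_{-2}+\lambda_0+O(h)$ and $(u_{(k)})_A=u_0$. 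By construction $u_{(k+1)}-u_{(k)}=O(h^k)$ and $\lambda_{(k+1)}-\lambda_{(k)}=O(h^k)$, so by asymptotic summation (Borel lemma, Lemma \ref{lem:borel}, applied also to $\lambda$ on $\R_+$) there exist $u\in\CinfD(M)$ and $\lambda\in h^{-2}\Cinf(\R_+)$ with $u-u_{(k)}\in h^k\CinfD(M)$ and $\lambda-\lambda_{(k)}=O(h^k)$ for all $k$. Since $P-\lambda$ maps $\CinfD(M)\to\calR(M)$ and $\calR(M)\subset h^{-2}\Cinf(M)$, one checks $(P-\lambda)u-(P-\lambda_{(k)})u_{(k)}\in h^{k-2}\Cinf(M)$ for all $k$, hence $(P-\lambda)u\in h^\infty\Cinf(M)$; and the leading terms are $\lambda=h^{-2}\lambda_{-2}+\lambda_0+O(h)$, $u_A=\phi\otimes\psi$.

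\textbf{Uniqueness.} Suppose $(\lambda,u)$ and $(\lambda',u')$ both solve the problem with the same data. I would show by induction on $k$ that $\lambda-\lambda'=O(h^k)$ and $u-a_{(k)}(h)u'\in h^k\CinfD(M)$ for some smooth $a_{(k)}$ with $a_{(k)}(0)=1$. For $k=1$ this is the equality of leading parts $u_A=u'_A=\phi\otimes\psi$ (take $a_{(1)}\equiv1$), together with $\lambda-\lambda'=O(h)$ which holds since both equal $h^{-2}\lambda_{-2}+\lambda_0+O(h)$. For the inductive step, assume $(P-\lambda)u\in h^{k+1}\calR(M)$, $(P-\lambda')u'\in h^{k+1}\calR(M)$ and the $k$-th hypothesis. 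Replacing $u'$ by $a_{(k)}(h)u'$ (which does not change that it is annihilated up to $h^{k+1}$, since $(P-\lambda')(a_{(k)}u')=a_{(k)}(P-\lambda')u'$ because $P$ and $\lambda'$ commute with multiplication by functions of $h$), set $\lambda''=\lambda'$ viewed at order $k$; since the leading parts of $h^{-k}(P-\lambda)u$ and $h^{-k}(P-\lambda')u'$ vanish, the uniqueness clause in the Inductive step lemma forces $\lambda-\lambda'=O(h^{k+1})$ and $u-a_{(k)}(h)u'-c\,h^k u_0\in h^{k+1}\CinfD(M)$ for some $c\in\R$; then $a_{(k+1)}(h):=a_{(k)}(h)+c\,h^k$ works. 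Finally I would define $a$ from the $a_{(k)}$ by asymptotic summation, giving uniqueness in Taylor series up to $u\mapsto a(h)u$, $a(0)=1$.

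\textbf{Main obstacle.} The only genuinely substantive points are the ones already packaged in the lemmas: the triangular structure \eqref{eqn:adiab model triangular} and solvability of the model operator (Lemma \ref{lem:adiab analysis}), which underlie the Inductive step lemma, and the correct definition of $\calR(M)$ making part a) of the leading part and model operator lemma true. Given those, the proof is a bookkeeping argument identical in shape to the regular-perturbation case; the subtlety one must be careful about is that $\calR(M)$ consists of functions that are $O(h^{-2})$ (not $O(1)$), so in the passage to the limit and in the uniqueness induction one must track powers of $h$ with the correct shift, and use that multiplication by smooth functions of $h$ commutes with $P$ and with subtracting $\lambda$. No new analytic input beyond Lemmas \ref{lem:orth decomp} and \ref{lem:adiab analysis} and the Borel lemma is needed.
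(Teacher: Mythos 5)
Your proposal is correct and follows essentially the same route as the paper, which proves Theorem \ref{thm:adiab quasimodes} exactly as Theorem \ref{thm:regular quasimodes}: iterate the initial and inductive step lemmas, asymptotically sum via the Borel lemma, and run the same induction (with the $a_{(k)}(h)$ correction) for uniqueness. Your extra care about the $h^{-2}$ shift in $\calR(M)$ and the commutation of $P-\lambda$ with multiplication by functions of $h$ is sound and consistent with what the paper leaves implicit.
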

\begin{proof}
This follows from the initial and inductive step as in the proof of Theorem \ref{thm:regular quasimodes}.
\end{proof}

\begin{remark}[Quasimodes vs. modes]
\label{rem:first vertical mode}
This construction works for any simple eigenvalues $\lambda_{-2},\lambda_0$ of $P_F,P_B$ respectively. However, when we ask whether a quasimode $(\lambda,u)$ is close (for small $h$) to an actual eigenvalue/eigenfunction pair we need to be careful:  while $\lambda$ will still be close to a true eigenvalue, $u$ may not be close to an eigenfunction unless $\lambda_{-2}$ is  the {\em smallest} eigenvalue of $P_F$ (\lq first vertical mode\rq). This is in contrast to the case of a regular perturbation where this problem does not arise.

The reason is that closeness of $u$ to an eigenfunction can only be proved (and in general is only true) if we have some a priori knowledge of a spectral gap, i.e. separation of eigenvalues. Such a separation is guaranteed for small $h$ only for the smallest $\lambda_{-2}$. For example, in the case of intervals $B=F=[0,\pi]$ we have eigenvalues $\lambda_{l,m}=h^{-2}l^2+m^2$, $k,l\in\N$. Then 
for each $m$ there are $h_i\to0$ and $m'_i\in\N$ so that $\lambda_{2,m}=\lambda_{1,m'_i}$ for each $i$. Then besides $u_{2,m}$ also $au_{2,m}+bu_{1,m'}$, $a,b\in\R$ are eigenfunctions for these eigenvalues, and in fact under small perturbations (i.e. if $P_1\neq0$) only the latter type may \lq survive\rq.

If one fixes $k$ and considers the $k$th eigenvalue $\lambda_k(h)$ of $\Omega_h$ then, for sufficiently small $h$, it will automatically correspond to the first vertical mode. This is clear for the rectangle but 
follows in general from the arguments that show that such a quasimode is close to an eigenfunction.
\end{remark}

\begin{remark}[Why fibres?]
\label{rem:adiab const fibres}
Why is it natural
to think of the subsets $F_x:=\{x\}\times F$ of $A=B\times F$ as \lq fibres\rq\ (and not the sets $B\times \{Y\}$, for example)? The reason is that these sets are inherently distinguished by the operator $P$: if $u$ is a smooth function on $M=A\times\R_+$ then $Pu$ is generally of order $h^{-2}$. But it is bounded as $h\to0$ if and only if $u$ and $\partial_h u$ are constant on each set $F_x$. 
Put invariantly, $P$ determines the fibres $F_x$ to second order at the boundary $h=0$.

In the geometric setup of the problem, which is sketched in Figure \ref{fig:adiab limit b-up}, the fibres arise naturally as fibres (i.e. preimages of points) of the blow-down map $\beta$ restricted to the front face.
\end{remark}
\begin{exercise}
Find a formula for the first non-trivial perturbation term $\lambda_1$. 
\end{exercise}

%%%%%%%%%%%
\subsection{Examples}\label{subsec:adiab examples}
We already looked at the trivial example of a rectangle. A non-trivial example will be given in Section \ref{sec:adiab variable}. 
Tubes  around curves provide another interesting example: Let $\gamma:I\to\R^2$ be a smooth simple curve in the plane parametrized by arc length, where $I\subset\R$ is a compact interval.
 The tube of width $h>0$ around $\gamma$ is 
$$ T_h= \{\gamma(x) + hYn(x)\,:\,x\in I,Y\in[-\tfrac12,\tfrac12]\}$$
where $n(x)$ is a unit normal at $\gamma(x)$.
For $h$ small the given parametrization is a diffeomorphism, and in coordinates $x,Y$ the euclidean metric on $T_h$ is $a^2dx^2+h^2dY^2$
where $a(x,Y) = 1-hY\kappa(x)$ with $\kappa$ the curvature of $\gamma$, so the Laplacian is
$\Delta=a^{-1}\partial_x a^{-1}\partial_x + h^{-2} a^{-1} \partial_Y a \partial_Y$, which is selfadjoint for the measure $adxdY$. This does not have the desired form. However, the operator $P=-a^{1/2}\Delta a^{-1/2}$ is
 unitarily equivalent to $-\Delta$ and self-adjoint in $L^2(I\times[-\tfrac12,\tfrac12],dxdY)$, and short calculation gives
 $$ P = -h^{-2}\partial_Y^2 - \partial_x^2 - \frac14\kappa^2 + O(h).$$
 Theorem \ref{thm:adiab quasimodes} now yields quasimodes where $\lambda_{-2}=\pi^2k^2$ and $\lambda_0$ is a Dirichlet eigenvalue of the operator $- \partial_x^2 - \frac14\kappa^2$ on $I$.
 See \cite{Gri:TTMPGASG} and \cite{FreKre:LNSTCT} for details.
\medskip

In all previous examples (and also in the example of Section \ref{sec:adiab variable}) the operators $P_F$ and $P_0$ commute. Here is a simple example where this is not the case. Take $B=F=[0,1]$, $P_F=-\partial_Y^2$ and $P_0=-\partial_x^2 + b(x,Y)$ for some smooth function $b$. 
Then $P_B = -\partial_x^2 + c(x)$ where 
 $c(x)=\langle b(x,Y)\psi(Y),\psi(Y)\rangle_{F}=\frac12\int_0^1 b(x,Y)\sin^2 \pi Y\,dY$ if $\lambda_{-2}=\pi^2$ is the lowest eigenvalue of $P_F$. Here 
$P_0$
commutes with $P_F$ iff $b=b(x)$, and then $c=b$. 

%%%%%%%%%%%%%%%%%%%%%
\subsection{Generalizations}\label{subsec:adiab-generaliz}
\subsubsection*{Fibre bundles}
The product $B\times F$ can be replaced by a fibre bundle $\pi:A\to B$ with base $B$ and fibres $F_x=\pi^{-1}(x)$. 
We assume $P$ is given as in \eqref{eqn:P adiabatic}, where $P_F$ differentiates only in the fibre directions. That is, for each $x\in B$ there is an operator $P_{F_x}$ on the fibre $F_x$. 
We assume that $P_{F_x}$ has {\em the same} eigenvalue $\lambda_{-2}$ for each $x\in B$, with one-dimensional eigenspace $K_x$.
Under this assumption there are no essential changes, mostly notational ones:
 
The $K_x$ form a line bundle $K$ over $B$. Sections of $K\to B$ may be identified with functions on $A$ which restricted to $F_x$ are in $K_x$, for each $x$, so 
$$ \Cinf(B,K) \subset \Cinf(A).$$
The line bundle $K\to B$ may not have a global non-vanishing section (replacing $\psi$). We deal with this by replacing functions on $B$ by sections of $K\to B$. The projections $\Cinf(F_x)\to K_x$ fit together to a map
$$ \Pi:\Cinf(M) \to \Cinf(B\times \R_+,K) $$
and then 
$$P_B = \Pi P_0 i:\CinfD(B,K)\to\Cinf(B,K)$$
where $i:\CinfD(B,K)\to\CinfD(A)$ is the inclusion. 
We replace $\Cinf(B)$ by $\Cinf(B,K)$ and $\phi\otimes\psi$ by $u_0\in \CinfD(B,K)\subset\CinfD(A)$, an eigensection of $P_B$, everywhere. Then the construction of formal eigenvalues and eigenfunctions works as before.

The adiabatic limit for fibre bundles has been considered frequently in the global analysis literature, see for example \cite{MazMel:ALHCLSSF}, \cite{DaiMel:ALHKAT}.
\subsubsection*{Multiplicities}
The construction can be generalized to the case where $\lambda_{-2}$ and $\lambda_0$ are multiple eigenvalues. In the case of fibre bundles it is important that the multiplicity of $\lambda_{-2}$ is independent of the base point, otherwise new analytic phenomena arise. 
\subsubsection*{Noncompact base}
\label{subsubsec:adiab noncompact}
The base (or fibre) need not be compact as long as $P_B$ (resp. $P_F$) has compact resolvent (hence discrete spectrum) and the higher order (in $h$) terms of $P$ behave well at infinity. 

For example, the case $B=\R$ with $P_B=-\partial_x^2+V(x)$ where $V(x)\to\infty$ as $|x|\to \infty$ arises in Section \ref{sec:adiab variable}.
%%%%%%%%%%%
%\subsection{maybe: more invariant point of view}
%All objects relating to the projection $M\to B\times \R_+$ with fibre $F$ should really be considered as defined at the boundary $h=0$ to second order only, but for simplicity\footnote{or rather to avoid having to think about what this means precisely -- not that it would be very difficult, see the appendix of \cite{GriHun:POCGQLSSI} for example} we assume this projection is defined globally in $h$.
%
%rescaled bundle

%%%%%%%%%%%
\section{Adiabatic limit with variable fibre eigenvalue}
\label{sec:adiab variable}

% computations for figures for adiab. limit with variable thickness
\def\yoffsetoben{2}
\def\yoffsetunten{4}

% set overall height variable
\def\h{.3}

% compute radii and angles
\pgfmathsetmacro{\radiusoben}{sqrt((\h/2+\yoffsetoben)^2+1)}
\pgfmathsetmacro{\radiusunten}{sqrt((\h/2+\yoffsetunten)^2+1)}

\pgfmathsetmacro{\startangletop}{acos(1/(\radiusoben))}
\pgfmathsetmacro{\endangletop}{acos(-1/(\radiusoben))}

\pgfmathsetmacro{\startanglebot}{acos(1/(\radiusunten))}
\pgfmathsetmacro{\endanglebot}{acos(-1/(\radiusunten))}

%%% single domain adiab limit of var thickness
\begin{figure}
\centering
\begin{tikzpicture}

% single domain
\begin{scope}[xscale=1.5,yscale=.8]

\draw[dashed,->] (-1.2,0,0) -- (1.2,0,0) node[right]{$x$};
\draw[dashed,->] (0,-1.5,0) -- (0,1.5,0) node[left]{$y$};

\draw (-1,\h/2) -- (-1,-\h/2) arc [start angle=\startanglebot+180, end angle=\endanglebot+180, radius=\radiusunten];;
\draw (1,-\h/2) -- (1, \h/2) arc [start angle=\startangletop, end angle=\endangletop, radius=\radiusoben];

\end{scope} 
\end{tikzpicture}

\caption{Thin domain $\Omega_h$ of variable thickness}
\label{fig:adiab var domain}
\end{figure}
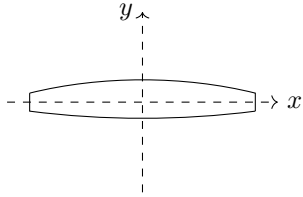
In this section we consider thin domains of variable thickness, see Figure \ref{fig:adiab var domain}. We will see that the nonconstancy of the thickness makes a big difference to the behavior of eigenfunctions and hence to the construction of quasimodes. However, using a suitable rescaling, reflected in the second blow-up in Figure \ref{fig:adiab var resolution}, we can reduce the problem to the case considered in the previous section. 

We consider a family of domains $\Omega_h\subset\R^2$ defined as follows.
Let $I\subset\R$ be a bounded open interval and $a_-,a_+:I\to\R$ be functions satisfying $a_-(x)<a_+(x)$ for all $x\in I$. Let
\begin{equation}
\label{eqn:adiab var Omega_h def} 
 \Omega_h = \{(x,y)\in\R^2:\, ha_-(x) < y < ha_+(x),\ x\in I\}
\end{equation}
for $h>0$.
We assume that the height function $a:=a_+-a_-$ has a unique, non-degenerate maximum, which we may assume to be at $0\in I$. More precisely
\begin{equation}
 \label{eqn:var adiab limit assumption}
\begin{gathered}
\text{\parbox{11cm}{
 for each $\eps>0$ there is a $\delta>0$ so that $|x|>\eps\Rightarrow a(x)<a(0)-\delta$, and\\[1mm]
$a$ is smooth near $0$ and  $a''(0) < 0$
}} 
\end{gathered}
\end{equation}
The conditions in the second line sharpen the first condition near $0$. 
See Section \ref{subsec:generalizations adiab limit var} for generalizations.

As before, we want to construct quasimodes $(\lambda_h,u_h)$ for the Laplacian on $\Omega_h$ with Dirichlet boundary conditions, as $h\to0$. Our construction will apply to \lq low\rq\ eigenvalues, see Remark \ref{rem:adiab var low ev} below.

As in the previous section we rescale the $y$-variable to lie in a fixed interval, independent of $x$:
Let
\begin{equation}
\label{eqn:adiab var Y} 
 Y = \frac{y-ha_-(x)}{ha(x)} \in (0,1).
\end{equation}
The change of variables $(x,y)\to (x,Y)$ transforms the vector fields
$\partial_x$, $\partial_y$ to%
\footnote{This is common but terrible notation. For calculational purposes it helps to write $(x',Y)$ for the new coordinates, related to $(x,y)$ via $x'=x$ and \eqref{eqn:adiab var Y}. Then $\frac{\partial}{\partial x}=\frac{\partial x'}{\partial x}\frac{\partial}{\partial x'} + \frac{\partial Y}{\partial x}\frac{\partial}{\partial Y} = \frac{\partial}{\partial x'} + b(x',Y)\frac{\partial}{\partial Y}$ and similarly for $\frac{\partial}{\partial y}$.
In the end replace $x'$ by $x$ to simplify notation.

Put differently, $\rightsquigarrow$ means push-forward under the map $F(x,y)=(x,Y(x,y))$.}
\begin{align*}
 \partial_x &\rightsquigarrow \partial_x + b(x,Y)\partial_Y,\quad
 b= \frac{\partial Y}{\partial x} = -\frac{a_-'}a - Y \frac{a'}a \\ 
 \partial_y &\rightsquigarrow \frac{\partial Y}{\partial y}\partial_Y = h^{-1}a^{-1}\partial_Y
\end{align*}
Therefore
$$ \Delta = h^{-2}a^{-2}\partial_Y^2 + (\partial_x + b\partial_Y)^2$$
This is reminiscent of the adiabatic limit considered in Section \ref{sec:adiab limit const}, but the fibre operator $a^{-2}\partial_Y^2$ has first eigenvalue $\pi^2 a(x)^{-2}$ depending on $x$, so the analysis developed there is not directly applicable.

We deal with this by expanding around $x=0$ and rescaling the $x$-variable.
%%%%%%%%%%%%%%
\subsection{Heuristics: Finding the relevant scale}
The assumption $a''(0)<0$ implies that the Taylor series of $a^{-2}$ around $0$ is
\begin{equation}
\label{eqn:taylor a-2} 
 a^{-2}(x) \sim c_0 + c_2x^2+\dots, \quad c_0>0,\ c_2>0
\end{equation}
so 
\begin{equation}\label{eqn:Delta adiab limit var}
 \Delta =  c_0h^{-2}\partial_Y^2 + c_2 h^{-2} x^2 \partial_Y^2 + \dots + (\partial_x + b\partial_Y)^2
\end{equation}
near $x=0$. 

Which behavior do we expect for the eigenfunctions with small eigenvalues, say the first? 
Such an eigenfunction $u$ will minimize the Rayleigh-quotient
$$ R(u) = \frac{\langle -\Delta u,u\rangle}{\|u\|^2}$$
among functions satisfying Dirichlet boundary conditions.
 Let us see how the different terms in  \eqref{eqn:Delta adiab limit var} contribute to $R(u)$:
\begin{itemize}
 \item
The $h^{-2}\partial_Y^2$ term contributes at least $c_0\pi^2 h^{-2}$, since $\langle -\partial_Y^2 \psi,\psi\rangle_{[0,1]} \geq \pi^2\|\psi\|^2_{[0,1]}$ for any    $\psi:[0,1]\to\R$ having boundary values zero.\footnote{This is just the fact that the smallest eigenvalue of the Dirichlet Laplacian on $[0,1]$ is $\pi^2$.}
\item 
The $h^{-2}x^2\partial_Y^2$ term contributes a positive summand which is $O(h^{-2})$, but can be much smaller if the eigenfunction is large only for $x$ near zero. Specifically, if $u$ concentrates near $x=0$ on a scale of $L$, i.e.
$$ u(x,Y) \approx \phi(\frac xL) \psi(Y) $$
for a function $\phi$ on $\R$ that is rapidly decaying at infinity
then this term will be of order $$h^{-2}L^2$$
since $x^2 \phi(\frac xL) = L^2\tilde\phi(\frac xL)$ for $\tilde\phi(\xi)=\xi^2\phi(\xi)$ and $\tilde\phi$ is bounded\footnote{It is useful to think of this as follows: \lq $\phi(\frac xL)$ contributes only for $x\approx L$, and then $x^2\approx L^2$'.}. If  $L\to0$ for $h\to0$ then this is much smaller than $h^{-2}$.
\item
On the other hand, the $\partial_x^2$ term will be of order $L^{-2}$ if $u$ concentrates on a scale of $L$ near $x=0$.
\item
The other terms are smaller.
\end{itemize}
We can now determine the scale $L$ (as function of $h$) for which the sum of the $h^{-2}x^2\partial_Y^2$ and $\partial_x^2$ terms is smallest: For fixed $h$ the sum $h^{-2}L^2+L^{-2}$ is smallest when $h^{-2}L^2=L^{-2}$ (since the product of $h^{-2}L^2$ and $L^{-2}$ is constant), i.e.
$$ L = h^{1/2}.$$
The expectation of concentration justifies using the Taylor expansions around $x=0$. 

The heuristic considerations of this section are justified by the construction of quasimodes in the next section.

\subsection{Solution by reduction to the adiabatic limit with constant fibre}
The scaling considerations suggest to introduce the variable 
\begin{equation}
 \label{eqn:rescale x}
\xi=\frac x{h^{1/2}}
\end{equation}
 in \eqref{eqn:Delta adiab limit var}. 
Expanding also $b(x,Y)$ in Taylor series around $x=0$ and substituting $x=\xi h^{1/2}$ we obtain
\begin{equation}
 \label{eqn:Delta adiab limit rescaled}
 \Delta \sim h^{-2}c_0\partial_Y^2 + h^{-1}\left(\partial_\xi^2+ \xi^2 c_2\partial_Y^2\right) + \sum_{j=-1}^\infty h^{j/2} P_j
\end{equation}
where $P_j$ are second order differential operators in $\xi,Y$ 
whose coefficients are polynomial in $\xi$ (of degree at most $j+4$) and linear in $Y$.

The right hand side of \eqref{eqn:Delta adiab limit rescaled} is a formal series of
differential operators which are defined for $Y\in (0,1)$ and $\xi\in\R$. Now we may apply the constructions of Section \ref{sec:adiab limit const}, with $F=[0,1]$ and $B=\R$.
More precisely, $-\Delta=h^{-1}P$ where, with $t=h^{1/2}$,
$$P \sim t^{{-2}}P_F + P_0+tP_1+\dots$$
with $P_F=-c_0\partial_Y^2$ and $P_0=-\partial_\xi^2 - \xi^2 c_2\partial_Y^2$. These operators act on bounded functions satisfying Dirichlet boundary conditions at $Y=0$ and $Y=1$.
Using  the first eigenvalue, $\lambda_{-2}=c_0\pi^2$, of $P_F$ we get the horizontal operator (see \eqref{eqn:PB def})
$$ P_B = -\partial_\xi^2 + \omega^2 \xi^2,\quad \omega=\sqrt{c_0c_2}\pi.$$
This is the well-known quantum harmonic oscillator, with eigenvalues $\mu_m=\omega(2m+1)$, $m=0,1,2,\dots$, and eigenfunctions
\begin{equation}
 \label{eqn:adiab var harm osc}
\psi_m(\xi) = H_m(\sqrt\omega \xi) e^{-\frac12 \omega\xi^2} 
\end{equation}
where $H_m$ is the $m$th Hermite polynomial.

The exponential decay of $\psi_m$ as $|\xi|\to\infty$ justifies a posteriori the scaling limit considerations above. It means that quasimodes concentrate on a strip around $x=0$ whose width is of order $h^{1/2}$ . 

By Theorem \ref{thm:adiab quasimodes} in Section \ref{sec:adiab limit const} the operator $P$ has quasimodes
$\sin \pi Y \psi_m(\xi) + O(t)$. To get quasimodes for $-\Delta$ on $\Omega_h$ we simply substitute the coordinates 
$Y,\xi$ as in \eqref{eqn:adiab var Y}, \eqref{eqn:rescale x}. In addition, we should introduce a cutoff near the ends of the interval $I$ so that Dirichlet boundary conditions are satisfied there.

We state the result in terms of resolutions.
Introducing the singular coordinates $Y$ and $\xi$ corresponds to 
a resolution of the total space $\Omega=\bigcup_{h>0}\Omega_h\times\{h\}$ by
two blow-ups as shown in Figure \ref{fig:adiab var resolution}: 
$$ \Omegabar \longleftarrow [\Omegabar,\{y=h=0\}] =: M_0 \longleftarrow
 [M_0,\{x=h=0\}]_q =: M\,.$$

%% total space and resolution for variable thickness
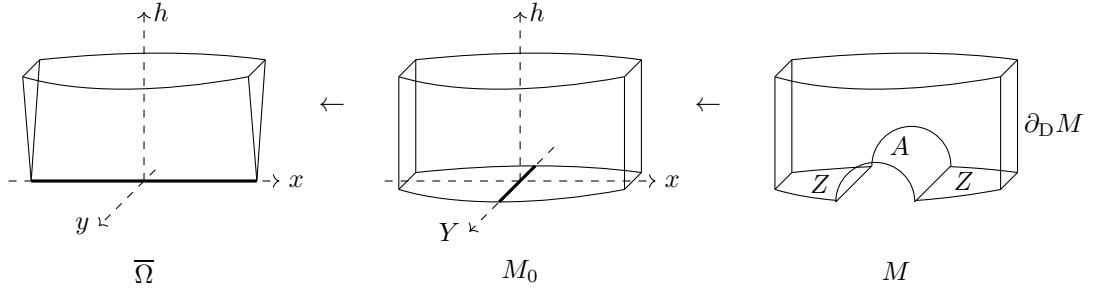
\begin{figure}
 
\begin{tikzpicture}

% total space
\begin{scope}[scale=1.5, x={(1cm,0)},y={(-.5cm,-.5cm)}, z={(0,1cm)}]

 \draw[dashed,->] (-1.2,0,0) -- (1.2,0,0) node[right]{$x$};
 \draw[dashed,->] (0,-.2,0) -- (0,.8,0) node[left]{$y$};
 \draw[dashed,->] (0,0,0) -- (0,0,1.5) node[right]{$h$};

\draw[very thick] (-1,0,0) -- (1,0,0); 

\def\heights{1}

 \draw (-1,\h/2, \heights) -- (-1,-\h/2, \heights) arc [start angle=\startanglebot+180, end angle=\endanglebot+180, radius=\radiusunten];;
\draw (1,-\h/2, \heights) -- (1, \h/2, \heights) arc [start angle=\startangletop, end angle=\endangletop, radius=\radiusoben]; 

\draw (-1,0,0) -- (-1,\h/2,\heights);
\draw (-1,0,0) -- (-1,-\h/2,\heights);
\draw (1,0,0) -- (1,\h/2,\heights);
\draw (1,0,0) -- (1,-\h/2,\heights);

\node at (0,0,-.8){$\Omegabar$}; 
 \end{scope}
%%%%%%%%%%
   \node at (2.5,1) {$\leftarrow$};

% total space after first blow-up
 \begin{scope}[xshift=5cm, scale=1.5,  x={(1cm,0)},y={(-.5cm,-.5cm)}, z={(0,1cm)}]
 
 \draw[dashed,->] (-1.2,0,0) -- (1.2,0,0) node[right]{$x$};
 \draw[dashed,->] (0,-.6,0) -- (0,.9,0) node[left]{$Y$};
 \draw[dashed,->] (0,0,0) -- (0,0,1.5) node[right]{$h$}; 
 
 \draw (-1,\h/2, 0) -- (-1,-\h/2, 0) arc [start angle=\startanglebot+180, end angle=\endanglebot+180, radius=\radiusunten];;
\draw (1,-\h/2, 0) -- (1, \h/2, 0) arc [start angle=\startangletop, end angle=\endangletop, radius=\radiusoben]; 

\def\height{1}

 \draw (-1,\h/2, \height) -- (-1,-\h/2, \height) arc [start angle=\startanglebot+180, end angle=\endanglebot+180, radius=\radiusunten];;
\draw (1,-\h/2, \height) -- (1, \h/2, \height) arc [start angle=\startangletop, end angle=\endangletop, radius=\radiusoben]; 

\draw (-1,\h/2, 0) -- (-1,\h/2, \height);
\draw (-1,-\h/2, 0) -- (-1,-\h/2, \height);
\draw (1,-\h/2, 0) -- (1, -\h/2, \height);
\draw (1,\h/2, 0) -- (1, \h/2, \height);

\draw[very thick] (0,-\yoffsetoben+\radiusoben,0) -- (0,\yoffsetunten-\radiusunten,0);

\node at (0,0,-.8){$M_0$}; 

 \end{scope}
 %%%%%%%%%%
 
   \node at (7.5,1) {$\leftarrow$};

 % total space after second blow-up
 \begin{scope}[xshift=10cm, scale=1.5,  x={(1cm,0)},y={(-.5cm,-.5cm)}, z={(0,1cm)}]
 
% \draw[dashed,->] (-1.2,0,0) -- (1.2,0,0) node[right]{$x$};
% \draw[dashed,->] (0,-.6,0) -- (0,.9,0) node[left]{$Y$};
% \draw[dashed,->] (0,0,0) -- (0,0,1.5) node[right]{$h$}; 

\def\height{1}
\def\radiuscylinder{0.35}

\pgfmathsetmacro{\angleuntenlinks}{acos((\radiuscylinder/\radiusunten)}
\pgfmathsetmacro{\angleuntenrechts}{acos((-\radiuscylinder/\radiusunten)}

\pgfmathsetmacro{\angleobenlinks}{acos((\radiuscylinder/\radiusoben)}
\pgfmathsetmacro{\angleobenrechts}{acos((-\radiuscylinder/\radiusoben)}

 \draw (-1,\h/2, 0) -- (-1,-\h/2, 0) arc [start angle=\startanglebot+180, end angle=\angleuntenlinks+180, radius=\radiusunten];
 \draw (1,-\h/2, 0) arc [start angle=\endanglebot+180, end angle=\angleuntenrechts+180, radius=\radiusunten];
 
 \draw (1,-\h/2, 0) -- (1,\h/2, 0) arc [start angle=\startangletop, end angle=\angleobenlinks, radius=\radiusoben];
 \draw (-1,\h/2, 0) arc [start angle=\endangletop, end angle=\angleobenrechts, radius=\radiusoben];
%\draw (1,-\h/2, 0) -- (1, \h/2, 0) arc [start angle=\startangletop, end angle=\angleobenlinks, radius=\radiusoben]; 

\pgfmathsetmacro{\endYPLUS}{-\yoffsetoben+\radiusoben}
\pgfmathsetmacro{\endYMINUS}{\yoffsetunten-\radiusunten}

 \draw (-1,\h/2, \height) -- (-1,-\h/2, \height) arc [start angle=\startanglebot+180, end angle=\endanglebot+180, radius=\radiusunten];;
\draw (1,-\h/2, \height) -- (1, \h/2, \height) arc [start angle=\startangletop, end angle=\endangletop, radius=\radiusoben]; 

\draw (-1,\h/2, 0) -- (-1,\h/2, \height);
\draw (-1,-\h/2, 0) -- (-1,-\h/2, \height);
\draw (1,-\h/2, 0) -- (1, -\h/2, \height);
\draw (1,\h/2, 0) -- (1, \h/2, \height);

\draw (\radiuscylinder,\endYPLUS, 0) -- (\radiuscylinder,\endYMINUS, 0);
\draw (-\radiuscylinder,\endYPLUS, 0) -- (-\radiuscylinder,\endYMINUS, 0);
%\draw (0, \endYPLUS, \radiuscylinder) -- (0, \endYMINUS, \radiuscylinder);
%\draw [canvas is xz plane at y=0] (\radiuscylinder, 0, 0) arc [start angle = 0, end angle=180, radius=\radiuscylinder];

\draw [canvas is xz plane at y=\endYMINUS] (\radiuscylinder, 0, 0) arc [start angle = 0, end angle=180, radius=\radiuscylinder];
\draw [canvas is xz plane at y=\endYPLUS] (\radiuscylinder, 0, 0) arc [start angle = 0, end angle=180, radius=\radiuscylinder];

% labels for faces
\node at (0.07,.08,\radiuscylinder) {$A$};
\node at (.6,0,0) {$Z$};
\node at (-.65,.05,0) {$Z$};
\node at (1.4,0,\height/2) {$\dDir M$};
% caption for space
\node at (0,0,-.8){$M$}; 

 \end{scope}
 
\end{tikzpicture}
 
 \caption{Total space for adiabatic limit with variable thickness and its resolution}
 \label{fig:adiab var resolution}
\end{figure}
%%%%

The  blow-up of $\Omegabar$ in the $x$ axis corresponds to introducing $Y$, as in Section \ref{sec:adiab limit const}, and results in the space $M_0$.
The quasihomogeneous blow-up (see Subsection \ref{subsubsec:blow-up qh}) of $M_0$ in the $Y$-axis corresponds to introducing $\xi=\frac x{\sqrt h}$. Compare Figure \ref{fig:proj coord qh} (with $y$ replaced by $h$):  $\xi$ and $\sqrt h=t$, the variables used for the operator $P$, are precisely the \lq top\rq\ projective coordinates defined away from the right face. Denote the total blow-down map by
$$ \beta: M\to \Omegabar\,.$$
Each of the two blow-ups creates a boundary hypersurface of $M$ at $h=0$: the first blow-up creates $Z$, the  second blow-up creates $A$ (for \lq adiabatic'). In addition, $M$ has the Dirichlet boundary $\partial_DM$ which is the lift of $\bigcup_{h>0}(\partial\Omega_h)\times\{h\}\subset\Omegabar$.

The essence of these blow-ups is that we can construct quasimodes
as smooth functions on $M$.\footnote{Of course this means that we construct quasimodes on $\Omega$ so that their pull-backs to $M$ extend smoothly to the boundary of $M$.} Their expansion at $A$ is the one obtained using the analysis of $P$. Since the quasimodes of $P$ are exponentially decaying as $\xi\to\pm\infty$, we may just take the zero expansion at $Z$ (hence the letter $Z$).

Summarizing, we obtain the following theorem.
We denote
$$\Cinf_{1/2}(\R_+) =\{\mu:\R_+\to\R:
\mu(h) = \tilde\mu(\sqrt h) \text{ for some }\tilde\mu\in\Cinf(\R_+)
\}\,.$$
\begin{theorem}[quasimodes for adiabatic limit with variable fibre eigenvalue]
\label{thm:adiabatic variable}
Consider the family of domains $\Omega_h$ defined in \eqref{eqn:adiab var Omega_h def} and  satisfying \eqref{eqn:var adiab limit assumption}.
Define $M$ as above.
Then for each $m\in\N$ there are $\lambda_m\in h^{-2}\Cinf_{1/2}(\R_+)$, $u_m\in\CinfD(M)$
satisfying 
$$ (-\Delta-\lambda_m)u_m \in h^\infty\Cinf (M)$$
and
\begin{align*}
\displaystyle \lambda_m &\sim c_0\pi^2 h^{-2} + \sqrt{c_0c_2}\pi(2m+1) h^{-1} + O(h^{-1/2}) \\
u_{m} &= \sin \pi Y \psi_m(\xi) \text{ at }A,\quad u_m = 0 \text{ at }Z
\end{align*}
 where $c_0=a(0)^{-2}$, $c_2 = -a''(0) a(0)^{-1}$.
 In addition, $u_m$ vanishes to infinite order at $Z$.
%Furthermore, $\lambda$ and $u$ are unique in Taylor series at $h=0$, 
%up to replacing $u$ by $a(h)u$ where $a$ is smooth and $a(0)=1$.
\end{theorem}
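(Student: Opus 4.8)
The plan is to reduce to Theorem \ref{thm:adiab quasimodes} via the two rescalings \eqref{eqn:adiab var Y} and \eqref{eqn:rescale x}, to build the quasimode near the face $A$ from that theorem, and then to extend it by zero — after a cutoff in $x$ — to all of $M$, exploiting that it is exponentially localized near $x=0$. Concretely, I would first perform the change of variables $(x,y)\mapsto(x,Y)$ of \eqref{eqn:adiab var Y}, turning $\Delta$ into $h^{-2}a^{-2}\partial_Y^2+(\partial_x+b\partial_Y)^2$ on the $x$-independent cylinder $I\times(0,1)$; then substitute $\xi=x/h^{1/2}$, $t=h^{1/2}$ and Taylor-expand the coefficients around $x=0$. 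Since the non-degenerate maximum of $a$ at $0$ gives \eqref{eqn:taylor a-2} with $c_0,c_2>0$, this yields $-\Delta=h^{-1}P$ with $P\sim t^{-2}P_F+P_0+tP_1+\cdots$ as in \eqref{eqn:Delta adiab limit rescaled}, where $P_F=-c_0\partial_Y^2$ on $F=[0,1]$ with Dirichlet conditions, $P_0=-\partial_\xi^2-c_2\xi^2\partial_Y^2$, and each $P_j$ is a second-order operator with coefficients polynomial in $\xi$ and affine in $Y$. We are thus placed in the adiabatic setting of Section \ref{sec:adiab limit const} with fibre $F=[0,1]$ and base $B=\R$.

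The base is noncompact, but by Subsection \ref{subsubsec:adiab noncompact} this is admissible: with the lowest fibre eigenvalue $\lambda_{-2}=c_0\pi^2$ and normalized vertical mode $\psi=\sqrt2\,\sin\pi Y$, formula \eqref{eqn:PB def} gives the horizontal operator $P_B=-\partial_\xi^2+\omega^2\xi^2$, $\omega=\sqrt{c_0c_2}\,\pi$, which is the harmonic oscillator — compact resolvent, simple discrete spectrum $\mu_m=\omega(2m+1)$, Schwartz eigenfunctions $\psi_m$ as in \eqref{eqn:adiab var harm osc}. The point to check in order to run the iteration of Section \ref{sec:adiab limit const} over $B=\R$ is that it remains within the Schwartz class in $\xi$: at each step the right-hand side is $P_j$ applied to finite sums of terms $p(\xi)\,\psi_{m'}(\xi)\,q(Y)$ with $p$ polynomial, hence Schwartz in $\xi$, and the generalized inverse of $P_B-\mu_m$ from Lemma \ref{lem:orth decomp} maps Schwartz functions orthogonal to $\Ker(P_B-\mu_m)$ to Schwartz functions; so Lemma \ref{lem:adiab analysis} and the inductive step lemma apply verbatim. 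Theorem \ref{thm:adiab quasimodes}, used in the variable $t$, then produces $\mu\in t^{-2}\Cinf(\R_+)$ with $\mu=t^{-2}c_0\pi^2+\omega(2m+1)+O(t)$ and a quasimode $w$, smooth in $(\xi,Y,t)$ and Schwartz in $\xi$ locally uniformly in $t$, with $(P-\mu)w$ vanishing to infinite order as $t\to0$ and $w|_{t=0}=\sin\pi Y\,\psi_m(\xi)$ up to an overall constant (absorbed as in the last clause of that theorem).

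It remains to transfer back. Since $-\Delta=h^{-1}P$, the eigenvalue is $\lambda_m:=h^{-1}\mu$, which lies in $h^{-2}\Cinf_{1/2}(\R_+)$ because $h^{-1}t^{-2}=h^{-2}$ with $t=\sqrt h$, and expanding yields the asserted $\lambda_m\sim c_0\pi^2 h^{-2}+\sqrt{c_0c_2}\,\pi(2m+1)h^{-1}+O(h^{-1/2})$. By the construction of $M$ in Figure \ref{fig:adiab var resolution} and Subsection \ref{subsubsec:blow-up qh}, $\xi$ and $t=\sqrt h$ are the \lq top\rq\ projective coordinates on $M=[M_0,\{x=h=0\}]_q$, with $t=0$ the face $A$ and $|\xi|\to\infty$ the approach to the face $Z$; hence $w$, being smooth in $(\xi,Y,t)$ and rapidly decreasing as $|\xi|\to\infty$ (because $\psi_m$ and all the iterates are Schwartz in $\xi$), extends to a smooth function on $M$ that vanishes to infinite order at $Z$, has leading part $\sin\pi Y\,\psi_m(\xi)$ at $A$, and already satisfies the Dirichlet condition on the arcs $y=ha_\pm(x)$ (the faces $Y\in\{0,1\}$).

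Finally, multiplying by a cutoff $\chi(x)\in\Cinf_c(I)$ with $\chi\equiv1$ near $0$ repairs the Dirichlet condition on the two end segments of $\partial\Omega_h$, and — since $|\xi|=|x|\,h^{-1/2}$, so $w$ and all its derivatives are $O(h^\infty)$ once $x$ stays away from $0$ — changes $(-\Delta-\lambda_m)u_m$ only by an $O(h^\infty)$ term; the same localization shows that having replaced $a^{-2}$, $b$ by their Taylor jets at $0$ in forming $P$ (valid only near $x=0$) is an $O(h^\infty)$ modification as well. This gives $u_m\in\CinfD(M)$ with $(-\Delta-\lambda_m)u_m\in h^\infty\Cinf(M)$, $u_m=\sin\pi Y\,\psi_m(\xi)$ at $A$, $u_m=0$ at $Z$, indeed $u_m$ vanishing to infinite order at $Z$, as claimed. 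The main obstacle is the Schwartz-class verification above: one must ensure the noncompactness of $B=\R$ does not break the iterative construction of Section \ref{sec:adiab limit const}, which ultimately rests on the harmonic oscillator having discrete spectrum and on Schwartz decay being propagated by its resolvent and by the perturbations $P_j$.
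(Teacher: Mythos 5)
Your proposal is correct and follows essentially the same route as the paper: the rescalings $Y=(y-ha_-(x))/(ha(x))$, $\xi=x/h^{1/2}$, $t=h^{1/2}$ reduce the problem to the adiabatic machinery of Section \ref{sec:adiab limit const} with the harmonic oscillator as horizontal operator, and the rapid (Gaussian) decay in $\xi$ of all iterates is exactly the point the paper uses to place the quasimode on $M$ with zero expansion at $Z$. The only difference is in the final assembly and is minor: the paper builds $u_m$ directly by the Borel lemma on $M$, matching the expansion at $A$ with the zero expansion at $Z$ (so the equation holds to infinite order without any cutoff or comparison with the Taylor-jet operator), whereas you first produce a genuine quasimode in the $(\xi,Y,t)$ variables and then cut off in $x$, using the $h^{1/2}$-scale concentration (together with Taylor's theorem for the coefficients) to absorb the cutoff and jet-replacement errors into $O(h^\infty)$.
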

In the original coordinates on $\Omega_h$ the conditions on $u_m$ translate to
\begin{equation}
 \label{eqn:adiab variable explicit asymp}
u_m(h,x,y) = \sin\pi \frac {y-ha_-(x)}{ha(x)}\,\, \psi_m (\frac x{h^{1/2}})\, +\, O (h^{1/2} \left(1+\tfrac {x^2}h\right)^{-N})
\end{equation}
for all $N$. There is also a uniqueness statement similar to the one in Theorem \ref{thm:adiab quasimodes}.
\begin{proof}
 Choose a function $u_m$ on $M$ satisfying the following conditions: The expansion of $u_m$ at the face $A$ is given by the expansion for the quasimodes of $P$ discussed above. The expansion of $u_m$ at the face $Z$ is identically zero; and $u_m$ is zero at the Dirichlet boundary of $M$. Since $\psi_m$ is exponentially decaying and all $P_j$ have coefficients which are polynomial in $\xi$, all terms in the expansion at $A$ are exponentially decaying as $\xi\to\infty$. Since $\xi=\infty$ corresponds to the corner $A\cap Z$, the matching conditions of the Borel Lemma \ref{lem:borel} are satisfied, so $u_m\in\CinfD(M)$ exists having the given expansions.  Since both expansions satisfy the eigenvalue equation to infinite order, so does $u_m$. 
The extra decay factor in the error term of $u_m$ in \eqref{eqn:adiab variable explicit asymp} corresponds to the infinite order vanishing at $Z$, since $\frac {x^2}h$ defines $Z$ near $A\cap Z$, see Figure \ref{fig:proj coord qh}.
\end{proof}
\begin{remark}
\label{rem:adiab var low ev}
The scaling considerations depended on the assumption that $u$ concentrates near $x=0$ as $h\to 0$, and this was justified a posteriori
by Theorem \ref{thm:adiabatic variable}.  On the other hand, it can also be shown a priori  using Agmon estimates 
that eigenfunctions for eigenvalues $\lambda_k(h)$, where $k$ is fixed as $h\to0$, behave in this way (and this can be used to prove closeness of quasimodes to eigenfunctions, see \cite{Ag:BEDESO}, \cite{DauRay:PWCSAL} for example).

Quasimodes can also be constructed for higher vertical modes, i.e. taking $\lambda_{-2}=l^2c_0\pi^2$ for any $l\in\N$. However, the same caveat as in Remark \ref{rem:first vertical mode} applies. 
\end{remark}
\begin{exercise}
Compute the next term in the expansion of $\lambda_m$, i.e. the coefficient of $h^{-1/2}$.
\end{exercise}
%%%%%%%%%%%%%
\subsection{Generalizations}
\label{subsec:generalizations adiab limit var}
\subsubsection*{Degenerate maximum}
A very similar procedure works if $a$ has a finitely degenerate maximum, i.e. if the condition $a''(0)<0$ in \eqref{eqn:var adiab limit assumption} is replaced by
\begin{equation}
\label{eqn:adiab var deg max} 
 a^{(j)}(0)=0\ \text{ for } j < 2p,\quad a^{(2p)}(0)< 0
\end{equation}
for some $p\in\N$.
The order is even by smoothness. The expansion \eqref{eqn:taylor a-2} is replaced by $a^{-2}(x)\sim c_0 + c_{2p} x^{2p} + \dots$ with $c_{2p}>0$, and then the correct scaling is found from the equation $h^{-2}L^{2p} = L^{-2}$, so $L=h^{\frac1{p+1}}$. So we set $\xi=\frac xt$ where $t=h^{\frac1{p+1}}$, then $-\Delta = t^{-2}P$ where
$$ P = t^{-2p}(-c_0\partial_Y^2) + (-\partial_\xi^2 - c_{2p}\xi^{2p}\partial_Y^2) + tP_1+\dots $$
The adiabatic limit analysis works just as well with $t^{-2p}$ as with $t^{-2}$ in the leading term (do it!), and the eigenfunctions of the operator $-\partial_\xi^2 + \omega^2\xi^{2p}$ are still rapidly decaying at infinity, so we obtain
$$ \lambda_m \sim c_0\pi^2h^{-2} + \sum_{j=-2}^\infty d_{jm}h^{\frac j{p+1}}$$
and a similar statement for $u_m$.

This problem with weaker regularity assumptions (and also allowing half-integer $p$ in \eqref{eqn:adiab var deg max}) was analyzed in \cite{FriSol:SDLNS}, by a different method.

\subsubsection*{Several maxima} 
If the height function $a$ has several isolated maxima then each one will contribute quasimodes. For instance, consider the case of two maxima at $x=x_1$ and $x=x_2$, with $a_i=a(x_i)$, and  let  $\lambda_k(h)$ be the $k$th eigenvalue of $\Omega_h$
for fixed $k\in\N$. 
If $a_1>a_2$ then the leading term of the quasi-eigenvalue constructed at $a_1$ is smaller than the one at $a_2$, and therefore the aymptotics of $\lambda_k(h)$ as $h\to0$ is determined from the Taylor series of $a$ around $x_1$, and the eigenfunction concentrates near $x_1$ alone. 
On the other hand, if $a_1=a_2$ then both maxima will generally contribute, and it is interesting to analyze their interaction (so-called tunnelling). A special case of this was analyzed in \cite{Our:DEAFT}, and a detailed study of tunnelling for Schr\"odinger operators with potentials was carried out in \cite{Hel:SCASOA} and \cite{HelSjo:MWSCLI}. 

\subsubsection*{Other approaches}
A different, more operator-theoretic approach to the problem considered here (and more general ones, e.g. higher dimensions) is taken in \cite{LamTeu:EHTDTVCS}, \cite{HaaLamTeu:GQW}, \cite{LamTeu:ALSOFB}, see also the book \cite{Teu:APTQD}.
%%%%%%%%%%%%%%%%%%%%%%%%%%%%%%%%%%%
\section{Adiabatic limit with ends}
\label{sec:adiabatic with ends}
% single domain
%\begin{figure}
%\centering
%\begin{tikzpicture}
%\begin{scope}
%\draw (0,0) -- (0,1);
%\draw[rounded corners=4pt] (0,1) -- (-.2,1) --  (-1.2,0.5) -- (-.2,0) -- (0,0);
%\node at (-.5,-1) {$\Omega_L$};
%\end{scope}
%\begin{scope}[xshift=4cm]
%% \draw[dashed] (0,0) -- (0,.5);
% \draw  (0,0) -- (2,0)  -- (2,.5) -- (0,.5);
% \draw[rounded corners=2pt] (0,.5) -- (-.1,.5) -- (-.6,.25) -- (-.1,0) -- (0,0);
% 
% \draw[dashed,->] (-.5,0) -- (2.4,0) node[right] {$x$};
% \draw[dashed,->] (0,-.5) -- (0,1) node[right] {$y$};
%% \draw (2,-.1) node[below]{$1$} -- (2,0);
%  \draw[decorate,decoration={brace,amplitude=2pt}]
%(-.7,0) -- (-.7,.5) node [near end,left]{$h$};
%
% \node at (1,-1) {$\Omega_h$};
%\end{scope}
%\end{tikzpicture}
%\caption{Example of domains $\Omega_L$ and $\Omega_h$}
%\label{fig:example OmegaL}
%\end{figure}

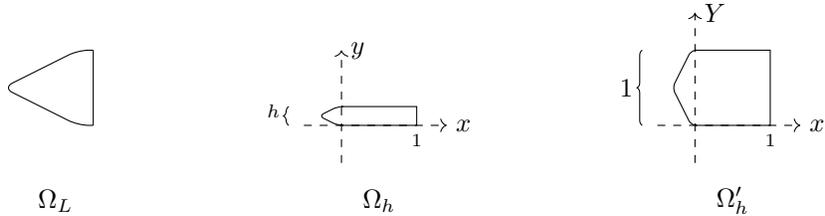
\begin{figure}
\centering
\begin{tikzpicture}
\begin{scope}
\draw (0,0) -- (0,1);
\draw[rounded corners=4pt] (0,1) -- (-.2,1) --  (-1.2,0.5) -- (-.2,0) -- (0,0);
\node at (-.5,-1) {$\Omega_L$};
\end{scope}
\begin{scope}[xshift=3.3cm]
% \draw[dashed] (0,0) -- (0,.5);
 \draw  (0,0) -- (1,0) node[below]{$\scriptstyle 1$} -- (1,.25) -- (0,.25);
 \draw[rounded corners=2pt] (0,.25) -- (-.05,.25) -- (-.3,.125) -- (-.05,0) -- (0,0);
 
 \draw[dashed,->] (-.5,0) -- (1.4,0) node[right] {$x$};
 \draw[dashed,->] (0,-.5) -- (0,1) node[right] {$y$};
% \draw (2,-.1) node[below]{$1$} -- (2,0);
  \draw[decorate,decoration={brace,amplitude=2pt}]
(-.7,0) -- (-.7,.25) node [near end,left]{$\scriptstyle h$};

 \node at (.5,-1) {$\Omega_h$};
\end{scope}
\begin{scope}[xshift=8cm]
% \draw[dashed] (0,0) -- (0,.5);
 \draw  (0,0) -- (1,0) node[below]{$\scriptstyle 1$} -- (1,1) -- (0,1);
 \draw[rounded corners=2pt] (0,1) -- (-.05,1) -- (-.3,.5) -- (-.05,0) -- (0,0);
 
 \draw[dashed,->] (-.5,0) -- (1.4,0) node[right] {$x$};
 \draw[dashed,->] (0,-.5) -- (0,1.5) node[right] {$Y$};
% \draw (2,-.1) node[below]{$1$} -- (2,0);
  \draw[decorate,decoration={brace,amplitude=2pt}]
(-.7,0) -- (-.7,1) node [midway,left]{$1$};

 \node at (.5,-1) {$\Omega_h'$};
\end{scope}

\end{tikzpicture}
\caption{Example of domains $\Omega_L$ and $\Omega_h$, and rescaling after first blow-up}
\label{fig:example OmegaL}
\end{figure}

We consider the following problem, see Figure \ref{fig:example OmegaL} left and center: Let $\Omega_L\subset\R^2$ be a bounded domain contained in the left half plane $x<0$, having $\{0\}\times[0,1]$ as part of its boundary. For $h>0$ consider the domain
\begin{equation}
 \label{eqn:adiab ends Omegah def}
 \Omega_h = h\Omega_L \cup R_h \subset \R^2,\quad R_h = [0,1) \times (0,h)
\end{equation}
i.e. a $1\times h$ rectangle with the \lq end' $\Omega_L$, scaled down by the factor $h$, attached at its left boundary. 
%\footnote{This is not an end in the sense of differential geometry!}  
To simplify notation we assume that $\Omega_L$ is such that the boundary of $\Omega_h$ is smooth, except for the right angles at the right end; however, 
this is irrelevant for the method.

We denote coordinates on $\Omega_h$ by $x,y$.
We will construct quasimodes $(\lambda_h,u_h)$ for the Laplacian $\Delta_h = \partial_x^2+\partial_y^2$ on $\Omega_h$, with Dirichlet boundary conditions, as $h\to 0$.

The central difficulty, and new aspect compared to the adiabatic limit, is the fact that there are two different scalings in the problem:
\begin{itemize}
\item in the rectangular part of $\Omega_h$ only the $y$-direction scales like $h$,
\item  in the left end both $x$- and $y$-directions scale like $h$. 
\end{itemize}
This leads to different ways in which these two parts of $\Omega_h$ influence eigenvalues and eigenfunctions. 
%In the traditional method of matched asymptotic expansions, this is reflected by considering two types of asymptotic expansions, one adapted to one scaling, one to the other.  The geometric point of view that we follow here leads to the same result and makes it a priori clear what the rescaled problems are and how they should be matched.

This is a simple case of a much more general setup arising in contexts such as surgery in global analysis and \lq fat graph' analysis, see Section \ref{subsec:general ends added}. The essential structures, however, already appear in this simple case. An explicit analysis using matched asymptotic expansions was carried out in \cite{GriJer:AEPD}. We will rederive the quasimode expansions in a more conceptual way using the idea of resolutions. 

\subsection{Resolution}
\label{subsec:adiab ends resol}
First, we construct a space on which we may hope the eigenfunctions (and quasimodes) to be smooth. We start with the total space on which these are functions, which is
$$\Omega=\bigcup_{h>0} \Omega_h\times \{h\} \subset\R^3$$
see the left picture in Figure \ref{fig:blow-up}.
Really we want to consider the closure
$\Omegabar$
since we are interested in the behavior of quasimodes as $h\to 0$, compare Remark \ref{rem:functions int}. This set is not a manifold with corners, let alone a d-submanifold of $\R^2\times\R_+$ (compare Footnote \ref{footnote:not dsubman}).
At $y=h=0$ the set $\Omegabar$ has an adiabatic limit type
singularity as in the case of Section \ref{sec:adiab limit const}. In addition, it has a conical singularity (with singular base) at the point $x=y=h=0$.

So we blow up these two submanifolds of $\R^3$ and find the lift (see Definition  \ref{def:resol subsets}) of $\Omegabar$: The blow up of $\{y=h=0\}$ results in the space $M_0$ in the center of Figure 
\ref{fig:blow-up}.  Projective coordinates are $x$, $Y=\frac yh$ and $h$, globally on $M_0$ since $|y|\leq C h$ on $\Omegabar$.\footnote{To make sense of the picture for $M_0$ it may help to note that $M_0$ is  the closure of $\bigcup_{h>0} \Omega_h'\times\{h\}$ where $\Omega_h'=\{(x,Y):\, (x,y)\in\Omega_h, Y=\frac yh\}$ is depicted on the right in Figure \ref{fig:example OmegaL}.} 
The bottom face of $M_0$ is $h=0$, and the preimage of the point $x=y=h=0$ is the bold face line $x=h=0$ in $M_0$. So we blow up this line and define $M$ to be the lift of $M_0$.\footnote{%
You may wonder if we would have obtained a different space if had first blown up the point $x=y=h=0$ and then the (lift of the) line $y=h=0$. It can easily be checked that this results in the same space $M$ -- more precisely that the identity on the interiors of this space and of $M$ extends to the boundary as a diffeomorphism. This also follows from the fact that $\{x=y=h=0\}\subset\{y=h=0\}$ and a general theorem about commuting blow-ups, see \cite{Mel:DAMWC}.} 

As always we will use $x,y,h$ to denote the pull-backs of the coordinate functions $x,y,h$ on $\Omegabar$ to $M$. Projective coordinate systems for the second blow-up give coordinates $h, X=\frac xh, Y$ on $M\setminus A$ and $\frac hx, x, Y$ in a neighborhood of  $A$.
%However, the point of the procedure below is that they will only be used near the boundary hypersurfaces $A,S$.
%
%%%%%%%%%%%%%%%%%%%%% figure of blow-ups of \Omegabar
\begin{figure}
\centering
\begin{tikzpicture}[x=1.8cm,y={(-1cm,-1cm)}, z={(0cm,2cm)}]
%\Omegabar
\begin{scope}
  \draw (0,0,1) -- (1,0,1)  -- (1,.5,1) -- (0,.5,1);
  \draw[rounded corners=2pt] (0,.5,1) -- (-.1,.5,1) -- (-.6,.25,1) -- (-.1,0,1) -- (0,0,1);
 \draw[dotted] (0,0,1) -- (0,.5,1);
 % line at h=0
 \draw[very thick] (0,0,0) -- (1,0,0); 
 %verticals:
% \draw (0,0,0) -- (0,0,1);
 \draw[dashed] (0,0,0) -- (0,0.5,1);
 \draw[dashed] (0,0,0) -- (-.58,.25,1);
 \draw (1,0,0) -- (1,0,1);
 \draw (1,0,0) -- (1,.5,1);

 \draw (0.2,-0.5,1) node {$\Omegabar$};
 
 \draw[dashed,->] (0,0,0) -- (1.2,0,0) node[below]{$x$};
 \draw[dashed,->] (0,0,0) -- (0,0,1.2) node[left]{$h$};
 \draw[dashed,->] (0,0,0) -- (0,.5,0) node[left]{$y$};
 
\end{scope}
% end \Omegabar

 \draw (1.4,0,0.5) node {$\leftarrow$};

% M0
 
%%%%%%%%%%%%%%%%%%%%%%%%%%% second picture 
\begin{scope}[xshift=4.5cm]%,yshift=.3cm]

% Omegah 
  \draw (0,0,1) -- (1,0,1)  -- (1,.5,1) -- (0,.5,1);
  \draw[rounded corners=2pt] (0,.5,1) -- (-.1,.5,1) -- (-.6,.25,1) -- (-.1,0,1) -- (0,0,1);
 \draw[dotted] (0,0,1) -- (0,.5,1);
 \draw (0,0,0) -- (1,0,0)  -- (1,.5,0) -- (0,.5,0) -- (0,0,0);%bottom face
 \draw[very thick] (0,0,0) -- (0,0.5,0); % line $h=x=0$ to be blown up
 %verticals:
% \draw (0,0,0) -- (0,0,1);
 \draw[dashed] (0,0.5,0) -- (0,0.5,1);
 \draw[dashed] (0,0.25,0) -- (-.58,.25,1);
 \draw (1,0,0) -- (1,0,1);
 \draw (1,0.5,0) -- (1,.5,1);
  \draw (0.2,-0.5,1) node {$M_0$};

 \draw[dashed,->] (0,0,0) -- (1.2,0,0) node[below]{$x$};
 \draw[dashed,->] (0,0,0) -- (0,0,1.2) node[left]{$h$};
 \draw[dashed,->] (0,0,0) -- (0,.8,0) node[left]{$Y$};

\end{scope}
% end M0

% M 
 \draw (4,0,.5) node {$\leftarrow$};

\begin{scope}[xshift=9.5cm]%,yshift=.3cm]
% Omegah 
  \draw (0,0,1) -- (1,0,1)  -- (1,.5,1) -- (0,.5,1);
  \draw[rounded corners=2pt] (0,.5,1) -- (-.1,.5,1) -- (-.6,.25,1) -- (-.1,0,1) -- (0,0,1);  % top face
 \draw[dotted] (0,0,1) -- (0,.5,1);

  \draw (0.3,0,0) -- (1,0,0)  -- (1,.5,0) -- (0.3,.5,0) -- (0.3,0,0);%bottom face
% new front face:
% drawing circular arcs in 3d coords is not well supported in tikz
% therefore need to convert to standard y coordinate first
% alternative would be to use package tikz-3dplot, but it doesn't look great either 
 \draw[y={(0cm,2cm)}] (.3,0) arc (0:90:.3);
 \draw[y={(0cm,2cm)}] (0.02,-.25) arc (0:90:.3);

% \draw (.3,.3,0) arc (0:90:.3);
 \draw[rounded corners=2pt] (0,.5,.3) -- (-.1,.5,.3) -- (-.6,.25,.3) -- (-.1,0,.3) -- (0,0,.3);
 \draw[dotted] (0,0,.3) -- (0,.5,.3);
 %verticals:
 \draw[dashed] (0,0,0.3) -- (0,0,1);
 \draw[dashed] (0,0.5,0.3) -- (0,0.5,1);
 \draw[dashed] (-.58,0.25,0.3) -- (-.58,.25,1);
 \draw (1,0,0) -- (1,0,1);
 \draw (1,0.5,0) -- (1,.5,1);
 % labels
 \draw (.6,.25,0) node{$A$};
 \draw (0.2,0.3,0.25) node {$S$};
 
 \draw (1.25,0,.5) node{$\dDir M$};
 %\draw (.3,.8,0) node {$S\cap A$};
 \draw (0.2,-0.5,1) node {$M$};
 % fibration of A
 \draw (.3,1.5,0) node[left]{$B$}-- (1,1.5,0);
 \draw[->] (.65,1,0)  -- (.65,1.2,0) node[right,xshift=.1cm]{$\pi_a$};
 \foreach \x in {.4,.5,.6,.7,.8,.9}
 {
 \draw[dotted] (\x,0,0) -- (\x,.5,0);
 }
\end{scope}

\end{tikzpicture}

 \caption{Total space and its resolution for adiabatic limit with ends, with fibration of the adiabatic face $A$; solid lines are codimension 2 corners of $M$, dashed or dotted lines are not}
\label{fig:blow-up}
\end{figure}
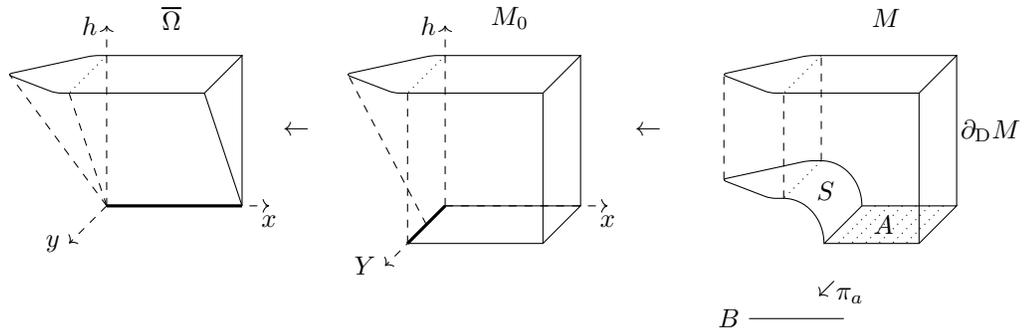
\begin{figure}
\centering
\begin{tikzpicture}[x=1.5cm,y=1.5cm]
   \draw (0,0) -- (1,0)  -- (1,1) -- (0,1);% S
   \draw[rounded corners=4pt] (0,0) -- (-.2,0) -- (-1,.5) -- (-.2,1) -- (0,1); %S
   \draw (1,0) -- (2,0) -- (2,1) -- (1,1); % A
   \draw (.2,.5) node{$S$};
   \draw (1.5,.5) node{$A$};
   %Y axis
   \draw[->] (-2,0) node[left]{$0$}-- (-2,1) node[left]{$1$ }node[right]{$Y$};
   \draw (-2.05,0) -- (-1.95,0); 
%   \draw[fill] (-2,0) circle(1pt);
   % X axis
   \draw[->] (-1,-.5) node[above right]{$X$} -- (1,-.5) node[below left]{$\infty$};
   \draw (0,-.5) -- (0,-.54) node[below]{$0$};
  % x axis
   \draw[->] (1,-.5) node[above right]{$x$} node[below right]{$0$}-- (2,-.5) node[below]{$1$};
   \draw[dashed] (1,0) -- (1,-.7);
\end{tikzpicture}
 \caption{\lq Flattened' picture of $h=0$ boundary of resolved total space $M$, with coordinates for each face}
 \label{fig:flat M}
\end{figure}
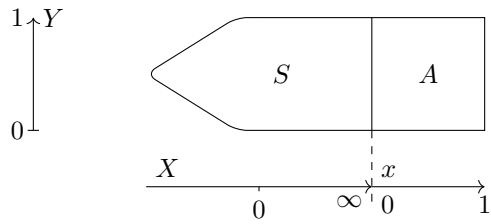
The space $M$ has two types of boundary hypersurfaces:
\begin{itemize}
 \item The \lq Dirichlet boundary' $\dDir M$, which corresponds to the boundary of $\Omega_h$. This is the union of the two \lq vertical' faces in the right picture of Figure \ref{fig:blow-up}:
 $$ \dDir M = \overline{\beta^{-1}\left(\bigcup_{h>0}\partial\Omega_h \times\{h\}\right)}$$
where $\beta:M\to \Omegabar$ is the total blow-down map.
 \item The boundary at $h=0$,
 $$ \partial_0 M = S \cup A $$
where $A$ and $S$ are the front faces of the two blow-ups, which meet in the corner $S\cap A$.%
 \footnote{$A$ is for adiabatic and $S$ is for surgery, see Section \ref{subsec:general ends added} for an explanation.} 
\end{itemize}
Our interest lies in the behavior of quasimodes at $A$ and $S$. All functions will be smooth at the Dirichlet boundary. 

The faces $A$ and $S$ are rescaled limits of $\Omega_h$, see the discussion at the end of Section \ref{subsec:resol-ex}. The adiabatic face $A$ is naturally a rectangle
$$ A \equiv [0,1] \times [0,1] \text{ with coordinates $x$ and $Y=\frac yh$.}$$
 It is the limit as $h\to0$ of $\{(x,\frac yh):\, (x,y)\in 
 \overline R_h\}$
 -- this is precisely what the blow-up means, in terms of projective coordinates.
The Laplacian in these coordinates is
$$ \Delta = h^{-2}\partial_Y^2 + \partial_x^2 .$$
Thus, we have an adiabatic problem, 
with base $B=[0,1]_x$ and fibre $F=[0,1]_Y$ and $P_F=-\partial_Y^2$, $P_B = -\partial_x^2$. The corresponding
projection is
\begin{equation}
\label{eqn:A face projection} 
 \pi_A: U_A\to U_B,\quad (x,Y,h) \mapsto (x,h)
\end{equation}
where $U_B=B\times[0,\eps)$ for some $\eps>0$ and $U_A$ is a neighborhood of $A$.
There is a difference to the setup in Section \ref{sec:adiab limit const} in that $h$ is not a defining function for $A$. This leads to various issues below.

The interior of the surgery face $S$ can be identified with the plane domain $\Omega^{\infty}$ obtained by taking $h^{-1}\Omega_h$ and letting $h\to 0$ (again, by definition of the blowup):
\begin{equation}
\label{eqn:surgery face} 
 \interior S \equiv \Omega^\infty:=\Omega_L \cup \left( [0,\infty) \times (0,1) \right) \text{ with coordinates $X=\frac xh$ and $Y=\frac yh$,}
\end{equation}
 and the Laplacian is
$$ \Delta = h^{-2}(\partial_X^2+\partial_Y^2) .$$
The corner $S\cap A$ is the interval $[0,1]$ and corresponds to $x=0$ in $A$ and to $X=\infty$ in $S$.
Coordinates near the corner are $x$, defining $S$ locally, and $\frac{h}x = X^{-1}$, defining $A$ locally and even globally.

Note that the face $A$ carries naturally a non-trivial fibration, compare Remark \ref{rem:adiab const fibres}, but the face $S$ does not: locally near any point of $S$ no direction is distinguished.

\subsection{Solution}
The construction of quasimodes builds on the construction for the adiabatic limit in Section \ref{sec:adiab limit const}. The presence of the extra scale, i.e. the left end of $\Omega_h$, leads to a number of new features.

To emphasize the relation with previous sections and motivated by the considerations above we will use the notation
\begin{equation}
 \label{eqn:adiab ends notation operators}
\begin{gathered}
P = -\Delta = -\partial_x^2 - \partial_y^2 \\
P_F = -\partial_Y^2,\ P_B = -\partial_x^2 ,\ P_S = -\partial_X^2-\partial_Y^2
\end{gathered}
\end{equation}

\subsubsection{A priori step: Fixing the vertical mode.}
Since an adiabatic limit is involved, we fix a priori
\begin{align*}
 \lambda_{-2} &= \text{a simple eigenvalue  of $P_F$ on $[0,1]$, with Dirichlet boundary conditions}\\
 \psi &= \text{an $L^2$-normalized corresponding eigenfunction}.
\end{align*}
Here we take the lowest fibre eigenvalue%
\footnote{One could also consider higher fibre modes, but this would change the analysis at $S$, see also Remark \ref{rem:first vertical mode}.}
 $$\lambda_{-2} = \pi^2,\quad \psi(Y) = \sqrt2 \sin\pi Y$$
  We will seek (quasi-)eigenvalues of $P$ of the form $\lambda(h)\in h^{-2}\lambda_{-2}+\Cinf(\R_+)$.

\subsubsection{Function spaces, leading parts and model operators}
We want to define spaces $\calE(M)$ and $\calR(M)$ which will contain the eigenfunctions/quasimodes and remainders in the construction, respectively.

Our resolution was chosen so that eigenfunctions have a chance of being smooth on $M$, so $\calE(M)\subset\Cinf(M)$. Since $\frac hx$ is a defining function for $A$, functions $u\in\Cinf(M)$ have an expansion at $A$
$$ u \sim \sum_{j=0}^\infty \left(\frac hx\right)^j \utilde_j(x,Y),\quad \utilde_j\in\Cinf(A).$$
In the sequel it will be convenient\footnote{In order to have $Pu\sim\sum\limits_j h^j Pu_j$. 
But note that $h$ is not a defining function of $A$.} to write this as
\begin{equation}
 \label{eqn:A expansion}
 u \sim \sum_{j=0}^\infty h^j u_j(x,Y)
\end{equation}
where $u_j = x^{-j}\utilde_j$. Note that $u_j$ may be not smooth at $x=0$, i.e. at $S\cap A$, even though $u\in \Cinf(M)$.
We posit that quasimodes satisfy the stronger condition that $u_j$ be smooth on $A$ (including $S\cap A$) and define
\begin{equation}
\label{eqn: triangular condition} 
 \Cinftr(M) = \{u\in\Cinf(M):\ u_j\in\Cinf(A)\ \text{ in the expansion \eqref{eqn:A expansion}}\}
\end{equation}
See Remark \ref{rem:why triangular} below for an explanation why we expect quasimodes to satisfy this condition. 
This can be reformulated as a \lq triangular' condition on the indices in  the expansion  at the corner $S\cap A$:
\begin{equation}
 \label{eqn:triangular condition}
\begin{gathered}
\text{If }u\in\Cinf(M),\ u\sim\sum\limits_{j,l=0}^\infty a_{jl}(Y) \left(\frac hx\right)^j x^l\quad\text{ near }S\cap A\\
 \text{ then } u\in\Cinftr(M) \iff \left(a_{jl}\neq0\Rightarrow l\geq j\right).
\end{gathered}
\end{equation}

In addition, quasimodes should vanish at the Dirichlet boundary $\dDir M$. As before, we indicate this by the index $D$ in the function spaces. 
For functions on the faces $A$, $S$, $S\cap A$ we use a similar notation. For example, $\CinfD(A)$ is the space of smooth functions on $A$ vanishing on the Dirichlet boundary of $A$, which consists of the three sides $x=1$, $Y=0$, $Y=1$. %but not of the side face $S\cap A=\{x=0\}$ of $A$.

\begin{definition} \label{def:adiab ends qm space}
 The {\bf space of quasimodes} for the adiabatic limit with ends  is defined as
 $$\calE(M) = \CinfDtr(M) $$
 i.e. smooth functions on $M$ satisfying Dirichlet boundary conditions and the triangular condition explained above.
The {\bf leading parts} of $u\in\calE(M)$ are defined as
$$ u_S := u_{|S},\quad u_A := u_{|A}.$$
\end{definition}
What are the restrictions of elements of $\calE(M)$ to $\partial_0 M=S\cup A$? Define
\begin{align}
\label{eqn:def Cinftr S}
\CinfDtr(S) &:= \{u_s\in\CinfD(S):\, u_s=a(Y)+O(X^{-\infty})\text{ as }X\to\infty,\
a\in \CinfD(S\cap A)\}\\
\label{eqn:def E SA}
\calE(\partial_0 M) &:= 
 \{(u_s,u_a):\,  u_s\in\CinfDtr(S),\ u_a\in\CinfD(A),\ u_s=u_a\text{ at }S\cap A\}
\end{align}
Here we use the coordinate $X$ on $S$. Recall that $X^{-1}$ defines the face $S\cap A$ of $S$. 
\begin{lemma}[leading parts of quasimodes, adiabatic limit with ends]
\label{lem:adiab ends qm exact}
If $u\in\calE(M)$ then $(u_S,u_A) \in \calE(\partial_0 M)$. Conversely, given $(u_s,u_a)\in \calE(\partial_0 M)$ there is $u\in\calE(M)$ satisfying
$(u_S,u_A)=(u_s,u_a)$, and $u$ is unique modulo $h\calE(M)$.
\end{lemma}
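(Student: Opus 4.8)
The statement splits into two halves: (1) a restriction map $\calE(M)\to\calE(\partial_0 M)$, $u\mapsto(u_S,u_A)$, is well-defined; and (2) it is surjective with kernel exactly $h\calE(M)$. The plan is to handle (1) by a direct check using the coordinate description of $M$, and (2) by a Borel-type gluing argument, where the triangular condition \eqref{eqn:triangular condition} is precisely what makes the compatibility conditions of Lemma \ref{lem:borel} hold at the corner $S\cap A$.

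First I would prove (1). Let $u\in\calE(M)=\CinfDtr(M)$. Since $u$ is smooth on $M$ and vanishes on $\dDir M$, its restrictions $u_S$ and $u_A$ are smooth on $S$ resp.\ $A$ and vanish on the Dirichlet parts of those faces, so $u_A\in\CinfD(A)$. To see $u_S\in\CinfDtr(S)$ I need the decay statement in \eqref{eqn:def Cinftr S}: near the corner $S\cap A$ write $u\sim\sum_{j,l}a_{jl}(Y)(h/x)^jx^l$; the triangular condition forces $a_{jl}=0$ unless $l\ge j$. Restricting to $S$ means setting the defining function of $S$, namely $x$, to zero — but $x = (h/x)^{-1}\cdot h$, and on $\interior S$ the good coordinate is $X=x/h=(h/x)^{-1}$. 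Rewriting $(h/x)^jx^l = X^{l-j}h^l$ and restricting to $h=0$ kills every term with $l\ge1$, leaving only the $l=0$ (hence $j=0$) term $a_{00}(Y)$; the remaining expansion in $X^{-1}=h/x$ at $S\cap A$ shows $u_S = a_{00}(Y)+O(X^{-\infty})$, with $a_{00}\in\CinfD(S\cap A)$. Finally $u_S$ and $u_A$ agree at $S\cap A$: both equal the restriction of $u$ to that corner, i.e.\ $a_{00}(Y)$. Hence $(u_S,u_A)\in\calE(\partial_0 M)$.

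Next, surjectivity. Given $(u_s,u_a)\in\calE(\partial_0 M)$, I want $u\in\calE(M)$ with $u_{|S}=u_s$, $u_{|A}=u_a$. I would build $u$ by specifying its full Taylor expansion at each boundary hypersurface of $M$ at $h=0$ and invoking Borel's lemma (Lemma \ref{lem:borel}) to realize it, then multiplying by a cutoff so that Dirichlet conditions hold exactly on $\dDir M$. Concretely: at $A$, take the expansion \eqref{eqn:A expansion} with leading term $u_0=u_a$ and all higher $u_j\in\CinfD(A)$ chosen arbitrarily (say zero); at $S$, prescribe the value $u_s$ together with the trivial expansion in the defining function $h$ of $S$ in the interior of $S$. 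The only place these two prescriptions interact is the corner $S\cap A$, and there the Borel/matching conditions of Lemma \ref{lem:compatibility conditions}–\ref{lem:borel} must be checked. This is where the triangular condition enters: because $u_s\in\CinfDtr(S)$ has an expansion $u_s\sim a(Y)+\sum_{j\ge1}c_j(Y)(h/x)^j$ with $a=u_a|_{S\cap A}$, and because we chose the $A$-expansion to live in the triangular index set $\{l\ge j\}$, the two families of Taylor coefficients at $S\cap A$ are compatible (the off-diagonal coefficients $a_{jl}$ with $l<j$ are forced to vanish on both sides). So Borel's lemma produces $u\in\Cinf(M)$ with these expansions; it automatically lies in $\Cinftr(M)$, and after the cutoff adjustment (which only changes $u$ by something vanishing to infinite order away from $\partial_0 M$, hence in $\calE(M)$) we may assume $u\in\CinfDtr(M)=\calE(M)$ with the prescribed leading parts.

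For the kernel statement: if $u\in\calE(M)$ has $u_S=0$ and $u_A=0$, I must show $u\in h\calE(M)$, i.e.\ $u=h\utilde$ with $\utilde\in\calE(M)$. Since $u_A=0$, the $j=0$ term in \eqref{eqn:A expansion} vanishes, so $u\sim\sum_{j\ge1}h^ju_j$ with $u_j\in\CinfD(A)$; thus $h^{-1}u$ has a smooth expansion at $A$ in $\Cinftr$ form. Since $u_S=0$, $u$ vanishes on $S$; as $S$ and $A$ are the two boundary hypersurfaces of $M$ in $\{h=0\}$, Lemma \ref{lem:divide by bdf} applied with $\calS$ a total boundary defining function for $\{h=0\}$ — noting that on $M$ this total defining function is comparable to $h$ itself (because $\beta^*h$ vanishes simply on $S\cup A$ up to the relation dictated by the blow-ups, which one reads off from the projective coordinates) — lets me write $u = h\utilde$ with $\utilde\in\Cinf(M)$; one then checks $\utilde$ still satisfies the Dirichlet and triangular conditions, so $\utilde\in\calE(M)$. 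Conversely $h\calE(M)\subset\ker$ is immediate since $h$ vanishes on $\partial_0 M$. The main obstacle I anticipate is bookkeeping the relation between $h$ and the defining functions of $S$ and $A$ on the blown-up space $M$ — i.e.\ verifying that $\beta^*h = (\text{bdf of }S)\cdot(\text{bdf of }A)^{?}$ in the right way so that ``vanishing at $S$ and first-order prescribed at $A$'' is exactly ``divisible by $h$'' — and checking, using the projective coordinates $h,X=x/h,Y$ and $h/x,x,Y$ recorded in Section \ref{subsec:adiab ends resol}, that the triangular condition is preserved under division by $h$; everything else is routine Borel-lemma manipulation.
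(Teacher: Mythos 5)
Your handling of the first part (restriction lands in $\calE(\partial_0 M)$) and of the kernel statement (writing $u=h\utilde$ via Lemma \ref{lem:divide by bdf}, using that $h=x\cdot\frac hx$ is a total defining function for $\{S,A\}$, and checking the triangular condition for $\utilde$) is essentially the paper's argument and is fine. The surjectivity step, however, has a genuine gap: the Borel data you prescribe does not satisfy the matching conditions of Lemma \ref{lem:compatibility conditions}. Work in the corner coordinates $x$ (defining $S$) and $\eta=\frac hx$ (defining $A$), and write the $S$-expansion as $u\sim\sum_{l}\alpha_l(\eta,Y)x^l$ and the $A$-expansion as $u\sim\sum_j b_j(x,Y)\eta^j$. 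You take $b_0=u_a$, $b_j=0$ for $j\ge1$, and (however one makes your ``trivial expansion at $S$'' precise near the corner) $\alpha_0$ restricting to $u_s$ and $\alpha_l=0$ for $l\ge1$. Matching requires the coefficient of $x^l$ in $b_0=u_a$, i.e.\ $a_{0l}(Y)=\frac1{l!}\partial_x^lu_a(0,Y)$, to equal the coefficient of $\eta^0$ in $\alpha_l$; with your choice the latter is $0$ for every $l\ge1$, so the compatibility fails whenever $u_a$ is not constant to infinite order in $x$ at the corner (e.g.\ $u_a=\sin(\pi m x)\sin(\pi Y)$, the case actually needed). The triangular condition is not what is at stake here: it constrains the corner coefficients $a_{jl}$ with $l<j$, whereas the mismatch occurs at $(j,l)=(0,l)$, $l\ge1$, which are perfectly admissible coefficients. (You also misquote \eqref{eqn:def Cinftr S}: $u_s\in\CinfDtr(S)$ means $u_s=a(Y)+O(X^{-\infty})$, so the coefficients $c_j(Y)$, $j\ge1$, in the expansion you wrote are all zero.)

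The repair is exactly the choice made in the paper's proof: prescribe the expansion at $S$ as $u_s+\sum_{l\ge1}a_{0l}(Y)\,x^l$, i.e.\ feed the corner Taylor coefficients of $u_a$ (constant in $\eta$) into the higher-order coefficients at $S$, while keeping the $A$-expansion equal to $u_a$ up to $O(\eta^\infty)$. Then the matching conditions hold because $\partial_\eta a_{0l}=0$ and because $u_s-a_{00}=O(\eta^\infty)$, the Borel Lemma \ref{lem:borel} produces $u$, and all corner coefficients have $j=0$, so the triangular condition \eqref{eqn:triangular condition} holds and $u\in\CinfDtr(M)$; after arranging the Dirichlet condition one obtains $u\in\calE(M)$ with $(u_S,u_A)=(u_s,u_a)$.
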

This could be formulated as existence of a short exact sequence:
\begin{equation}
\label{eqn:short exact seq qm adiab ends} 
 0 \to h\calE(M) \to \calE(M) \to \calE(\partial_0 M) \to 0
\end{equation}
where the left map is inclusion and the right map is restriction.
\begin{proof}
 It is clear that the restrictions of $u\in\calE(M)$ to $S,A$ are smooth and agree at $S\cap A$. Write the expansion of $u$ at the corner as in \eqref{eqn:triangular condition}. The $l=0$ terms give the expansion of $u_S$ at $S\cap A$, i.e. as $\frac hx\to0$. The only such term is $j=0$, so $u_S=a_{00}(Y) + O(\left(\frac hx\right)^\infty)$. From $\frac hx=X^{-1}$ we get $(u_S,u_A)\in\calE(\partial_0 M)$.
 
Given $(u_s,u_a)\in\calE(\partial_0 M)$ one constructs $u\in\calE(M)$ having this boundary data using the Borel Lemma \ref{lem:borel}, as follows.
We write $\eta=X^{-1}$ for the function defining $A$ and suppress the $Y$-coordinate. Write $u_a(x)\sim \sum_{l\geq0} a_{0l} x^l$, $x\to0$.
We choose $u$ having complete expansions
\begin{align*}
 u(x,\eta) &\sim u_s(\eta) + \sum_{l=1}^\infty a_{0l} x^l & \text{ as }x\to 0,\quad \text{ i.e.\ at }S\\
 u(x,\eta) &\sim u_a(x) & \text{ as }\eta\to0,\quad \text{ i.e.\ at }A
\end{align*}
(with error $O(\eta^\infty)$ in the second case). 
Such a $u\in\CinfD(M)$ exists by the Borel Lemma -- the matching conditions at $x=\eta=0$ are satisfied since $\frac{\partial a_{0l}}{\partial\eta}=0$ for all $l$.
Also, the expansions satisfy the triangular condition in \eqref{eqn:triangular condition}, hence $u\in\CinfDtr(M)$.

Finally, we need to show that if $u\in\calE(M)$, $(u_S,u_A)=0$ then $u\in h\calE(M)$. Now $h=\frac hx x$ is a total boundary defining function for $\{S,A\}$, so $u=h\utilde$ for some $\utilde\in\CinfD(M)$ by Lemma \ref{lem:divide by bdf}. In the expansion \eqref{eqn:A expansion} all $u_j$ are smooth and $u_0=u_A=0$, so 
$\utilde=\sum_{j\geq 1} h^{j-1}u_j$ is in $\CinfDtr(M)$.
\end{proof}

The definition of the remainder space combines the triangular condition with the remainder space for the adiabatic limit. First, the choice of 
$\lambda_{-2}$ defines a projection type map related to the projection $\pi:U_A\to U_B$, see \eqref{eqn:A face projection},
$$ \Pi: \Cinf(M)\to\Cinf(U_B),\quad f \mapsto \langle f_{|U_A},\psi\rangle_F\,. $$
Then
\begin{equation}
 \label{eqn:Pi Pi_F ends}
 \Pi \circ (P_F-\lambda_{-2}) = 0
\end{equation}
where this is defined.
%or: (BUT: not enough to define Pi on things satisfying Dir BC
%The kernel of $-\partial_Y^2-\lambda_{-2}$ in $\CinfD(A)$ consists of functions $a(x)\psi(Y)$ where $\psi(Y)=\frac1{\sqrt2}\sin(\pi Y)$, $a\in\CinfD(B)$, hence may be identified with  $\CinfD(B)$, and therefore we can write the projection onto the kernel as
%$$ \Pi_a:\CinfD(A)\to \CinfD(B), \quad u_A\mapsto (\Pi_a u_A)(x):= \langle u_A,\psi\rangle_Y $$
%
%For $u\in\CinfD(M)$ we can also consider $\Pi_a u$ near $A$, which is defined to some order at $A$ if the fibration is extended to that order to the interior.

\begin{definition} \label{def:adiab ends rem space}
 The {\bf remainder space} for the adiabatic limit with ends is defined as
 $$ \calR(M) = \{f\in h^{-2}\Cinftr(M):\ \Pi f\ \text{ is smooth at }B\}.$$
where $B:=B\times\{0\}\subset U_B$.
 The {\bf leading parts} of $f\in\calR(M)$ are
 $$ f_{-2,S} := (h^2 f)_{|S},\quad f_{AB} := 
\begin{pmatrix}
f_{-2,A} \\
\Pi f_{0,A}
\end{pmatrix}
$$
where $f_{-2,A} =  (h^2 f)_{|A}$, $\Pi f_{0,A} =  (\Pi f)_{|B}$.
\end{definition}
Thus, a function $f\in h^{-2}\Cinf(M)$ is in $\calR(M)$ iff it has an expansion $f\sim h^{-2}f_{-2,A}+h^{-1}f_{-1,A} + f_{0,A} + \dots$ at $A$ with $f_{j,A}\in\Cinf(A)$ analogous to \eqref{eqn:A expansion}, and $\Pi f_{-2,A} = \Pi f_{-1,A} = 0$. This defines $f_{-2,A}$ and $f_{0,A}$ in the definition of $f_{AB}$.
Note that $f\in\calR(M)$ implies that
\begin{equation}
 \label{eqn:adiab ends rem cond}
f_{-2,S}\in\Cinftr(S),\quad \Pi f_{-2,S} = 0 \text{ at } S\cap A.
\end{equation}
 The first statement follows as in the proof of Lemma \ref{lem:adiab ends qm exact} and the second from $\Pi (h^2 f)_{A}=0$ and $(h^2f)_A = (h^2f)_S$ at $S\cap A$.

As usual, the remainder space and leading part definitions are justified by the following properties. Recall the definition of the operators $P, P_F, P_B, P_S$ in \eqref{eqn:adiab ends notation operators}.
\begin{leading part and model operator lemma}[adiabatic limit with ends] 
\mbox{}
\begin{enumerate}
 \item[a)]
If $f\in\calR(M)$ then 
$$f\in h\calR(M)\ \text{ if and only if }\ f_{-2,S} = 0, \ f_{AB}=0.$$ 
 \item[b)]
  For $\lambda \in h^{-2}\lambda_{-2} + \Cinf(\R_+)$ we have
$$ P-\lambda: \calE(M) \to \calR(M) $$
and
\begin{align}
\label{eqn:model S ends}
 [(P-\lambda)u]_{-2,S} &= (P-\lambda)_S u_S &\text{where } (P-\lambda)_S&= P_S - \lambda_{-2}\\
\label{eqn:model A ends} 
[(P-\lambda)u]_{AB} &= (P-\lambda)_A u_A &\text{where } (P-\lambda)_A &= 
\begin{pmatrix}
 P_F-\lambda_{-2} \\
 \Pi (P_B - \lambda_0)
\end{pmatrix}
\end{align}
where $\lambda_0$ is the constant term of $\lambda$.

\end{enumerate}
\end{leading part and model operator lemma}
The operators $(P-\lambda)_S$, $(P-\lambda)_A$ are called the {\bf model operators} of $P-\lambda$ at $S$ and at $A$.
There is also a short exact sequence like \eqref{eqn:short exact seq qm adiab ends} for $\calR(M)$, but we need only what is stated as a).
\begin{proof}\mbox{}
\begin{enumerate}
 \item[a)]
 ``$\Rightarrow$'' is obvious. ``$\Leftarrow$'': 
If $f\in\calR(M)$ then $h^2f\in\Cinftr(M)$, and $f_{-2,S}=0$, $f_{AB}=0$ imply $(h^2f)_S=0$, $(h^2f)_A=0$, so Lemma \ref{lem:adiab ends qm exact} gives $h^2f\in h\Cinftr(M)$, so $f=h\ftilde$ with $\ftilde\in h^{-2}\Cinftr(M)$. Furthermore, $f_{AB}=0$ implies
$f_{-2,A}=0$ and $\Pi f_{0,A}=0$, so $\Pi\ftilde$ is smooth at $B$, hence $\ftilde\in\calR(M)$.
\item[b)]
If $u\in\calE(M)$ then  $(P-\lambda)u=h^{-2}(P_S-\lambda_{-2} + O(h^2))u$ near $S$ and $(P-\lambda)u=h^{-2}(P_F-\lambda_{-2})u + (P_B-\lambda_0)u + O(h)$ near $A$. This is clearly in $h^{-2}\Cinftr(M)$, and even in $\calR(M)$ by \eqref{eqn:Pi Pi_F ends}.
The definition of leading parts directly implies \eqref{eqn:model S ends}, \eqref{eqn:model A ends}.
 
\end{enumerate}
\end{proof}

\subsubsection{Analytic input for model operators}
At the face $A$, i.e. for the operators $P_F=-\partial_Y^2$ and $P_B=-\partial_x^2$ on $F=[0,1]_Y$ resp. $B=[0,1]_x$,  with Dirichlet boundary conditions, we have the standard elliptic solvability result, Lemma \ref{lem:orth decomp}.

The solvability properties of the model operator $(P-\lambda)_S$ are of a different nature, essentially since this operator has essential spectrum. 

Recall from \eqref{eqn:surgery face} that the interior of $S$ can be identified with the unbounded domain $\Omega^\infty\subset\R^2$, see Figure \ref{fig:flat M}. This set is the union of a compact set and an infinite strip, hence an example of a space with infinite cylindrical ends, and we can use the standard theory for such spaces. We assume:
\begin{equation}
 \label{eqn:adiab ends non-resonance}
\begin{aligned}
 \text{{\bf Non-resonance assumption: } The resolvent $z\mapsto (P_S-z)^{-1}$} \\
  \text{ of the Laplacian $P_S$ on $\Omega^\infty$ has no pole at $z=\lambda_{-2}=\pi^2$.}
\end{aligned}
\end{equation}
It is well-known that this condition is equivalent to the non-existence of bounded solutions of $(P_S-\lambda_{-2})v=0$, and also to the unique solvability of $(P_S-\lambda_{-2})v=f$ for compactly supported $f$, with bounded $v$, see \cite[Proposition 6.28]{Mel:APSIT}. Also, this condition is satisfied for convex sets $\Omega^\infty$, see \cite[Lemma 7]{GriJer:AEPD}, and holds for generic $\Omega_L$.
\begin{lemma}\label{lem:surgery solving}
Assume $\Omega^\infty\subset\R^2$ satisfies the non-resonance assumption \eqref{eqn:adiab ends non-resonance}.

If $f_s\in\Cinftr(S)$, $\Pi f_s = 0$ at $S\cap A$ then the equation
\begin{equation}
 \label{eqn:S equation}
 (P_S-\lambda_{-2}) v_s = f_s
\end{equation}
has a unique bounded solution $v_s$, and $v_s\in\CinfDtr(S)$.
\end{lemma}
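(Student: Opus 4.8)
The statement is a refinement of the standard Fredholm theory on manifolds with cylindrical ends, so the plan is to first establish existence and uniqueness of a \emph{bounded} solution $v_s$ using the non-resonance assumption, and then upgrade the regularity: smoothness of $v_s$ up to $\dDir S$ and the ``triangular'' decay at $S\cap A$. First I would recall the structure of $\Omega^\infty$: it is a compact piece glued to a half-cylinder $[0,\infty)_X\times[0,1]_Y$, and on the cylindrical end $P_S-\lambda_{-2} = -\partial_X^2-\partial_Y^2-\pi^2$. Separating variables in $Y$ (expanding in the Dirichlet eigenbasis $\sin(k\pi Y)$, $k\geq1$) shows that solutions of $(P_S-\lambda_{-2})v=0$ on the end are spanned by $e^{\pm X\sqrt{(k^2-1)\pi^2}}\sin(k\pi Y)$ for $k\geq2$ and by $\{1,X\}\otimes\sin\pi Y$ for $k=1$. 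The $k\geq2$ indicial roots are nonzero and real, so they contribute exponential growth/decay; the $k=1$ mode is the critical (indicial root $0$) one, and this is exactly where the non-resonance hypothesis \eqref{eqn:adiab ends non-resonance} enters.

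The existence/uniqueness step: by \cite[Proposition 6.28]{Mel:APSIT}, the non-resonance assumption is equivalent to unique bounded solvability of $(P_S-\lambda_{-2})v_s=f_s$ for compactly supported $f_s$. For general $f_s\in\Cinftr(S)$ we do not have compact support, but by definition of $\Cinftr(S)$ the function $f_s$ has a full asymptotic expansion as $X\to\infty$ in nonnegative integer powers of $X^{-1}=\frac hx$ with coefficients in $\CinfD(S\cap A)$, and the hypothesis $\Pi f_s=0$ at $S\cap A$ means the leading coefficient $a(Y)$ of $f_s$ is orthogonal in $L^2([0,1]_Y)$ to $\sin\pi Y$. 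So I would split $f_s = f_s^{(0)} + f_s^{(\mathrm{rem})}$ where $f_s^{(0)}$ is a smooth cutoff of the leading term $a(Y)$ supported on the end and $f_s^{(\mathrm{rem})}$ decays like $X^{-1}$ (and iterate, peeling off $X^{-j}$ terms). On the end one solves $(-\partial_X^2-\partial_Y^2-\pi^2)w = \chi(X)a(Y)$ explicitly mode-by-mode in $Y$: for the $k=1$ component of $a$, which vanishes by hypothesis, there is nothing to solve; for $k\geq2$ the ODE $-\partial_X^2 w_k + (k^2-1)\pi^2 w_k = \chi(X)a_k$ has a bounded (in fact exponentially stabilizing) particular solution. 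Subtracting this parametrix correction reduces to a compactly supported right-hand side whose $k=1$ average need not vanish; but constructing the correction on the $k=1$ mode only requires solving $-\partial_X^2 w_1 = \chi a_1$, producing a bounded $w_1$ because $a_1=0$. Hence after finitely many such steps the residual right-hand side is $O(X^{-\infty})$, and adding a genuinely compactly supported error we invoke \cite[Proposition 6.28]{Mel:APSIT} to solve it. Uniqueness of the bounded solution is precisely the non-resonance statement (no bounded nullvector of $P_S-\lambda_{-2}$).

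Finally the regularity upgrade. Smoothness of $v_s$ on $\interior S$ and up to the Dirichlet boundary is interior/boundary elliptic regularity for $-\Delta-\pi^2$ with Dirichlet data on the piecewise-smooth domain $\Omega^\infty$ (and $v_s=0$ on $\dDir S$ since it solves a Dirichlet problem). The content is the behavior at $S\cap A$, i.e.\ as $X\to\infty$: one must show $v_s = b(Y) + O(X^{-\infty})$ with $b\in\CinfD(S\cap A)$, i.e.\ $v_s$ has a polyhomogeneous expansion at the end with only nonnegative integer powers of $X^{-1}$ and no exponentially growing terms. This follows from the explicit parametrix above: each solved piece was either $O(X^{-\infty})$ (the $k\geq2$ modes, exponentially decaying) or a polynomial-in-$X^{-1}$-type term coming from the $k=1$ mode; boundedness forbids the growing solutions $e^{+cX}$ and the linear-in-$X$ term, leaving exactly a constant ($j=0$) plus a genuine expansion in $X^{-1}$. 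That $v_s\in\Cinftr(S)$ is then the statement that this expansion has coefficients in $\CinfD(S\cap A)$, which is inherited from $f_s\in\Cinftr(S)$ through the explicit ODE solutions. I expect the main obstacle to be the bookkeeping at the critical $k=1$ mode: one must carefully track that the hypothesis $\Pi f_s=0$ at $S\cap A$ propagates through the iteration so that no $\log X$ or linear-in-$X$ terms are generated and the bounded solution is unique; everything else is routine cylindrical-end elliptic theory.
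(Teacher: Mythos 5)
Your overall strategy is the same as the paper's: uniqueness from the absence of bounded null solutions, existence by separating variables in the Dirichlet basis $\sin(k\pi Y)$ on the cylindrical end, reduction to a compactly supported right-hand side, the non-resonance assumption invoked via \cite[Proposition 6.28]{Mel:APSIT}, and the behavior as $X\to\infty$ read off from the explicit ODEs $(-\tfrac{d^2}{dX^2}+\mu_k)v_k=f_k$ with $\mu_k=(k^2-1)\pi^2$; this is exactly the argument the paper sketches, with details delegated to \cite{GriJer:AEPD}.

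There is, however, one misreading of the function spaces, and it is precisely what creates the ``main obstacle'' you worry about at the end. By \eqref{eqn:def Cinftr S} (equivalently, by restricting the triangular condition \eqref{eqn:triangular condition} to $S$, where only the $l=0$, hence $j=0$, terms survive), $\Cinftr(S)$ does \emph{not} mean ``a full asymptotic expansion in powers of $X^{-1}$ with smooth coefficients'': it means $f_s=a(Y)+O(X^{-\infty})$, a single constant-in-$X$ term plus a rapidly vanishing remainder, and the hypothesis $\Pi f_s=0$ at $S\cap A$ says $\langle a,\psi\rangle_F=0$. Consequently your iterative peeling of $X^{-j}$-terms is unnecessary, and in the generality you set up it would genuinely fail: $\Pi f_s=0$ at $S\cap A$ constrains only the $j=0$ coefficient, so nothing rules out linear or logarithmic growth coming from the $k=1$ components of the $X^{-j}$ coefficients with $j\geq1$; moreover the $k\geq2$ modes would leave $X^{-j}$ tails in $v_s$, which is incompatible with the conclusion $v_s\in\CinfDtr(S)$, since by \eqref{eqn:def Cinftr S} that means $v_s=b(Y)+O(X^{-\infty})$ with no $X^{-j}$ terms at all (not merely a polyhomogeneous expansion with nice coefficients). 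With the correct definitions your mode-by-mode analysis closes immediately: for $k\geq2$ the bounded solution stabilizes exponentially to the constant $a_k/\mu_k$, while for $k=1$ the data is already $O(X^{-\infty})$, so boundedness excludes the linear solution and leaves a constant plus a rapidly decaying part. One small residual point: after subtracting these explicit end-solutions the remaining right-hand side is $O(X^{-\infty})$ rather than compactly supported, so either treat that tail mode by mode as well or note that the non-resonant solvability extends to rapidly decaying data; the paper deals with this in the same way through the lemmas of \cite{GriJer:AEPD}.
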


\begin{proof}
Uniqueness holds since $(P_S-\lambda_{-2})v=0$ has no bounded solution. For existence, we first reduce to the case of compactly supported $f_s$, then use the non-resonance assumption to get a bounded solution $v_s$ and then show that $v_s\in\CinfDtr(S)$. 
The first and third step can be done by developing $f_s$ and $v_s$ for each fixed $X>0$ in eigenfunctions 
$\psi_k(Y)=\sqrt2 \sin k\pi Y$ of the \lq vertical\rq\ operator $-\partial_Y^2$ on $[0,1]$ with Dirichlet conditions: 
$f_s(X,Y) = \sum_{k=1}^\infty f_{k}(X) \psi_k(Y)$. Then \eqref{eqn:S equation} is equivalent, in $X>0$, to the ODEs
$(-\frac{d^2}{d X^2} + \mu_k) v_k = f_k$
where  $\mu_k=(k^2-1)\pi^2$, and these can be analyzed explicitly. For example, if $k>1$ and $f_k(X)=0$ for large $X$ then any bounded solution $v_k$ must be exponentially decaying. For details see \cite[Lemma 6 and Lemma 9 (with $p=0$)]{GriJer:AEPD}.
\end{proof}
\begin{remark}
\label{rem:why triangular}
This lemma explains why we expect the \lq triangular' condition on the Taylor series of quasimodes: for compactly supported $f_s$ the solution $v_s$ lies in $\CinfDtr(S)$. This leads to the definition of $\calE(\partial_0 M)$. Then $\calE(M)$ must be defined so that the sequence \eqref{eqn:short exact seq qm adiab ends} is exact.
\end{remark}

\subsubsection{Inductive construction of quasimodes}
\begin{indlist}
 \item[\bf Initial step]
We want to solve
$$ (P-\lambda)u \in h\calR(M),\quad u\in\calE(M).$$
By the leading part and model operator lemma this means
\begin{align}
\label{eqn:adiab ends uS}
 (P_S-\lambda_{-2}) u_S &= 0\\
 \label{eqn:adiab ends uA}
(P_F-\lambda_{-2}) u_A &=0\\
\label{eqn:adiab ends Pi uA}
(P_B-\lambda_0) \Pi u_A &=0 
\end{align}
where we used that $\Pi$ commutes with $P_B=-\partial_x^2$.

Also, $(u_S,u_A)\in\calE(\partial_0 M)$, defined in \eqref{eqn:def E SA}.

First, Lemma \ref{lem:surgery solving} implies $u_S=0$.

Therefore $u_A=0$ at $S\cap A$, hence $u_A$ satisfies Dirichlet boundary conditions at all four sides of the square $A$. Thus we have an adiabatic problem as treated in Section \ref{sec:adiab limit const}, so $\lambda_0$ must be a Dirichlet eigenvalue of $-\partial_x^2$ on $B=[0,1]$, i.e.
$$ \lambda_0 = \pi^2m^2, \quad u_A = u_0:= \phi\otimes\psi,\ \phi(x)= \sqrt2\sin\pi m x $$
for some $m\in\N$. Since $B$ is one-dimensional, $\lambda_0$ is a simple eigenvalue as required in \eqref{eqn:adiab PB simple ev}.

From now on we fix $m$, $\phi$, $\lambda_{-2}=\pi^2$, $\lambda_0$, and $u_0=\phi\otimes\psi$.
 \item[\bf Inductive step]
 \begin{inductivesteplemma}[adiabatic limit with ends]
Let $\lambda_{-2}$, $\lambda_0$ and $u_0$ be chosen as above in the initial step, and let $k\geq1$.
Suppose $\lambda\in h^{-2}\Cinf(\R_+)$, $u\in \calE(M)$ satisfy
 $$(P-\lambda)u\in h^k \calR(M)$$
 and $\lambda= h^{-2}\lambda_{-2}+\lambda_0+O(h)$ and $u_S=0, u_A=u_0$. Then there are $\mu\in\R$, $v\in \calE(M)$ so that
  $$  (P-\tilde\lambda)\tilde u \in h^{k+1}\calR(M)$$
 for $\tilde\lambda=\lambda+h^k\mu$, $\tilde u=u+h^kv.$ The number
 $\mu$ and the restriction $v_S$ are unique, and $v_A$ is unique up to adding constant multiples of $u_0$.
\end{inductivesteplemma}
\begin{proof}
Writing  $(P-\lambda)u=h^kf$, $f\in\calR(M)$ we have
$$ (P-\lambdatilde)\utilde = h^k[f-\mu u + (P-\lambda)v - h^k\mu v]$$ 
This is in $h^{k+1}\calR(M)$ if and only if the term in brackets is in $h\calR(M)$, which by the leading part and model operator lemma is equivalent to
\begin{align}
\label{eqn:adiab ends S inhomog}
 (P_S-\lambda_{-2})v_S &= - f_{-2,S} \\
\label{eqn:adiab ends A inhomog}
 (P_F-\lambda_{-2})v_A &= - f_{-2,A} \\
 \label{eqn:adiab ends Pi A inhomog}
 (P_B-\lambda_0)\Pi v_A &= - \Pi f_{0,A} + \mu\phi
\end{align}
where we have used that $(h^2 u)_S=0$, $(h^2u)_A=0$ and $\Pi u_A = \Pi u_0=\phi$.

We first solve at $S$: We have $f\in\calR(M)$, so by \eqref{eqn:adiab ends rem cond} we can apply Lemma \ref{lem:surgery solving} with $f_s=-f_{-2,S}$ and obtain $v_S\in\CinfDtr(S)$ solving \eqref{eqn:adiab ends S inhomog}. This determines in particular $v_{|S\cap A}$, i.e. the boundary value of $v_A$ at $S\cap A$.

Now at $A$ we need to solve an adiabatic problem, but with an inhomogeneous boundary condition at $S\cap A$. To this end we extend
$v_{|S\cap A}$ to $v'\in\CinfD(A)$. 
Writing $v_A=v'+v''$ we then need to find $v''$ satisfying homogeneous boundary conditions also at $S\cap A$, and solving \eqref{eqn:adiab ends A inhomog}, \eqref{eqn:adiab ends Pi A inhomog} with $f_{-2,A}$ modified to $f_{-2,A} + (P_F-\lambda_{-2})v'$ and
$f_{0,A}$ modified to $f_{0,A} + (P_B-\lambda_0)v'$.

This is an adiabatic problem, so Lemma \ref{lem:adiab analysis} guarantees the existence of a solution $v''$. Note that since $\Pi$ commutes with $P_B$ there is no off-diagonal term in \eqref{eqn:adiab model triangular} (where $P_0=P_B$ in current notation).

The uniqueness follows directly from \eqref{eqn:adiab ends S inhomog}-\eqref{eqn:adiab ends Pi A inhomog}: The difference between two solutions $v$ would satisfy the same equations with $f=0$, so would have to vanish at $S$ and therefore solve the adiabatic problem at $A$ with homogeneous boundary condition at $S\cap A$, for which we have already shown that $\mu$ is unique and $v_A$ is unique up to multiples of $u_0$.
\end{proof}

\end{indlist}

Now by the same arguments as for Theorem \ref{thm:regular quasimodes} we obtain from the initial and inductive steps:
\begin{theorem}[quasimodes for adiabatic limit with ends]
\label{thm:adiab ends}
Consider the  family of domains $\Omega_h$ defined in \eqref{eqn:adiab ends Omegah def}.  Suppose the non-resonance assumption \eqref{eqn:adiab ends non-resonance} is satisfied. Then for each $m\in\N$ there are $\lambda_m\in h^{-2}\Cinf(\R_+)$, $u_m\in\CinfDtr(M)$ satisfying
 $$ (P-\lambda_m)u_m \in h^\infty \Cinf(M)$$
and
\begin{align*}
  \lambda_m & = h^{-2}\pi^2 + m^2\pi^2 + O(h)\\
   u_A&=2\sin m\pi x\,\sin \pi Y\,,\quad u_S = 0
\end{align*}
\end{theorem}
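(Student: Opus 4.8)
The plan is to imitate the proof of Theorem \ref{thm:regular quasimodes}, using the Initial step and the Inductive step lemma established above as the two engines and asymptotic summation (Borel Lemma \ref{lem:borel}) to pass to the limit. First I would run the Initial step: it fixes $\lambda_{-2}=\pi^2$, $\psi=\sqrt2\sin\pi Y$, and — after imposing the simplicity of $\lambda_0$, which is automatic since $B=[0,1]$ is one-dimensional — produces $\lambda_{(0)}\in h^{-2}\lambda_{-2}+\Cinf(\R_+)$ with constant term $\lambda_0=m^2\pi^2$ and $u_{(0)}\in\calE(M)$ with $(u_{(0)})_S=0$ and $(u_{(0)})_A=u_0=\phi\otimes\psi=2\sin m\pi x\,\sin\pi Y$, satisfying $(P-\lambda_{(0)})u_{(0)}\in h\calR(M)$. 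Then I would iterate the Inductive step lemma: given $\lambda_{(k)},u_{(k)}$ with $(P-\lambda_{(k)})u_{(k)}\in h^{k+1}\calR(M)$ and the prescribed leading data, it supplies $\mu_{k+1}\in\R$ and $v_{k+1}\in\calE(M)$ so that $\lambda_{(k+1)}:=\lambda_{(k)}+h^{k+1}\mu_{k+1}$ and $u_{(k+1)}:=u_{(k)}+h^{k+1}v_{k+1}$ push the error into $h^{k+2}\calR(M)$, while keeping $\lambda_{(k+1)}=\lambda_{(k)}+O(h^{k+1})$, $u_{(k+1)}=u_{(k)}+O(h^{k+1})$, and (because $k+1\geq1$) not disturbing the leading data.

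Since the partial sums agree up to $O(h^{k+1})$, asymptotic summation (Borel Lemma \ref{lem:borel}, applied in $h$ to the scalar $\lambda$ and in the $\Cinftr(M)$ sense to $u$) produces $\lambda_m\in h^{-2}\Cinf(\R_+)$ and $u_m\in\CinfDtr(M)$ agreeing with each $\lambda_{(k)},u_{(k)}$ to order $h^{k+1}$; the matching conditions needed for the summation at $A$ hold automatically, as in the proof of Theorem \ref{thm:adiabatic variable}. By construction $(P-\lambda_m)u_m\in h^N\calR(M)$ for every $N$, and since $\calR(M)\subset h^{-2}\Cinf(M)$ this gives $(P-\lambda_m)u_m\in h^\infty\Cinf(M)$. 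The asserted leading terms $\lambda_m=h^{-2}\pi^2+m^2\pi^2+O(h)$, $u_A=2\sin m\pi x\,\sin\pi Y$, $u_S=0$ are exactly the data fixed in the Initial step and preserved throughout. If desired, a uniqueness statement (in Taylor series at $h=0$, up to $u_m\mapsto a(h)u_m$ with $a(0)=1$) follows by the same induction as in Theorem \ref{thm:regular quasimodes}, using the uniqueness of $\mu_{k+1}$ and $v_S$ and of $v_A$ modulo $\R u_0$ in the Inductive step lemma.

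The genuine content has already been discharged in the lemmas, and the obstacle I would stress in the write-up lives there: the coupling between the two model faces in the Inductive step. Solving at $S$ via Lemma \ref{lem:surgery solving} — which needs the non-resonance assumption \eqref{eqn:adiab ends non-resonance} precisely because $P_S-\lambda_{-2}$ has essential spectrum — pins down the boundary value $v_{|S\cap A}$, turning the problem at $A$ into an adiabatic problem with an inhomogeneous Dirichlet condition on the edge $S\cap A$; this is resolved by extending $v_{|S\cap A}$ to $v'\in\CinfD(A)$ and applying Lemma \ref{lem:adiab analysis} to the correction $v''$, noting that $\Pi$ commutes with $P_B=-\partial_x^2$ so the triangular system \eqref{eqn:adiab model triangular} has no off-diagonal term. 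One must also track that all solutions stay in the triangular spaces $\CinfDtr$ — i.e.\ the corner expansions obey $l\geq j$ — which is why $\calE(M)$ was defined with the triangular condition and why the exact sequence \eqref{eqn:short exact seq qm adiab ends} holds. Granted the lemmas, assembling the theorem is routine.
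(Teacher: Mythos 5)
Your proposal is correct and follows essentially the same route as the paper: the paper's own proof of this theorem is exactly "by the same arguments as for Theorem \ref{thm:regular quasimodes}," i.e.\ the a priori choice of $\lambda_{-2},\psi$, the initial step fixing $\lambda_0=m^2\pi^2$ and $u_A=\phi\otimes\psi$, $u_S=0$, iteration of the inductive step lemma, and asymptotic summation via the Borel lemma, with uniqueness handled by the same induction. Your remarks on where the real content sits (Lemma \ref{lem:surgery solving} at $S$, the inhomogeneous adiabatic problem at $A$, and the triangular spaces) accurately reflect the paper's development preceding the theorem.
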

There is also a uniqueness statement similar to the one in Theorem \ref{thm:adiab quasimodes}.
\begin{remark}[Quasimodes vs. modes]
 It is shown in \cite{GriJer:AEPD} that for convex $\Omega_h$ all eigenfunctions are captured by this construction. That is, for each $m\in\N$ there is $h_0>0$ so that for $h<h_0$ the $m$th eigenvalue of $\Omega_h$ is simple, and both eigenvalue and (suitably normalized) eigenfunction  are approximated  by $\lambda_m$, $u_m$ with error $O(h^\infty)$. However, if $\Omega_h$ is not convex then there may be an additional finite number of eigenvalues not captured by this construction. Essentially, these arise from $L^2$-eigenvalues of the Laplacian on $\Omega^\infty$ below the essential spectrum. See \cite{Gri:SGNS} and references there for a detailed discussion.
\end{remark}
\subsection{Explicit formulas}
The inductive step yields a method for finding any number of terms in the expansions of $\lambda_m$ and $u_m$ as $h\to0$ in terms of solutions of the model problems. In \cite{GriJer:AEPD} the next two terms for $\lambda_m$ are computed:
$$ \lambda_m = h^{-2}\pi^2 + m^2\pi^2 (1+ah)^{-2} + O(h^3) $$
where $a>0$ is determined by the scattering theory of $-\Delta$ on $\Omega^\infty$ at the infimum  of the essential spectrum, which equals $\pi^2$.  
More precisely, there is, up to scalar multiples, a unique polynomially bounded solution $v$ of $(\Delta+\pi^2)v=0$ on $\Omega^{\infty}$, and
it has the form
$$ v(X,Y) = (X+a) \sin \pi Y + O (e^{-X}) \text{ as }X\to\infty.$$
This fixes $a$. Another description is $a=\frac12\gamma'(0)$ where $\gamma(s)$ is the scattering phase at frequency $\pi^2+s^2$. 

\subsection{Generalizations}\label{subsec:general ends added}
The structure of $\Omega_h$ may be described as \lq thin cylinder with end attached'. A natural general setup for this structure is obtained by replacing the $Y$-interval $[0,1]$ by a compact Riemannian manifold, of dimension $n-1$, and the end $\Omega_L$ by another compact Riemannian manifold, of dimension $n$, which has an isometric copy of $Y$ as part of its boundary. One may also add another Riemannian manifold as right end. This is studied in global analysis (where it is sometimes called \lq analytic surgery\rq) as a tool to study the glueing behavior of spectral invariants, see  \cite{Has:ASAT}, \cite{HasMazMel:ASAE}, \cite{MazMel:ASEI} for example.
Other degenerations which have been studied by similar methods include conic degeneration \cite{GuiShe:LERHLARTSEAT}, \cite{Row:SGACC}, \cite{She:CDDL} and degeneration to a (fibred) cusp \cite{AlbRocShe:ATRWRMWC}, \cite{AlbRocShe:RHKTDFC}.

Another generalization is to have several thin cylinders meeting in prescribed ways, so that in the limit $h=0$ one obtains a graph-like structure instead of an interval. This is called a \lq fat graph\rq. For example, consider a finite graph embedded in $\R^n$ with straight edges, and let $\Omega_h$ be the set of points of $\R^n$ having distance at most $h$ from this graph.
This was studied in detail in \cite{ExnPos:CSGLTM}, \cite{Gri:SGNS} and \cite{MolVai:SSNTFSDA}, see also \cite{Gri:TTMPGASG} for a discussion and many more references.
The methods in these papers actually yield a stronger result: 
$\lambda_m(h)$ is given by a power series in $h$ which converges for small $h$, plus an exponentially small error term.

%%%%%%%%%%%%%%%%%%%%%%%%%%%%%%%
\section{Summary of the quasimodes constructions}
\label{sec:summary}
We summarize the essential points of the quasimode constructions, continuing the outline given in Section \ref{sec:main steps}.

In the case of a regular perturbation we introduced the iterative setup that allowed us to reduce the quasimode construction to the solution of a model problem (Lemma \ref{lem:orth decomp}). It involves spaces of quasimodes and remainders and notions of leading part. In this case these are simply smooth functions and their restriction to $h=0$.

For the adiabatic limit problem with constant fibre eigenvalue this needs to be refined: the different scaling in fibre and base directions requires a new definition of remainder space and leading part of remainders (Definition \ref{def:adiab limit rem space}). The model operator combines fibre and horizontal operators, and its triangular structure with respect to the decomposition of functions in fibrewise $\lambda_{-2}$ modes and other modes, Equation \eqref{eqn:adiab model triangular}, enables us to solve the model problem.

The adiabatic limit problem with variable fibre eigenvalue can be reduced to the previous case by expanding the fibre eigenvalue (as function on the base) around its maximum and by rescaling the base variable. This rescaling balances the leading non-constant term in the expansion of the eigenvalue with the leading term of the base operator. The rescaling is encoded geometrically by a blow-up of the total space.

The adiabatic limit problem with ends carries the new feature of having two regions with different scaling behavior. Geometrically this corresponds to two boundary hypersurfaces, $A$ and $S$, at $h=0$ in the resolved total space. The model problem at $A$ is the same as for the adiabatic limit with constant fibre eigenvalue. The model problem at  $S$ is a scattering problem, i.e. a spectral problem on a non-compact domain. The properties of the solutions of the scattering problem lead to the triangular condition on the Taylor series at the corner $S\cap A$ in the spaces of quasimodes and remainders. Once this setup is installed the construction proceeds in a straight-forward way as in the other cases.

%%%%%%%%%%%%%%%%%%%%%%%%%%%%%%%

%
%\bibliographystyle{plain}
%\bibliography{dglib,mypapers} 

\def\cprime{$'$} \def\cprime{$'$}

\end{document}